\def\csname opt@stmaryrd.sty\endcsname
\theoremstyle{definition}
\newtheorem{theorem}{Theorem}[section]
\newtheorem{lemma}[theorem]{Lemma}
\newtheorem{proposition}[theorem]{Proposition}
\newtheorem{definition}[theorem]{Definition}
\newtheorem{example}[theorem]{Example}
\newtheorem{remark}[theorem]{Remark}
\def\cC{{\cal C}}
\def\cZ{{\cal Z}}
\def\bfA{{\mathbf A}}
\def\bfC{{\mathbf C}}
\def\bfE{{\mathbf E}}
\def\bfF{{\mathbf F}}
\def\bfT{{\mathbf T}}
\def\bfn{{\mathbf n}}
\newcommand{\timesdots}{\times\dots\times}
\newcommand{\Hom}{\mathrm{Hom}}
\DeclareMathOperator{\Ext}{Ext}
\newcommand{\Mor}{\mathrm{Mor}}
\newcommand{\Alg}{\mathbf{Alg}}
\newcommand{\Id}{\mathrm{id}}
\newcommand{\Ob}{\mathrm{Ob}}
\newcommand{\Op}{\mathrm{op}}
\newcommand{\Der}{\mathrm{Der}}
\newcommand{\catab}{\mathbf{Ab}}
\newcommand{\catset}{\mathbf{Set}}
\newcommand{\catcat}{\mathbf{Cat}}
\newcommand{\catfam}{\mathbf{Fam}}
\newcommand{\catcfam}{\mathbf{CFam}}
\newcommand{\catnat}{\mathbf{Nat}}
\newcommand{\catcnat}{\mathbf{CNat}}
\newcommand{\catccnat}{\mathbf{CCNat}}
\newcommand{\catccc}{\mathbf{CCC}}
\newcommand{\BW}{\mathrm{BW}}
\newcommand{\cattheories}{\mathbf{Law}}
\newcommand{\catfact}[1]{\mathcal{F}#1}
\newcommand{\Ty}{\mathsf{BiMag}}
\newcommand{\ev}[2]{\mathrm{ev}^{#1}_{#2}}
\newcommand{\HQ}{H\!Q}
\newcommand{\varlaw}{\mathbf{L}}
\newcommand{\varccc}{\mathbf{C}}
\newcommand{\varsorts}{S}
\newcommand{\varobj}[1][0]{%
  {\ifthenelse{\equal{#1}{0}}{X}{%
    \ifthenelse{\equal{#1}{1}}{Y}{Z}%
  }}%
}
\newcommand{\varcat}{\mathbf{C}}
\newcommand{\varsign}{\Sigma}
\newcommand{\varsortedmor}{\iota}
\newcommand{\abobjs}{\mathrm{ab}}
\newcommand{\currying}[1]{\lambda #1}
\newcommand{\fstspec}{{}'\!E}
\newcommand{\sndspec}{{}''\!E}
\newcommand{\projection}{\pi}
\newcommand*{\relrelbarsep}{.386ex}
\newcommand*{\relrelbar}{%
  \mathrel{%
    \mathpalette\@relrelbar\relrelbarsep
  }%
}
\newcommand*{\@relrelbar}[2]{%
  \raise#2\hbox to 0pt{$\m@th#1\relbar$\hss}%
  \lower#2\hbox{$\m@th#1\relbar$}%
}
\providecommand*{\rightrightarrowsfill@}{%
  \arrowfill@\relrelbar\relrelbar\rightrightarrows
}
\providecommand*{\leftleftarrowsfill@}{%
  \arrowfill@\leftleftarrows\relrelbar\relrelbar
}
\providecommand*{\xrightrightarrows}[2][]{%
  \ext@arrow 0359\rightrightarrowsfill@{#1}{#2}%
}
\providecommand*{\xleftleftarrows}[2][]{%
  \ext@arrow 3095\leftleftarrowsfill@{#1}{#2}%
}
\title{Cohomology of Small Cartesian Closed Categories}
\author{Mirai Ikebuchi}
\begin{document}

\maketitle

\begin{abstract}
We show the isomorphism between the Quillen cohomology and the Baues-Wirsching cohomology of a cartesian closed category (CCC).
This is an extension of the results of Dwyer-Kan for small categories and Jibladze-Pirashvili for small categories with finite products.
These results implies that The Quillen cohomology of a CCC $\bfC$ coincides with that of $\bfC$ as a category with finite products, and also that of $\bfC$ as a small category.
\end{abstract}

\section{Introduction}
In this paper, we study cohomology of small cartesian closed categories (CCCs).
One of the motivations to study cohomology of categories with some structures is mathematical logic:
In categorical logic, it is known that there are correspondences between categories and formal theories, which mean a set of axioms.
Small categories with finite products and (first-order) equational theories \cite{lawvere63} and between small CCCs and higher-order equational theories, a set of equations between $\lambda$-terms in typed $\lambda$-calculus \cite{lambek88,crole93}.
Therefore, (co)homology of such classes of categories is an invariant of such theories, and some applications of them have been studied \cite{mm16,ikebuchi22, ikebuchi21, i25}.

We focus on two cohomology theories here: Baues--Wirsching cohomology and Quillen (or Andr\'e--Quillen) cohomology.
Baues--Wirsching cohomology \cite{baues85} is a cohomology of a small category $\bfC$ with coefficients in a \emph{natural system} over $\bfC$.
This cohomology is a generalization of Hochschild--Mitchell cohomology with coefficients in a $\bfC$-bimodule \cite{m72} and the cohomology of the classifying space of $\bfC$ with coefficients in a local system.

Quillen cohomology \cite{quillen2006homotopical} is defined for objects in algebraic categories using Quillen's homotopical algebra.
A coefficient module for the Quillen cohomology of $C$ in an algebraic category $\cC$ is an abelian group object in the overcategory $\cC/C$, called a \emph{Beck module} over $C$.

For a set $O$, let $\catcat_O$ be the category of small categories whose object set of objects is $O$ and whose morphisms are functors that map objects identically.
Then, it is known that the category of natural systems over $\bfC$ is equivalent to the category $(\catcat_O/\bfC)_\abobjs$ of Beck modules over $\bfC\in \Ob(\catcat_O)$, and
that there is an isomorphism between Quillen cohomology and Baues--Wirsching cohomology \cite{dk88},
\begin{equation}\label{eqn:catiso}
\HQ^n_{\catcat_O}(\bfC;D)\cong H_\BW^{n+1}(\bfC;D).
\end{equation}

In \cite{jp06}, Jibladze and Pirashvili studied cohomology of small categories with finite products, called \emph{Lawvere theories}.
They first defined the notion of \emph{cartesian natural systems} over a Lawvere theory and showed that the category $\catcnat_\bfC$ of cartesian natural systems over a Lawvere theory $\bfC$ is equivalent to $(\cattheories_\varsorts/\bfC)_\abobjs$ where $\cattheories_\varsorts$ for a set $\varsorts$ is the category of $\varsorts$-\emph{sorted} Lawvere theories.
Here, an $\varsorts$-sorted Lawvere theory is a Lawvere theory such that its objects are the elements of $\varsorts$ and their formal finite products, and a projection $\pi_i : \prod_{j=1}^n X_j \to X_i$ is specified for each finite family $\{X_j\}_{j=1,\dots,n}$ and $i=1,\dots,n$.
Morphisms between $\varsorts$-sorted Lawvere theories are functors that map objects identically and preserve the specified projections.
Then, they showed
\[
\HQ^n_{\cattheories_\varsorts}(\bfC;D) \cong H_\BW^{n+1}(\bfC;D)
\]
for any $n > 0$ and any cartesian natural system $D$ over an $\varsorts$-sorted Lawvere theory $\bfC$.
This result is interesting not only on its own, but also because, combining with \eqref{eqn:catiso}, it yields the isomorphism
\[
\HQ^n_{\cattheories_\varsorts}(\bfC;D) \cong \HQ^n_{\catcat_O}(\bfC;D)
\]
for $n>0$, a cartesian natural system $D$ over an $\varsorts$-sorted Lawvere theory $\bfC$, and $O = \Ob(\bfC)$.

The main result of this paper is that these isomorphisms extend to (small) CCCs.
More precisely, we
\begin{itemize}
	\item define the notion of $\varsorts$-\emph{sorted} CCCs for a set $\varsorts$ and show that the category $\catccc_\varsorts$ of $\varsorts$-sorted CCCs is algebraic (Subsection 3.2),
	\item define the notion of \emph{cartesian closed natural system} over a CCC $\bfC$ and show an equivalence $\catccnat_\bfC \simeq (\catccc_\varsorts/\bfC)_\abobjs$ between the category of cartesian natural systems over $\bfC$ is equivalent to the category of Beck modules over $\bfC$ as an $\varsorts$-sorted CCC (Subsection 5.3),
	\item and show that, for a Beck module $D$,
\[
\HQ^n_{\catccc_\varsorts}(\bfC;D)  \cong H_\BW^{n+1}(\bfC;D)
\] for positive $n$ (Section 7).
\end{itemize}

Because any CCC $\bfC$ can be thought of as an $\varsorts'$-sorted Lawvere theory and cartesian closed natural systems are cartesian natural systems by definition, we have
\begin{align*}
&\HQ^n_{\catccc_\varsorts}(\bfC;D) \cong \HQ^n_{\cattheories_{\varsorts'}}(\bfC; D)\cong \HQ^n_{\catcat_O}(\bfC;D) \cong H_\BW^{n+1}(\bfC;D).
\end{align*}

{\bf Acknowledgments:} The author would like to thank Haynes Miller for fruitful discussions and comments.

\section{Universal algebra}
In this section, we recall some notions in universal algebra. For more details, see \cite{cohn2012universal} for example.

\begin{definition}
Let $\varsorts$ be a set of sorts and $V_X$ be an infinite set of \emph{variable symbols of sort $X$} for each sort $X \in \varsorts$.
Let $V = \coprod_{X \in \varsorts} V_X$.
\begin{itemize}
	\item An $\varsorts$-\emph{sorted signature} is a set $\varsign$ (of \emph{operation symbols}) together with a function $\alpha : \varsign \to \varsorts^* \times \varsorts$.
If $\alpha(f) = (X_1\dots X_n,X)$ for $f \in \varsign$, we write $f : X_1\timesdots X_n \to X$.
	\item A \emph{term of sort} $X$ over $\varsign$ and $V$ is defined inductively as follows. (i) Any variable symbol $x \in V$ of sort $X$ is a term of sort $X$. (ii) If $f : X_1\timesdots X_n \to X$ and $t_1,\dots,t_n$ are terms of sorts $X_1,\dots,X_n$, respectively, then the formal expression $f(t_1,\dots,t_n)$ is a term of sort $X$.
	\item $T_\varsign(V)$ denotes the set of all terms over $\varsign$ and $V$.
	\item a finite list of variables $x_1\dots x_n\in V^*$ is called a \emph{context}. We often write a context as $x_1:X_1,\dots,x_n:X_n$ where $X_i$ is a sort of $x_i$.
	\item For a term $t$ and a context $x_1:X_1,\dots,x_n:X_n$ such that any variable in $t$ is in $\{x_1,\dots,x_n\}$, we call the formal expression $x_1:X_1,\dots,x_n:X_n \vdash t$ a \emph{term-in-context}.
	\item An \emph{equation} is a pair $(t_1,t_2)$ of two terms in $T_\varsign(V)$. An equation is written as $t_1 \approx t_2$.
	\item A pair $(\varsign, E)$ of a signature and a set of equations is called an \emph{($\varsorts$-sorted) equational presentation}.
\end{itemize}
\end{definition}
\begin{example}[Abelian groups]\label{ex:abelian}
Let $\varsorts$ be a singleton set $\{X\}$ and $\varsign$ be $\{0,-,+\}$ with $\alpha(0) = (\epsilon, X)$, $\alpha(-) = (X,X)$, $\alpha(+) = (XX, X)$.
We write $t_1+t_2$ for the term $+(t_1,t_2)$.
The following set $E$ of equations presents the theory of abelian groups:
\[
x \cdot 0 \approx x,\quad x+ (-x) \approx 0, \quad (x_1+ x_2)+ x_2 \approx x_1+ (x_2+ x_3), \quad x_1 + x_2 \approx x_2 + x_1
\]
where $x_1,x_2,x_3 \in V = V_X$.
\end{example}
\begin{example}[Left modules over a monoid]\label{ex:module}
Let $\varsorts' = \{X,Y\}$ and $\varsign' = \{0,-,+,1,\circ,\cdot\}$ with $\alpha'(1) = (\epsilon,Y)$ $\alpha'(\circ) = (YY,Y)$, $\alpha'(\cdot) = (YX,X)$, and $\alpha'$ is defined in the same way as in the previous example for $0,-,+$.
We write $t_1\circ t_2$ for $\circ(t_1,t_2)$ and $t_1\cdot t_2$ for the term $\cdot(t_1,t_2)$.
Then, the following set $E'$ of equations together with the equations in the previous example presents the theory of left modules over a monoid:
\begin{align*}
&1 \circ y \approx y,\quad y \circ 1 \approx y,\quad (y_1 \circ y_2)\circ y_3 \approx y_1 \circ (y_1 \circ y_3),\\
&y \cdot 0 \approx 0,\quad 1 \cdot x \approx x,\quad (y_1 \circ y_2)\cdot x \approx y_1\cdot (y_2\cdot x), \quad y\cdot (x_1 + x_2) \approx y\cdot x_1 + y\cdot x_2
\end{align*}
where $x,x_i \in V_X$ and $y,y_i \in V_Y$.
That is, the equations in the first line say that $\circ$ is the monoid multiplication with the unit $1$, and the equations in the second line are the laws for the scalar multiplication $y \cdot x$.
\end{example}
\begin{definition}
Fix a set of sorts $\varsorts$, a set of variables $V = \coprod_{X \in \varsorts} V_X$, an $\varsorts$-sorted signature $\varsign$.
\begin{itemize}
	\item A $\varsign$-\emph{algebra} is a family $A = \{A_X\}_{X \in \varsorts}$ of sets $A_X$ ($X \in \varsorts$) together with a function $\llbracket f \rrbracket : A_{X_1}\timesdots A_{X_n} \to A_X$ for each operation symbol $f : X_1\timesdots X_n \to X$.
	\item For two $\varsign$-algebras $A,A'$, a morphism between them consists of functions $\phi_X : A_X \to A'_X$ for each $X \in \varsorts$ that commutes with $\llbracket f\rrbracket$ for all $f \in \varsign$.
	\item Let $A$ be a $\varsign$-algebra. Given a term $t \in T_\varsign(V)$ of sort $X$ and a family of functions $v = \{v_X : V_X \to A_X\}_{X \in \varsorts}$, the \emph{interpretation} $\llbracket t \rrbracket_{v}$ is the element of $A_X$ defined inductively as follows. (i) If $t = x_i$ of sort $X_i$, then $\llbracket t \rrbracket_{v} = v_{X_i}(x_i)$. (ii) If $t = f(t_1,\dots,t_n)$, then $\llbracket t \rrbracket_{v} = \llbracket f \rrbracket(\llbracket t_1 \rrbracket_{v},\dots,\llbracket t_n \rrbracket_{v})$.
	\item A $\varsign$-algebra $A$ is said to \emph{satisfy} an equation $t_1 \approx t_2$ if $\llbracket t_1\rrbracket_{v} = \llbracket t_2\rrbracket_{v}$ holds for any $v$.
	\item For a set of equations $E$, the $\varsign$-algebras satisfying all equations in $E$ and morphisms between them form a category denoted by $\Alg(\varsign,E)$.
\end{itemize}
\end{definition}

It is not difficult to show that, for $\varsign$, $E$ defined in Example \ref{ex:abelian}, the category $\Alg(\varsign,E)$ is equivalent to the category of abelian groups $\catab$.
For $\varsign'$, $E'$ defined in Example \ref{ex:module}, the category $\Alg(\varsign',E')$ is equivalent to the category whose objects are pairs $(M,A)$ of a monoid $M$ and a left module $A$ over $M$, and morphisms from $(M,A)$ to $(M',A')$ are pairs $(\phi_M,\phi_A)$ of a monoid homomorphism $\phi_M : M \to M'$ and a group homomorphism $\phi_A : A \to A'$ such that
$\phi_M(y) \cdot \phi_A(x) = \phi_A(y\cdot x)$.

We define the notion that an equation $t \approx s$ can be proved from a set $E$ of equations, written $E \vdash t \approx s$, as follows.
\begin{itemize}
	\item For any $t \approx s \in E$, $E \vdash t \approx s$.
	\item For any term $t$, $E \vdash t\approx t$.
	\item If $E \vdash t \approx s$, then $E \vdash s \approx t$.
	\item If $E \vdash t \approx s$ and $E \vdash s \approx u$, then $E \vdash t \approx u$.
	\item Let $t$, $s$ be terms, $x_1,\dots,x_n$ be the variables that occur in $t$ or $s$ whose sorts are $X_1,\dots,X_n$, and $t_1,\dots,t_n$ be terms of sorts $X_1,\dots,X_n$, respectively.
If $E\vdash t\approx s$, then $E \vdash t[t_1/x_1,\dots,t_n/x_n] \approx s[t_1/x_1,\dots,t_n/x_n]$.
Here, $t[t_1/x_1,\dots,t_n/x_n]$ (resp. $s[t_1/x_1,\dots,t_n/x_n]$) is the term obtained from $t$ (resp. $s$) by replacing each variable $x_i$ with $t_i$.
	\item For any $f \in \varsign$ with $\alpha(f) = (X_1\dots X_n, X)$ and terms $t_1,\dots,t_n$, $s_1,\dots,s_n$ such that $t_i$ and $s_i$ are of sort $X_i$ ($i=1,\dots,n$), if $E \vdash t_i \approx s_i$ for each $i=1,\dots,n$, then $E \vdash f(t_1,\dots,t_n) \approx f(s_1,\dots,s_n)$.
\end{itemize}

We say that $t\approx s$ is a \emph{semantic consequence} of $E$, written $E \models t \approx s$, for any $\varsign$-algebra $A$ satisfying all equations in $E$, $A$ satisfies $t\approx s$.
The following ensures that the two relations $E \vdash t \approx s$ and $E \models t \approx s$ coincide.

\begin{theorem}
$E \vdash t \approx s$ if and only if $E \models t \approx s$.
\end{theorem}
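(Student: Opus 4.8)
The plan is the standard Birkhoff-style soundness/completeness argument, split into the two implications.

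\emph{Soundness} ($E \vdash t \approx s \Rightarrow E \models t \approx s$). I would induct on the derivation of $E \vdash t \approx s$, checking that each of the six inference rules preserves the property ``every $\varsign$-algebra satisfying all equations of $E$ satisfies $t \approx s$.'' The axiom rule and reflexivity are immediate from the definition of satisfaction; symmetry and transitivity come from those properties of equality in each $A_X$; the congruence rule holds because each $\llbracket f\rrbracket$ is a genuine function. The only rule needing preparation is the substitution rule, and for it I would first establish a \emph{substitution lemma}: for any term $u$, any list of distinct variables $y_1,\dots,y_m$ of sorts $Y_1,\dots,Y_m$, any terms $u_1,\dots,u_m$ of those respective sorts, and any valuation $v=\{v_X\}$ into $A$, one has $\llbracket u[u_1/y_1,\dots,u_m/y_m]\rrbracket_v = \llbracket u\rrbracket_{v'}$, where $v'$ agrees with $v$ except that $v'_{Y_j}(y_j)=\llbracket u_j\rrbracket_v$. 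This is a routine structural induction on $u$. Granting it, if $A\models E$ and $A$ satisfies $t\approx s$, then for every $v$ we get $\llbracket t[t_1/x_1,\dots,t_n/x_n]\rrbracket_v = \llbracket t\rrbracket_{v'} = \llbracket s\rrbracket_{v'} = \llbracket s[t_1/x_1,\dots,t_n/x_n]\rrbracket_v$, with $v'_{X_i}(x_i)=\llbracket t_i\rrbracket_v$.

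\emph{Completeness} ($E \models t \approx s \Rightarrow E \vdash t \approx s$). I would construct the \emph{term model} $F$. Put $t\sim s$ iff $t$ and $s$ have the same sort and $E\vdash t\approx s$; the first four rules of $\vdash$ make $\sim$ an equivalence relation on the terms of each sort, and the congruence rule makes it compatible with all operation symbols, so there is a well-defined $\varsign$-algebra $F$ with $F_X=\{[t]\mid t\text{ a term of sort }X\}$ and $\llbracket f\rrbracket([t_1],\dots,[t_n])=[f(t_1,\dots,t_n)]$. (Each $F_X$ is nonempty since $V_X$ is infinite.) Let $q=\{q_X\}$ be the valuation $q_X(x)=[x]$; an easy structural induction gives $\llbracket u\rrbracket_q=[u]$ for every term $u$, and combined with the substitution lemma this yields, for any valuation $\rho$ into $F$ and any term $u$, that $\llbracket u\rrbracket_\rho=[\,u[r_{z_1}/z_1,\dots,r_{z_k}/z_k]\,]$, where $z_1,\dots,z_k$ are the variables of $u$ and $r_{z_j}$ is a chosen representative of $\rho(z_j)$. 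From this and the substitution rule, $F$ satisfies every equation of $E$: for $u\approx w\in E$ and a valuation $\rho$, $\llbracket u\rrbracket_\rho=[u[\vec r/\vec z]]=[w[\vec r/\vec z]]=\llbracket w\rrbracket_\rho$ because $E\vdash u\approx w$. Hence $F\models E$, so if $E\models t\approx s$ then $F$ satisfies $t\approx s$; evaluating at $q$ gives $[t]=\llbracket t\rrbracket_q=\llbracket s\rrbracket_q=[s]$, i.e.\ $E\vdash t\approx s$.

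The work is bookkeeping rather than conceptual. The points that need care are: the substitution rule as stated replaces \emph{all} variables of $t$ and $s$ simultaneously, so the substitution lemma must be phrased to allow replacing a variable by itself; one must check that $\sim$ is genuinely a congruence in the many-sorted setting; and one should observe that, under the convention that valuations are \emph{total} families $\{v_X:V_X\to A_X\}$ with each $V_X$ infinite, any algebra with an empty carrier satisfies every equation vacuously, which is harmless here because the term model $F$ has all carriers nonempty and still witnesses completeness. I expect the substitution lemma and its corollary $\llbracket u\rrbracket_\rho=[u[\vec r/\vec z]]$ to be the step where attention to the sort annotations matters most; everything else is a direct unwinding of definitions.
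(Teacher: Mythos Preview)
The paper does not give a proof of this theorem; it is stated as a standard fact from universal algebra (in the spirit of the reference to Cohn's text) and used as background. Your proposal is the classical Birkhoff soundness/completeness argument adapted to the many-sorted setting, and it is correct as written: the substitution lemma handles soundness of the instantiation rule, and the term algebra $F$ with $F_X = T_\varsign(V)_X/{\sim}$ witnesses completeness. Your remarks about empty carriers are also on point: under the paper's convention that a valuation is a total family $\{v_X : V_X \to A_X\}$ with each $V_X$ infinite, any algebra with some empty $A_X$ admits no valuations and hence satisfies every equation vacuously, so the usual many-sorted pathology does not arise and the term model (which has all carriers nonempty) suffices.
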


\section{Lawvere theories and cartesian closed categories}
In this section, we define the notion of $\varsorts$-sorted cartesian closed categories and show that the category of them is algebraic.

\subsection{Lawvere theories}

\begin{definition}
Let $\varlaw$ be a category with finite products.
A \emph{model} of $\varlaw$ is a product-preserving functor $\varlaw \to \catset$, and a \emph{morphism of models} is a natural transformation.
We write $\Alg\varlaw$ for the category of models of $\varlaw$.
\end{definition}

\begin{definition}
If a category $\varcat$ is equivalent to $\Alg\varlaw$ for some Lawvere theory $\varlaw$, we say that $\varcat$ is \emph{algebraic}.
\end{definition}

Let $\bfn$ be the set $\ab\{1,\dots,n\}$ for $n=0,1,\dots$.
For a set $\varsorts$, let $\catfam_\varsorts^\Op$ be the full subcategory of $\catset/\varsorts$ with objects $f: \bfn \to \varsorts$ for $n=0,1,\dots$.
Note that $\catfam_\varsorts^\Op$ has finite coproducts $f_1 + f_2 : \bfn_1 + \bfn_2 \to \varsorts$ for $f_i : \bfn_i \to \varsorts$ ($i=1,2$), so $\catfam_\varsorts = (\catfam_\varsorts^\Op)^\Op$ has finite products.
We write $\varsorts^*$ for $\Ob(\catfam_\varsorts)$.

Note that any object $f : \bfn \to S$ can be thought of as a string $X_1\dots X_n$ over $S$ for $f(i) = X_i$, and the product of $X_1\dots X_{n_1}$ and $Y_1\dots Y_{n_2}$ is the concatenation $X_1\dots X_{n_1} Y_1\dots Y_{n_2}$.
A morphism $X_1\dots X_n \to Y_1\dots Y_m$ is a function $u : \mathbf{m} \to \mathbf{n}$ such that $X_{u(i)} = Y_i$ for each $i\in \mathbf{m}$.
Also, we can check that $X_1\dots X_n$ is isomorphic to any permutation $X_{\sigma(1)}\dots X_{\sigma(n)}$ where $\sigma : \bfn \to \bfn$ is a bijection.

\begin{definition}
\begin{itemize}
	\item For a set $\varsorts$, an $\varsorts$-\emph{sorted Lawvere theory} is a small category $\varlaw$ that has finite products together with a morphism $\iota : \catfam_\varsorts \to \varlaw$ of Lawvere theories such that the objects of $\varlaw$ are functions $\bfn \to \varsorts$ and $\iota$ is identity on objects.
We often call $\varlaw$ an $\varsorts$-sorted Lawvere theory without mentioning $\iota$.
	\item A morphism between $\varsorts$-sorted Lawvere theories $\iota : \catfam_\varsorts \to \varlaw$, $\iota' : \catfam_\varsorts \to \varlaw'$ is a functor $F : \varlaw \to \varlaw'$ that identity on objects, preserves products, and satisfy $F \circ \iota = \iota'$.
	\item The category of $\varsorts$-sorted Lawvere theories is denoted by $\cattheories_\varsorts$.
\end{itemize}
\end{definition}

In other words, a category $\varlaw$ is an $\varsorts$-sorted Lawvere theory if $\Ob(\varlaw) = \Ob(\catfam_\varsorts)$ and if each projection $\varobj_1\dots \varobj_k \to \varobj_i$ is specified for any objects $\varobj_1,\dots,\varobj_k$.
A functor between $\varsorts$-sorted Lawvere theories $\varlaw \to \varlaw'$ is a morphism of $\varsorts$-sorted Lawvere theories if it is a morphism of Lawvere theories and preserves the specified projections.

The following ensures that, modulo equivalence, $\varsorts$-sorted Lawvere theories are not actually a special case of small category with finite products.
\begin{proposition}\cite{jp06}\label{prop:sortedtheory}
For any small category $\varlaw$ with finite products, there exists a set $\varsorts$ and an $\varsorts$-sorted Lawvere theory $\catfam_\varsorts \to \varlaw^*$ such that $\varlaw$ is equivalent to $\varlaw^*$.
\end{proposition}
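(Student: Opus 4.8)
\noindent\emph{Proof strategy.} The plan is to formally close $\varlaw$ under finite products of its own objects. Take $\varsorts := \Ob(\varlaw)$, so that an object of $\catfam_\varsorts$ is a finite string $X_1\dots X_n$ of objects of $\varlaw$. For each such string fix, once and for all, a product $P(X_1\dots X_n) = \prod_{i=1}^{n} X_i$ in $\varlaw$ with projections $\mathrm{pr}_i \colon P(X_1\dots X_n) \to X_i$; we arrange that the empty string is sent to a fixed terminal object of $\varlaw$ and the one-letter string $X$ is sent to $X$ itself with $\mathrm{pr}_1 = \Id_X$. Now define $\varlaw^*$ by $\Ob(\varlaw^*) := \Ob(\catfam_\varsorts)$ and
\[
\Hom_{\varlaw^*}(X_1\dots X_n,\, Y_1\dots Y_m) := \Hom_{\varlaw}\bigl(P(X_1\dots X_n),\, P(Y_1\dots Y_m)\bigr),
\]
with composition and identities taken from $\varlaw$. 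Since $\varlaw$ is small and $\varsorts$ is a set, $\varlaw^*$ is small. The evident functor $P\colon \varlaw^* \to \varlaw$ (identity on hom-sets, $X_1\dots X_n \mapsto P(X_1\dots X_n)$ on objects) is fully faithful by construction and essentially surjective because $P(X) = X$ for every $X \in \varsorts$; hence it is an equivalence, giving $\varlaw \simeq \varlaw^*$.

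Next I would equip $\varlaw^*$ with the structure of an $\varsorts$-sorted Lawvere theory. Define $\iota\colon \catfam_\varsorts \to \varlaw^*$ to be the identity on objects and to send a morphism $u\colon X_1\dots X_n \to Y_1\dots Y_m$ of $\catfam_\varsorts$ --- a function $u\colon \mathbf{m} \to \bfn$ with $X_{u(j)} = Y_j$ --- to the tuple $\langle \mathrm{pr}_{u(1)}, \dots, \mathrm{pr}_{u(m)} \rangle \colon P(X_1\dots X_n) \to P(Y_1\dots Y_m)$ determined by the universal property of $P(Y_1\dots Y_m)$. Functoriality of $\iota$ (preservation of identities and composition) is a routine computation with projections. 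To make $\iota$ a morphism of Lawvere theories I take the finite products of $\varlaw^*$ to be the $\iota$-images of those of $\catfam_\varsorts$: the product of $X_1\dots X_n$ and $Y_1\dots Y_m$ is the concatenation $X_1\dots X_n Y_1\dots Y_m$ with projections $\iota(p), \iota(q)$ (where $p, q$ are the product projections in $\catfam_\varsorts$), and the empty string is terminal. That these cones really are limiting in $\varlaw^*$ follows, because $P$ is fully faithful and hence reflects limits, from the standard fact that any chosen product of $X_1,\dots,X_n,Y_1,\dots,Y_m$ in $\varlaw$ is canonically a product of $P(X_1\dots X_n)$ and $P(Y_1\dots Y_m)$. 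With products defined this way, $\iota$ preserves finite products by fiat and is identity on objects, so $(\varlaw^*, \iota)$ is an $\varsorts$-sorted Lawvere theory; together with the equivalence $P$ of the previous paragraph this proves the proposition.

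The only point I expect to require care is the coherence bookkeeping: the single chosen family $P(-)$ must simultaneously serve as the hom-sets of $\varlaw^*$, as the values of $\iota$ on the reindexing morphisms coming from $\catfam_\varsorts$, and as the product cones of $\varlaw^*$, and one must check that these three uses are mutually compatible so that $\iota$ comes out strictly product-preserving. This is exactly what the ``reflect limits along the fully faithful $P$'' argument delivers; once it is in place, everything else is a direct verification. There is no genuine obstruction here --- which is consistent with the fact that the conclusion is only an equivalence of categories, not an isomorphism.
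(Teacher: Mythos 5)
Your proposal is correct and follows essentially the same route as the paper: take $\varsorts = \Ob(\varlaw)$, define $\Hom_{\varlaw^*}(X_1\dots X_n, Y_1\dots Y_m)$ via chosen products in $\varlaw$, observe the resulting comparison functor to $\varlaw$ is fully faithful and surjective on objects, and send $u$ in $\catfam_\varsorts$ to the tuple of projections $\ab<\pi_{u(1)},\dots,\pi_{u(m)}>$. You merely spell out the coherence bookkeeping (fixing the family $P(-)$ once and using that fully faithful functors reflect limits) that the paper leaves implicit in the phrase ``evident functor.''
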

\begin{proof}
Take $\varsorts = \Ob(\varlaw)$, $\Ob(\varlaw^*) = \varsorts^*$, and
\[
\Hom_{\varlaw^*}(X_1\dots X_n,Y_1 \dots Y_m) = \Hom_\varlaw\ab(X_1\times \dots \times X_n, Y_1\times \dots \times Y_m).
\]
Since we have an evident functor $\varlaw^* \to \varlaw$ that is full and faithful and surjective on objects, $\varlaw^*$ is equivalent to $\varlaw$.
Define an identity-on-objects functor $F : \catfam_\varsorts \to \varlaw^*$  as $Fu = \ab<\pi_{u(1)},\dots,\pi_{u(m)}>$ for $u : X_1\dots X_n \to Y_1\dots Y_m$ in $\catfam_\varsorts$ where $\pi_{u(i)}$ is the projection $X_1\dots X_n \to X_{u(i)} = Y_i$ in $\varlaw^*$.
Therefore $\varlaw^*$ forms an $\varsorts$-sorted Lawvere theory.
\end{proof}

We have the \emph{forgetful} functor $U_{\cattheories_\varsorts}: \cattheories_\varsorts \to \catset^{\varsorts^* \times \varsorts}$ where $\catset^{\varsorts^*\times \varsorts}$ is the functor category from the discrete category $\varsorts^*\times \varsorts$ to $\catset$ defined as
\begin{align*}
U_{\cattheories_\varsorts}(\iota:\catfam_\varsorts \to \varlaw)(X,Y) &= \Hom_{\varlaw}(X,Y)\\
U_{\cattheories_\varsorts}(f : \varlaw \to \varlaw')_{(X,Y)} &= f|_{\Hom_\varlaw(X,Y)} : \Hom_{\varlaw}(X,Y) \to \Hom_{\varlaw'}(X,Y).
\end{align*}

\begin{proposition}\cite{lawvere63}
$U_{\cattheories_\varsorts}$ has a left adjoint $F_{\cattheories_\varsorts}: \catset^{\varsorts^*\times \varsorts} \to \cattheories_\varsorts$, called the \emph{free functor}.
\end{proposition}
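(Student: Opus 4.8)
The plan is to build $F_{\cattheories_\varsorts}$ explicitly as a term model and then verify the adjunction by exhibiting its unit together with a transpose operation; I will also record the soft reason a left adjoint has to exist. Fix $\Sigma \in \catset^{\varsorts^*\times\varsorts}$. For $X = X_1\dots X_n \in \varsorts^*$ and $Y \in \varsorts$, I regard each element of $\Sigma(X,Y)$ as a formal operation symbol $f : X_1\timesdots X_n \to Y$, so that $\Sigma$ becomes an $\varsorts$-sorted signature in the sense of Section 2. Define $F_{\cattheories_\varsorts}(\Sigma)$ to be the category with $\Ob(F_{\cattheories_\varsorts}(\Sigma)) = \varsorts^*$ whose morphisms $X_1\dots X_n \to Y_1\dots Y_m$ are $m$-tuples $(t_1,\dots,t_m)$ with $t_j$ a term of sort $Y_j$ over $\Sigma$ in the context $x_1:X_1,\dots,x_n:X_n$; I fix canonical names for the context variables, one per sort-position, so that there is no quotient by renaming. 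Composition is simultaneous substitution, and the identity of $X_1\dots X_n$ is $(x_1,\dots,x_n)$. Since substitution of terms is strictly associative and unital, this is a genuine small category; concatenation of strings gives finite products, with projections the evident tuples of context variables, and sending $u : \mathbf{m}\to\bfn$ in $\catfam_\varsorts$ to $(x_{u(1)},\dots,x_{u(m)})$ defines an identity-on-objects, product-preserving functor $\iota : \catfam_\varsorts \to F_{\cattheories_\varsorts}(\Sigma)$, so $F_{\cattheories_\varsorts}(\Sigma)$ is an $\varsorts$-sorted Lawvere theory. Functoriality of $F_{\cattheories_\varsorts}$ in $\Sigma$ is immediate: a map $\Sigma \to \Sigma'$ relabels operation symbols and hence carries terms to terms.

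Next I would set up the universal property. The unit $\eta_\Sigma : \Sigma \to U_{\cattheories_\varsorts}F_{\cattheories_\varsorts}(\Sigma)$ sends $f \in \Sigma(X_1\dots X_n,Y)$ to the morphism $(f(x_1,\dots,x_n)) : X_1\dots X_n \to Y$. Given an $\varsorts$-sorted Lawvere theory $\varlaw$ and a map $\phi : \Sigma \to U_{\cattheories_\varsorts}(\varlaw)$, I define $\bar\phi : F_{\cattheories_\varsorts}(\Sigma) \to \varlaw$ to be the identity on objects and to send $(t_1,\dots,t_m)$ to $\langle\bar\phi(t_1),\dots,\bar\phi(t_m)\rangle$, where for a term $t$ of sort $Y$ in context $x_1:X_1,\dots,x_n:X_n$ the morphism $\bar\phi(t) : X_1\dots X_n \to Y$ of $\varlaw$ is defined by recursion: $\bar\phi(x_i)$ is the specified projection $X_1\dots X_n \to X_i$, and $\bar\phi(f(t_1,\dots,t_k)) = \phi(f)\circ\langle\bar\phi(t_1),\dots,\bar\phi(t_k)\rangle$ (using that $\varlaw$ has finite products). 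The crucial ingredient is a substitution lemma, $\bar\phi(t[s_1/x_1,\dots,s_n/x_n]) = \bar\phi(t)\circ\langle\bar\phi(s_1),\dots,\bar\phi(s_n)\rangle$, which I would prove by a straightforward induction on $t$ using associativity of composition and the product equations of $\varlaw$; it immediately gives functoriality of $\bar\phi$. Then I would check that $\bar\phi$ preserves finite products, that $\bar\phi\circ\iota = \iota_\varlaw$ (both send a $\catfam_\varsorts$-morphism to the pairing of the corresponding specified projections of $\varlaw$), and that $U_{\cattheories_\varsorts}(\bar\phi)\circ\eta_\Sigma = \phi$. For uniqueness, any morphism $\psi$ of $\varsorts$-sorted Lawvere theories out of $F_{\cattheories_\varsorts}(\Sigma)$ with $U_{\cattheories_\varsorts}(\psi)\circ\eta_\Sigma = \phi$ must agree with $\bar\phi$ on the context variables, because it commutes with $\iota$ and hence preserves the specified projections, and on the operation symbols, because it is compatible with $\phi$; by induction on terms it then agrees with $\bar\phi$ on every term, and every morphism of $F_{\cattheories_\varsorts}(\Sigma)$ is a tuple of terms, so $\psi = \bar\phi$. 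This yields the natural bijection
\[
\Hom_{\cattheories_\varsorts}(F_{\cattheories_\varsorts}(\Sigma),\varlaw) \;\cong\; \Hom_{\catset^{\varsorts^*\times\varsorts}}(\Sigma, U_{\cattheories_\varsorts}(\varlaw)),
\]
i.e. the adjunction $F_{\cattheories_\varsorts} \dashv U_{\cattheories_\varsorts}$.

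The step I expect to need the most care is the syntactic bookkeeping: making composition in $F_{\cattheories_\varsorts}(\Sigma)$ well defined and strictly associative and unital (this is why I would fix canonical variable names rather than quotient by $\alpha$-renaming) and proving the substitution lemma cleanly; everything else is a routine dictionary between term syntax and categorical products. If one prefers to avoid the explicit construction, there is a soft argument: $\cattheories_\varsorts$ is the category of algebras for a set-sized, finitary, $(\varsorts^*\times\varsorts)$-sorted algebraic theory — after identifying $\Hom(Z,Y_1\dots Y_m)$ with $\prod_{j}\Hom(Z,Y_j)$, composition becomes a finitary operation and the $\catfam_\varsorts$-projections become constants, subject to the category axioms together with equations expressing the universal property of finite products — so $U_{\cattheories_\varsorts}$ is monadic and in particular has a left adjoint. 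I would nevertheless present the term model, since essentially the same construction reappears when proving that $\catccc_\varsorts$ is algebraic.
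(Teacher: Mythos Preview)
Your proposal is correct; the paper does not supply its own proof here, citing Lawvere instead. Your explicit term-model construction is precisely the $E=\emptyset$ specialisation of the construction the paper sketches in the proof of Proposition~\ref{prop:lawvere-eq}, so in substance you are following the same route the paper uses elsewhere.
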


Any Lawvere theory has an \emph{equational presentation} defined as follows.

\begin{proposition}\cite[Proposition 14.28]{Adamek2010}\label{prop:lawvere-eq}
For any  $\varsorts$-sorted equational presentation $(\varsign,E)$, there is an $\varsorts$-sorted Lawvere theory $\varlaw$ such that $\Alg\varlaw \cong \Alg(\varsign, E)$ and vice versa.
\end{proposition}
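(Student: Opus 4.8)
The plan is to route one direction through the \emph{syntactic} Lawvere theory of the presentation, and to read a presentation off the hom-sets for the other; both constructions are classical (see the cited \cite{Adamek2010}), so I only indicate the steps.

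\medskip
\noindent\emph{From presentations to sorted Lawvere theories.} Given $(\varsign,E)$, I would let $\varlaw$ have $\Ob(\varlaw)=\varsorts^*$ and take $\Hom_\varlaw(X_1\dots X_n,Y_1\dots Y_m)$ to be the set of $m$-tuples $([t_1],\dots,[t_m])$ where $t_j$ is a term-in-context $x_1:X_1,\dots,x_n:X_n\vdash t_j$ of sort $Y_j$ and $[t_j]$ is the class of $t_j$ under the relation $E\vdash t_j\approx t_j'$. Composition is simultaneous substitution, the identity on $X_1\dots X_n$ is $([x_1],\dots,[x_n])$, and both the category axioms and well-definedness on classes are exactly the substitution and congruence clauses in the definition of $E\vdash$. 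Finite products are concatenation of strings with projections the evident tuples of variables, and the terminal object is $\epsilon$; the functor $\iota:\catfam_\varsorts\to\varlaw$ sends $u:X_1\dots X_n\to Y_1\dots Y_m$ to $([x_{u(1)}],\dots,[x_{u(m)}])$, is identity on objects, and preserves products, so $\varlaw$ is an $\varsorts$-sorted Lawvere theory. To get an equivalence $\Alg\varlaw\simeq\Alg(\varsign,E)$ I would send a product-preserving $M:\varlaw\to\catset$ to the $\varsign$-algebra with carriers $A_X=M(X)$ and $\llbracket f\rrbracket=M([f(x_1,\dots,x_n)])$ under the canonical iso $M(X_1\dots X_n)\cong M(X_1)\timesdots M(X_n)$ --- the equations of $E$ hold because the corresponding morphisms of $\varlaw$ are equal --- and conversely send a $\varsign$-algebra $A$ satisfying $E$ to the functor $M_A$ with $M_A(X_1\dots X_n)=A_{X_1}\timesdots A_{X_n}$ and $M_A$ of a tuple of term-classes given by the interpretations $\llbracket t_j\rrbracket$; the latter is well-defined on classes precisely by the soundness (``only if'') half of the preceding completeness theorem. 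One then checks that these two assignments are functorial on morphisms of models resp.\ algebras --- in both settings a morphism is a family of maps of carriers commuting with the structure, and every morphism of $\varlaw$ is a tuple of terms over $\varsign$ --- and are mutually quasi-inverse.

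\medskip
\noindent\emph{From sorted Lawvere theories to presentations.} Given $\iota:\catfam_\varsorts\to\varlaw$, I would take $\varsign$ to contain, for each string $X_1\dots X_n\in\varsorts^*$, each sort $X$, and each $g\in\Hom_\varlaw(X_1\dots X_n,X)$, an operation symbol $\omega_g:X_1\timesdots X_n\to X$ --- this is a set because $\varlaw$ is small --- and let $E$ consist of the projection laws $\omega_{\pi_i}(x_1,\dots,x_n)\approx x_i$ together with the composition laws $\omega_g(\omega_{g_1}(x_1,\dots,x_n),\dots,\omega_{g_k}(x_1,\dots,x_n))\approx\omega_{h}(x_1,\dots,x_n)$ whenever $h$ is the composite in $\varlaw$ of $g:W_1\dots W_k\to X$ with the tuple $(g_1,\dots,g_k):X_1\dots X_n\to W_1\dots W_k$. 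A product-preserving $M:\varlaw\to\catset$ restricts to a $\varsign$-algebra on the carriers $M(X)$ satisfying $E$ by functoriality, and a $\varsign$-algebra $A$ satisfying $E$ extends to a functor with $M(X_1\dots X_n)=A_{X_1}\timesdots A_{X_n}$ sending a morphism-tuple to the product of the interpretations of the associated $\omega$'s; the projection laws force $M$ to preserve products and the composition laws force functoriality. These passages are again mutually inverse up to equivalence of the categories of models.

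\medskip
\noindent\emph{Main obstacle.} There is no conceptually hard step; the content is bookkeeping. The place needing the most care is the ``vice versa'' direction, where one must include \emph{both} the projection and the composition laws so that $\varsign$-algebras satisfying $E$ are exactly the \emph{product-preserving} functors on $\varlaw$ and not arbitrary ones, and one must check that $\varsign$ is genuinely a set (smallness of $\varlaw$) and that the specified projections of the sorted structure are respected. The only input that is not purely formal is soundness of equational logic, used solely to see that $M_A$ above is well-defined on provable-equality classes.
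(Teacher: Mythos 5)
Your construction of the syntactic theory---objects the strings/contexts over $\varsorts$, morphisms the tuples of $E$-provability classes of terms-in-context, composition by substitution, products by concatenation---is exactly the paper's proof, which is given only as a sketch of that one direction. Your additional material (the explicit equivalence $\Alg\varlaw\simeq\Alg(\varsign,E)$ via soundness, and the ``vice versa'' passage from a sorted Lawvere theory to a presentation with projection and composition laws) is correct and standard, filling in what the paper defers to the cited reference.
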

\begin{proof}(Sketch)
Let $(\Sigma,E)$ be an $\varsorts$-sorted equational presentation.
We construct a Lawvere theory $\varlaw$ as follows.
First, the objects of $\varlaw$ are the contexts $x_1:X_1,\dots,x_n:X_n$.
A morphism from $x_1:X_1,\dots,x_n:X_n$ to $y_1:Y_1,\dots,y_m:Y_m$ is an equivalence class of an $m$-tuple of terms in context $x_1:X_1,\dots,x_n:X_n \vdash (t_1,\dots,t_m)$ where $x_1:X_1,\dots,x_n:X_n \vdash (t_1,\dots,t_m)$ and $x_1:X_1,\dots,x_n:X_n \vdash (s_1,\dots,s_m)$ are equivalent if $E \vdash t_i \approx s_i$ for each $i=1,\dots,m$.
We write the equivalence class of $x_1:X_1,\dots,x_n:X_n \vdash (t_1,\dots,t_m)$ as $x_1:X_1,\dots,x_n:X_n \mid (t_1,\dots,t_m)$.

The composition $(y_1:Y_1,\dots,y_m:Y_m \mid (s_1,\dots,s_k))\circ (x_1:X_1,\dots,x_n:X_n \mid (t_1,\dots,t_m))$ is defined as
\[
x_1:X_1,\dots,x_n:X_n \mid (s_1[t_1/y_1,\dots,t_m/y_m],\dots,s_k[t_1/y_1,\dots,t_m/y_m]).
\]
The identity morphism on $x_1:X_1,\dots,x_n:X_n$ is $x_1:X_1,\dots,x_n:X_n \mid (x_1,\dots,x_n)$.
We can check that the product of $x_1:X_1,\dots,x_n:X_n$ and $y_1:Y_1,\dots,y_m:Y_m$ is $x_1:X_1,\dots,x_n:X_n,y_1:Y_1,\dots,y_m:Y_m$ together with projections $x_1:X_1,\dots,x_n:X_n,y_1:Y_1,\dots,y_m:Y_m \mid (x_1,\dots,x_n)$ and $x_1:X_1,\dots,x_n:X_n,y_1:Y_1,\dots,y_m:Y_m \mid (y_1,\dots,y_m)$. The terminal object is the empty context and the unique morphism from a context to the empty context is $x_1:X_1,\dots,x_n:X_n \mid ()$.

We say $\varlaw$ the $\varsorts$-sorted Lawvere theory \emph{presented} by $(\varsign,E)$.
\end{proof}

\begin{proposition}
For a set $O$, let $\catcat_O$ be the category such that $\Ob(\catcat_O)$ consists of small categories whose object set is $O$ and $\Mor(\catcat_O)$ consists of functors that are identity on objects.
Then, $\catcat_O$ is algebraic.
\end{proposition}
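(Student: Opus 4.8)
The plan is to exhibit $\catcat_O$ as the category of algebras of an explicit equational presentation and then invoke Proposition~\ref{prop:lawvere-eq}. I would take the set of sorts to be $\varsorts = O\times O$, so that an algebra assigns a set $A_{(a,b)}$ to each ordered pair $(a,b)\in O\times O$, to be read as the hom-set $\Hom(a,b)$. For the signature $\varsign$ take one nullary symbol $e_a\colon \epsilon\to (a,a)$ for every $a\in O$ (the identity at $a$) together with one binary symbol $m_{a,b,c}\colon (a,b)\times (b,c)\to (a,c)$ for every triple $(a,b,c)\in O^3$, where $m_{a,b,c}(f,g)$ stands for the composite ``$g$ after $f$''. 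For the set of equations $E$ take the associativity law $m_{a,c,d}(m_{a,b,c}(f,g),h)\approx m_{a,b,d}(f,m_{b,c,d}(g,h))$ for all $(a,b,c,d)\in O^4$ and the two unit laws $m_{a,b,b}(f,e_b)\approx f$ and $m_{a,a,b}(e_a,f)\approx f$ for all $(a,b)\in O^2$, where in each equation the displayed variables are taken of the sorts forced by the arities involved.

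Next I would check that $\Alg(\varsign,E)\cong\catcat_O$, essentially by unwinding definitions. A $\varsign$-algebra satisfying $E$ is the datum of a family of sets $\{A_{(a,b)}\}_{(a,b)\in O^2}$, an element $\llbracket e_a\rrbracket\in A_{(a,a)}$ for each $a$, and functions $\llbracket m_{a,b,c}\rrbracket\colon A_{(a,b)}\times A_{(b,c)}\to A_{(a,c)}$ subject to exactly the associativity and unit axioms of a category, i.e.\ precisely a small category with object set $O$. A morphism of such algebras is a family $\phi_{(a,b)}\colon A_{(a,b)}\to A'_{(a,b)}$ commuting with all the $\llbracket e_a\rrbracket$ and $\llbracket m_{a,b,c}\rrbracket$, which is exactly a functor that is the identity on objects. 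Hence the two categories are isomorphic.

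Finally, Proposition~\ref{prop:lawvere-eq} produces an $\varsorts$-sorted Lawvere theory $\varlaw$ with $\Alg\varlaw\cong\Alg(\varsign,E)$, and composing these isomorphisms gives $\catcat_O\cong\Alg\varlaw$, so $\catcat_O$ is algebraic.

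I do not expect a genuine obstacle here: the only substantive step is the dictionary between the algebraic and categorical descriptions in the second paragraph, which is routine bookkeeping. The one point to be mildly careful about is that when $O$ is infinite the sort set $O\times O$ and the operation set $\varsign$ are infinite; this is harmless, since the framework of Section~2 and Proposition~\ref{prop:lawvere-eq} permit an arbitrary set of sorts and an arbitrary set of (finitary) operation symbols.
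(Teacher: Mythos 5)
Your proposal is correct and follows essentially the same route as the paper: both take $O\times O$ as the sort set, introduce nullary identity symbols and binary composition symbols, impose the associativity and unit laws as equations, identify the resulting algebras with small categories on $O$ and their morphisms with identity-on-objects functors, and conclude via Proposition~\ref{prop:lawvere-eq}. The only differences are cosmetic (your diagrammatic ordering of the composition operation versus the paper's $\circ$ convention, and your making the appeal to Proposition~\ref{prop:lawvere-eq} explicit where the paper leaves it implicit).
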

\begin{proof}
We construct a set of sorts and an equational presentation $(\varsign^{\catcat_O},E^{\catcat_O})$ as follows.
\begin{itemize}
	\item A sort is a pair $(X, Y)$ of $X,Y \in O$. That is, our set of sorts is $O\times O$.
	\item $\varsign^{\catcat_O}$ consists of operation symbols
\[
{-}\circ_{X,Y,Z}{-} : (Y,Z)\times (X,Y) \to (X,Z) \quad \text{and}\quad \Id_{X} : 1 \to (X, X)
\]
for each $X,Y,Z \in O$.
We often ommit the subscripts and superscripts ${X},{Y},{Z}$ and just write $\circ$, $\Id$.
	\item $E^{\catcat_O} = E^{\catcat_O}_\text{comp} \cup E^{\catcat_O}_\text{id}\cup E^{\catcat_O}_\text{pair}$ where
\begin{itemize}
	\item $E^{\catcat_O}_\text{comp}$ consists of $(x\circ_{X,Y,Z} y) \circ_{W,X,Z} z \approx x\circ_{W,Y,Z} (y\circ_{W,X,Y} z)$ for each $W,X,Y,Z\in O$,
	\item $E^{\catcat_O}_\text{id}$ consists of $x\circ_{X,X,Y} \Id_X \approx x$, $\Id_Y\circ_{X,Y,Y} x \approx x$ for each $X,Y\in O$,
\end{itemize}
\end{itemize}
Then, any algebra $\ab\{A_{(X, Y)}\}$ of $(\Sigma^{\catcat_O},E^{\catcat_O})$ can be seen as a category $\bfC$ such that $\Ob(\bfC)=O$, $\Hom_\bfC(X,Y) = A_{(X,Y)}$, and the composition and identities are given by $\llbracket \circ\rrbracket$, $\llbracket \Id\rrbracket$.
\end{proof}

\begin{proposition}\label{prop:lawvere-alg}
$\cattheories_\varsorts$ is algebraic.
\end{proposition}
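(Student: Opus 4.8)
The plan is to give an explicit equational presentation $(\varsign^{\cattheories_\varsorts},E^{\cattheories_\varsorts})$ whose category of algebras is isomorphic to $\cattheories_\varsorts$, in the spirit of the preceding proposition for $\catcat_O$, and then conclude via Proposition~\ref{prop:lawvere-eq}. The new ingredient compared with $\catcat_O$ is that one must encode the finite products of $\varlaw$ together with the specified structure map $\iota:\catfam_\varsorts\to\varlaw$. I would take the set of sorts to be $\varsorts^*\times\varsorts^*$, the sort $(X,Y)$ standing for $\Hom_\varlaw(X,Y)$, and for all $X,Y,Z\in\varsorts^*$ put into the signature: a composition operation $\circ_{X,Y,Z}:(Y,Z)\times(X,Y)\to(X,Z)$; a constant $\Id_X:1\to(X,X)$; a binary pairing operation $\langle-,-\rangle^Z_{X,Y}:(Z,X)\times(Z,Y)\to(Z,XY)$, where $XY$ denotes concatenation; constants $\pi^1_{X,Y}:1\to(XY,X)$ and $\pi^2_{X,Y}:1\to(XY,Y)$; and a constant $!_X:1\to(X,\epsilon)$ with $\epsilon$ the empty string. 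For $E^{\cattheories_\varsorts}$ I would take the associativity and left/right unit laws for $\circ$ and $\Id$ (indexed over $\varsorts^*$ just as $E^{\catcat_O}$ is indexed over $O$), the identities $\pi^1_{X,Y}\circ\langle f,g\rangle\approx f$, $\pi^2_{X,Y}\circ\langle f,g\rangle\approx g$ and $\langle\pi^1_{X,Y}\circ h,\pi^2_{X,Y}\circ h\rangle\approx h$, and the equation $h\approx{!_X}$ in a variable $h$ of sort $(X,\epsilon)$.

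I would then exhibit mutually inverse functors witnessing $\Alg(\varsign^{\cattheories_\varsorts},E^{\cattheories_\varsorts})\cong\cattheories_\varsorts$. From an algebra $A$ one builds the category $\varlaw_A$ with $\Ob(\varlaw_A)=\varsorts^*$ and $\Hom_{\varlaw_A}(X,Y)=A_{(X,Y)}$, composition and identities given by the interpretations of $\circ$ and $\Id$; the category axioms follow from the corresponding equations, the three pairing equations say exactly that $XY$ with the interpretations of $\pi^1_{X,Y},\pi^2_{X,Y}$ is a binary product of $X$ and $Y$ (they assert that $h\mapsto(\pi^1\circ h,\pi^2\circ h)$ and $\langle-,-\rangle$ are mutually inverse), and the last equation says $\epsilon$ is terminal; hence $\varlaw_A$ has finite products, and one defines $\iota_A:\catfam_\varsorts\to\varlaw_A$ to be the identity on objects and to send a reindexing $u:X_1\cdots X_n\to Y_1\cdots Y_m$ to the tuple of the composites of binary projections that pick out $X_{u(1)},\dots,X_{u(m)}$, which makes $\varlaw_A$ an $\varsorts$-sorted Lawvere theory. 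Conversely an $\varsorts$-sorted Lawvere theory $\iota:\catfam_\varsorts\to\varlaw$ yields an algebra by reading off the hom-sets, composition, identities, and the pairing, projection and terminal maps belonging to the specified products $X\times Y=XY$ and terminal object $\epsilon$. On morphisms, a homomorphism of algebras is an identity-on-objects functor preserving composition, identities, pairings, projections and terminal maps; since a functor preserving the projection constants automatically preserves pairings and finite products (by uniqueness of tupling), and since $\iota$ is determined by its values on the basic projections, this is precisely a morphism of $\varsorts$-sorted Lawvere theories. The two constructions are mutually inverse, so the categories are isomorphic; by Proposition~\ref{prop:lawvere-eq}, $\Alg(\varsign^{\cattheories_\varsorts},E^{\cattheories_\varsorts})\cong\Alg\varlaw'$ for a Lawvere theory $\varlaw'$, and therefore $\cattheories_\varsorts$ is algebraic.

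The point that needs the most care — and the idea that makes the presentation work at all — is the treatment of products: the universal property of a product is not equational on its own, since uniqueness of the tupling map is an implication rather than an equation, so one must introduce $\langle-,-\rangle$ as a genuine operation and verify that the three pairing equations cut out exactly the product diagrams. The companion point is that ``preserving the projection constants'' is the right equational surrogate for the condition $F\circ\iota=\iota'$: this uses that, as recalled before the definition of sorted Lawvere theory, every morphism of $\catfam_\varsorts$ is a reindexing, hence a tuple of composites of projections, so that $\iota$ is pinned down by where it sends the basic projections. Granting these, the remaining work — the category and product laws for $\varlaw_A$, functoriality of $\iota_A$, and checking that the two constructions are mutually inverse — is routine and parallels the $\catcat_O$ case, now indexed over $\varsorts^*$ rather than $O$.
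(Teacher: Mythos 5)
Your proposal is correct and follows essentially the same route as the paper: both present $\cattheories_\varsorts$ by an equational theory on the sort set $\varsorts^*\times\varsorts^*$ that extends the presentation of $\catcat_{\varsorts^*}$ with tupling and projection operations subject to the standard product equations, and then identify algebras of this presentation with $\varsorts$-sorted Lawvere theories. The only (immaterial) difference is that the paper uses $n$-ary tupling and $n$-ary projection constants (with the empty case handling the terminal object implicitly), whereas you use binary pairing, binary projections, and an explicit terminal constant $!_X$; the two presentations are readily seen to be equivalent.
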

\begin{proof}
Our set of sorts is $\varsorts^*\times \varsorts^*$ and define an equational presentation $(\Sigma^{\cattheories_\varsorts},E^{\cattheories_\varsorts})$ as $\Sigma^{\cattheories_\varsorts} = \Sigma^{\catcat_{S^*}} \cup \Sigma^{\cattheories_\varsorts}_{\mathrm{pair}}$ and $E^{\cattheories_\varsorts} = E^{\catcat_{S^*}} \cup E^{\cattheories_{S}}_{\mathrm{pair}}$ where $\Sigma^{\cattheories_\varsorts}_{\mathrm{pair}}$ consists of
\[
\ab<-,\dots,->_{X,Y_1,\dots,Y_m} : (X,Y_1)\timesdots (X,Y_m) \to (X,Y_1\timesdots Y_m)
\]
for each $X,Y_1,\dots,Y_m \in S^*$
and
\[
\pi_i^{X_1,\dots,X_n} : 1 \to (X_1\timesdots X_n,X_i) \quad (i=1,\dots,l)
\]
for each $X_1,\dots,X_n \in S^*$
and $E^{\cattheories_\varsorts}_\mathrm{pair}$ consists of
\[
\pi_i \circ \ab<x_1,\dots,x_n> \approx x_i,\quad \ab<x_1,\dots,x_n> \circ y \approx \ab<x_1\circ y,\dots,x_n\circ y>,\quad \ab<\pi_1,\dots,\pi_n> \approx \Id.
\]

Again, any algebra $\ab\{A_{(X, Y)}\}$ of $(\Sigma^{\cattheories_\varsorts},E^{\cattheories_\varsorts})$ can be seen as a category $\bfA$ and it has morphisms $\left\llbracket \pi_i^{X_1,\dots,X_n}\right\rrbracket : X_1\timesdots X_n \to X_i$ and $\left\llbracket \ab<-,\dots,->_{W,X_1,\dots,X_n}\right\rrbracket(f_1,\dots,f_n) : W \to X_1\timesdots X_n$ for any $f_i : W \to X_i$ ($i=1,\dots,n$) in $\bfA$.
We just write $\pi_i^{X_1,\dots,X_n}$ or $\pi_i$ for $\left\llbracket \pi_i^{X_1,\dots,X_n}\right\rrbracket$ and $\ab<f_1,\dots,f_n>$ for $\left\llbracket \ab<-,\dots,->_{W,X_1,\dots,X_n}\right\rrbracket(f_1,\dots,f_n)$. We show that these morphisms make products.

Let $f_i : {W} \to {X}_i$ ($i=1,\dots,n$) be morphisms in $\bfA$.
If $h : {W} \to {X}_1 \timesdots {X}_n$ in $\bfA$ satisfies $\left\llbracket \pi_i^{X_1,\dots,X_n}\right\rrbracket \circ h = f_i$ for any $i=1,\dots,n$, then $h = \ab<f_1,\dots,f_n>$ since
\[
h = \ab<\pi_1,\dots,\pi_n>\circ h = \ab<\pi_1\circ h,\dots,\pi_n\circ h> = \ab<f_1,\dots,f_n>
\]
where each equality holds by the equations in $E^{\cattheories_\varsorts}$.

Also, it is not difficult to see that a morphism of $\Alg(\varsign^{\cattheories_\varsorts}, E^{\cattheories_\varsorts})$ corresponds to a product-preserving functor between $\varsorts$-sorted Lawvere theories.
\end{proof}

\subsection{$\varsorts$-sorted cartesian closed categories}

Let $\varsorts$ be a set.
We introduce $\varsorts$-\emph{sorted} cartesian closed categories ($\varsorts$-sorted CCCs).
\begin{definition}
We define a set $\Ty_\varsorts$ inductively as follows:
\begin{itemize}
	\item $X \in \Ty_\varsorts$ for any $X \in \varsorts$,
	\item $1 \in \Ty_\varsorts$,
	\item if $X,Y \in \Ty_\varsorts$, then $X\times Y \in \Ty_\varsorts$,
	\item if $X,Y \in \Ty_\varsorts$, then $Y^X \in \Ty_\varsorts$,
\end{itemize}
where $1$, $X\times Y$, $Y^X$ are formal expressions.
We call $\Ty_\varsorts$ the \emph{free pointed bi-magma} generated by $\varsorts$.
\end{definition}

\begin{definition}
Let $\varsign$ be a functor from the discrete category $\Ty_\varsorts \times \Ty_\varsorts$ to $\catset$, i.e., $\varsign$ is a family of sets indexed by pairs of two elements in $\Ty_\varsorts$.
We define the category $F_{\catccc_\varsorts}\varsign$ as follows.
$F_{\catccc_\varsorts}\varsign$ has an object $\varobj$ for each $\varobj\in \Ty_\varsorts$ and morphisms
\begin{itemize}
	\item $f : \varobj \to \varobj[1]$ for each $f \in \varsign(\varobj,\varobj[1])$,
	\item $\Id_{\varobj} : \varobj \to \varobj$ for each object $\varobj$,
	\item $!_{\varobj} : \varobj \to 1$ for each object $\varobj$,
	\item $\projection_i : \varobj_1 \times \varobj_2 \to \varobj_i$ for each $i=1,2$ and objects $\varobj_1,\varobj_2$, 
	\item $\ev{\varobj}{\varobj[1]} : \varobj[1]^{\varobj}\times \varobj \to \varobj[1] $ for each pair of objects $\varobj,\varobj[1]$,
	\item $g \circ f : \varobj \to \varobj[2]$ if $f : \varobj \to \varobj[1] $ and $g : \varobj[1] \to \varobj[2]$ are morphisms,
	\item $\ab< f_1,f_2> : \varobj \to \varobj_1 \times \varobj_2$ if $f_i : \varobj \to \varobj_i$ ($i=1,2$) are morphisms,
	\item $\currying f : \varobj \to \varobj[2]^{\varobj[1]}$ if $f : \varobj \times \varobj[1] \to \varobj[2]$ is a morphism,
\end{itemize}
and $\Id_{\varobj}$ and $g \circ f$ satisfy the laws of identity and composition, $\projection_i$ and $\ab<f_1,f_2>$ satisfy, for any $f_i : \varobj \to \varobj_i$, $g : W \to X$,
\begin{align*}
\pi_i\circ \ab<f_1,f_2> = f_i~ (i=1,2),\quad \ab<\pi_1,\pi_2> = \Id_{\varobj_1\times \varobj_2},\quad \ab<f_1,f_2>\circ g = \ab<f_1\circ g,f_2\circ g>,\quad
!_X \circ g = {!_W},
\end{align*}
and $\ev{\varobj}{\varobj[1]}$ and $\lambda f$ satisfy, for any $f : \varobj \to \varobj[2]^{\varobj[1]}$, $g : \varobj \to \varobj[2]^{\varobj[1]}$,
\[
\ev{\varobj[1]}{\varobj[2]}\circ (\lambda f\times \Id_{\varobj[1]}) = f,
\quad \lambda(\ev{\varobj[1]}{\varobj[2]}\circ (g\times \Id_{\varobj[1]})) = g
\]
where $f \times g$ is the shorthand for $\ab<f\circ \pi_1,g\circ \pi_2>$.
\end{definition}
We write $\catcfam_\varsorts$ for $F_{\catccc_\varsorts}\emptyset$ where $\emptyset$ is considered as the functor $\Ty_\varsorts \times \Ty_\varsorts \to \catset$ that maps every $(X,Y) \in \Ty_\varsorts \times \Ty_\varsorts$ into the empty set.

\begin{lemma}
$F_{\catccc_\varsorts}\varsign$ is a CCC.
\end{lemma}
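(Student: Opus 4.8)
The plan is to verify directly that $F_{\catccc_\varsorts}\varsign$ carries the three pieces of structure constituting a cartesian closed category: a terminal object, binary products, and, for each object, a right adjoint to the corresponding product functor. Note first that $F_{\catccc_\varsorts}\varsign$ genuinely is a category: its hom-sets are the formal composites of the listed generators taken modulo the congruence generated by the listed equations, and the identity and associativity laws are among those equations. Since a category has all finite products precisely when it has a terminal object together with binary products, it then suffices to show (i) that $1$ is terminal, (ii) that each $\varobj_1\times\varobj_2$ equipped with $\projection_1,\projection_2$ is a binary product, and (iii) that for each object $\varobj[1]$ the functor $-\times\varobj[1]$ admits $(-)^{\varobj[1]}$ as a right adjoint, with counit given by the evaluation morphisms.

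For (i): the generator $!_\varobj : \varobj\to 1$ provides a morphism into $1$, and the imposed equation makes each hom-set $\Hom(\varobj,1)$ a singleton. For (ii): given $f_i : \varobj\to\varobj_i$, the morphism $\ab<f_1,f_2>$ satisfies $\projection_i\circ\ab<f_1,f_2>=f_i$ by the first product equation, and if $h : \varobj\to\varobj_1\times\varobj_2$ satisfies $\projection_i\circ h=f_i$ for $i=1,2$, then
\[
h=\ab<\projection_1,\projection_2>\circ h=\ab<\projection_1\circ h,\projection_2\circ h>=\ab<f_1,f_2>,
\]
using $\ab<\projection_1,\projection_2>=\Id_{\varobj_1\times\varobj_2}$ together with $\ab<f_1,f_2>\circ g=\ab<f_1\circ g,f_2\circ g>$. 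So binary products exist. In particular $-\times\varobj[1]$ is a functor, sending $k : \varobj'\to\varobj$ to $k\times\Id_{\varobj[1]}=\ab<k\circ\projection_1,\projection_2>$; the identities $\Id_{\varobj}\times\Id_{\varobj[1]}=\Id_{\varobj\times\varobj[1]}$ and $(k_1\circ k_2)\times\Id_{\varobj[1]}=(k_1\times\Id_{\varobj[1]})\circ(k_2\times\Id_{\varobj[1]})$ both follow from the product equations by short computations.

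For (iii): fix $\varobj[1]$, and for objects $\varobj,\varobj[2]$ consider the map
\[
\Phi : \Hom(\varobj,\varobj[2]^{\varobj[1]})\longrightarrow\Hom(\varobj\times\varobj[1],\varobj[2]),\qquad \Phi(g)=\ev{\varobj[1]}{\varobj[2]}\circ(g\times\Id_{\varobj[1]}).
\]
It is natural in $\varobj$ because functoriality of $-\times\varobj[1]$ gives $(g\circ k)\times\Id_{\varobj[1]}=(g\times\Id_{\varobj[1]})\circ(k\times\Id_{\varobj[1]})$, and it is a bijection with inverse $f\mapsto\lambda f$, since $\Phi(\lambda f)=\ev{\varobj[1]}{\varobj[2]}\circ(\lambda f\times\Id_{\varobj[1]})=f$ and $\lambda(\Phi(g))=\lambda(\ev{\varobj[1]}{\varobj[2]}\circ(g\times\Id_{\varobj[1]}))=g$ are exactly the two exponential equations. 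Hence for each $\varobj[2]$ the presheaf $\Hom(-\times\varobj[1],\varobj[2])$ is representable, with representing object $\varobj[2]^{\varobj[1]}$, so $-\times\varobj[1]$ has a right adjoint. Since this holds for every $\varobj[1]$, the category $F_{\catccc_\varsorts}\varsign$ is cartesian closed.

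Because the equations in the definition of $F_{\catccc_\varsorts}\varsign$ were chosen to be precisely the $\beta$- and $\eta$-laws of the finite-product and exponential structure, none of these steps should require real ingenuity. The part I expect to be the main obstacle is the elementary product calculus underlying (ii) and (iii) — in particular establishing functoriality of $-\times\varobj[1]$, and with it the naturality of $\Phi$ — which must be ground out from $\projection_i\circ\ab<f_1,f_2>=f_i$, $\ab<\projection_1,\projection_2>=\Id$, and $\ab<f_1,f_2>\circ g=\ab<f_1\circ g,f_2\circ g>$; but even this is purely mechanical.
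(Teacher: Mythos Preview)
Your proof is correct and follows essentially the same route as the paper's: the paper dispatches the product structure by pointing to Proposition~\ref{prop:lawvere-alg} (whose argument is exactly your computation $h=\ab<\pi_1,\pi_2>\circ h=\ab<\pi_1\circ h,\pi_2\circ h>=\ab<f_1,f_2>$) and then proves uniqueness of $\lambda f$ via $h=\lambda(\ev{\varobj[1]}{\varobj[2]}\circ(h\times\Id_{\varobj[1]}))=\lambda f$, which is equivalent to your verification that $\Phi$ and $\lambda$ are mutually inverse. You simply spell out more of the surrounding detail (terminal object, functoriality of $-\times\varobj[1]$, naturality of $\Phi$) that the paper leaves implicit.
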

\begin{proof}
We can show that $F_{\catccc_\varsorts}\Sigma$ is a Lawvere theory in a similar way to the proof of Proposition \ref{prop:lawvere-alg}.
Also, we show that $\lambda f$ is the unique morphism satisfying $\ev{\varobj[1]}{\varobj[2]}\circ (\lambda f\times \Id_{\varobj[1]}) = f$ for any $f : \varobj \times \varobj[1] \to \varobj[2]$.
Let $h : \varobj \to \varobj[2]^{\varobj[1]}$ be a morphism satisfying $\ev{\varobj[1]}{\varobj[2]}\circ (h\times \Id_{\varobj[1]}) = f$.
Then, $h = \lambda (\ev{\varobj[1]}{\varobj[2]}\circ (h\times \Id_{\varobj[1]})) = \lambda f$.
\end{proof}

\begin{definition}
An $\varsorts$-\emph{sorted CCC} is a CCC $\varccc$ together with a cartesian closed functor $\varsortedmor : \catcfam_\varsorts \to \varccc$ such that $\Ob\varccc = \Ty_\varsorts$ and $\varsortedmor$ is identity on objects.
We often call $\varccc$ an $\varsorts$-sorted CCC without mentioning $\varsortedmor$.
The full subcategory of $\catcfam_\varsorts\backslash\catccc$ consisting of $\varsorts$-sorted CCCs is denoted by $\catccc_\varsorts$.
\end{definition}

Note that $\iota : \catcfam_\varsorts \to \bfC$ is uniquely determined by its values $\iota(\pi_i)$, $\iota(\ev{Y}{Z})$.
Thus, we can think of an $\varsorts$-sorted CCC as a CCC with object set $\Ty_\varsorts$ where projections and evaluation maps are specified.

Like Proposition \ref{prop:sortedtheory}, any CCC $\bfC$ is equivalent to an $\varsorts$-sorted CCC for some set $\varsorts$ as follows.
\begin{proposition}\label{prop:sortedccc}
For any CCC $\varccc$, there exists a set $\varsorts$ and an $\varsorts$-sorted CCC $\catcfam_\varsorts \to \tilde\varccc$ such that $\varccc$ is equivalent to $\tilde\varccc$.
\end{proposition}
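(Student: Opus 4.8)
The plan is to follow the template of the proof of Proposition~\ref{prop:sortedtheory}, with one extra ingredient forced on us because $\Ty_\varsorts$ is the \emph{free} pointed bi-magma on $\varsorts$: besides the sorts, it contains formal objects $1$, $X\times Y$, $Y^X$ that need not occur literally in $\varccc$, so I would first fix an interpretation map $\llbracket-\rrbracket:\Ty_\varsorts\to\Ob(\varccc)$ and transport the hom-sets of $\varccc$ along it. Concretely, take $\varsorts=\Ob(\varccc)$ and, invoking the axiom of choice if the cartesian closed structure is not already part of the data, fix once and for all a terminal object $1_\varccc$, a chosen product $A\times_\varccc B$ with its two projections for every ordered pair $A,B$ of objects, and a chosen exponential $B^{A}_{\varccc}$ with its evaluation morphism for every ordered pair.

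Next I would define $\llbracket-\rrbracket:\Ty_\varsorts\to\Ob(\varccc)$ by recursion on the construction of $\Ty_\varsorts$: $\llbracket X\rrbracket=X$ for $X\in\varsorts$, $\llbracket 1\rrbracket=1_\varccc$, $\llbracket X\times Y\rrbracket=\llbracket X\rrbracket\times_\varccc\llbracket Y\rrbracket$, and $\llbracket Y^X\rrbracket=\llbracket Y\rrbracket^{\llbracket X\rrbracket}_{\varccc}$. Then let $\tilde\varccc$ be the category with $\Ob(\tilde\varccc)=\Ty_\varsorts$, $\Hom_{\tilde\varccc}(A,B)=\Hom_{\varccc}(\llbracket A\rrbracket,\llbracket B\rrbracket)$, and composition and identities inherited from $\varccc$. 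The assignment $P$ sending an object $A$ to $\llbracket A\rrbracket$ and a morphism to itself is a functor $P:\tilde\varccc\to\varccc$; it is full and faithful by construction and surjective on objects since $\varsorts\subseteq\Ty_\varsorts$ with $\llbracket X\rrbracket=X$, hence an equivalence, which already yields $\varccc\simeq\tilde\varccc$.

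It remains to upgrade $\tilde\varccc$ to an $\varsorts$-sorted CCC. I would give $\tilde\varccc$ cartesian closed structure realized strictly on the formal operations of $\Ty_\varsorts$: declare $1$ terminal, $X\times Y$ the product of $X$ and $Y$, and $Y^X$ the exponential, with the projections $\projection_i$ and the evaluations $\ev{\varobj}{\varobj[1]}$ taken to be the chosen morphisms of $\varccc$ fixed above (which have the right domain and codomain \emph{in} $\tilde\varccc$ precisely because of the recursion defining $\llbracket-\rrbracket$). Because $P$ is fully faithful and \emph{strictly} preserves these data---$P(X\times Y)=PX\times_\varccc PY$ on the nose, and likewise for $1$ and $Y^X$---the universal properties of terminal object, binary product, and exponential transfer from $\varccc$ back to $\tilde\varccc$, so $\tilde\varccc$ is a CCC and $P$ is cartesian closed. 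Finally, since $\catcfam_\varsorts=F_{\catccc_\varsorts}\emptyset$ is generated by identities, the terminal maps $!$, the projections $\projection_i$, and the evaluations $\ev{\varobj}{\varobj[1]}$ together with composition, pairing $\ab<-,->$, and currying $\currying{(-)}$, subject to exactly the CCC equations imposed in the definition of $F_{\catccc_\varsorts}\varsign$, sending each generator to the morphism of the same name in $\tilde\varccc$ and extending by composition, pairing, and currying defines a functor $\varsortedmor:\catcfam_\varsorts\to\tilde\varccc$; it is well defined because those equations hold in $\tilde\varccc$ (a CCC with that very structure), is identity on objects, and is cartesian closed. Hence $(\tilde\varccc,\varsortedmor)$ is an $\varsorts$-sorted CCC with $\tilde\varccc\simeq\varccc$.

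I do not expect a serious obstacle: the statement is ``soft'' and parallels Proposition~\ref{prop:sortedtheory} almost verbatim. The step that needs the most care is the transfer of the cartesian closed structure along the fully faithful $P$, but even that reduces immediately to the definitional identities $\llbracket X\times Y\rrbracket=\llbracket X\rrbracket\times_\varccc\llbracket Y\rrbracket$ and $\llbracket Y^X\rrbracket=\llbracket Y\rrbracket^{\llbracket X\rrbracket}_\varccc$, which convert, e.g., the natural bijection $\Hom_{\varccc}(\llbracket C\rrbracket\times_\varccc\llbracket A\rrbracket,\llbracket B\rrbracket)\cong\Hom_{\varccc}(\llbracket C\rrbracket,\llbracket B\rrbracket^{\llbracket A\rrbracket}_\varccc)$ into the required exponential adjunction $\Hom_{\tilde\varccc}(C\times A,B)\cong\Hom_{\tilde\varccc}(C,B^A)$ inside $\tilde\varccc$. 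The remaining checks---well-definedness of $\llbracket-\rrbracket$, functoriality of $P$, and well-definedness of $\varsortedmor$---are purely formal.
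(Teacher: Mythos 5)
Your proposal is correct and follows essentially the same route as the paper: take $\varsorts=\Ob(\varccc)$, define the interpretation $\llbracket-\rrbracket$ (the paper's $\tilde X$) by recursion on $\Ty_\varsorts$, set $\Hom_{\tilde\varccc}(A,B)=\Hom_\varccc(\llbracket A\rrbracket,\llbracket B\rrbracket)$, and observe that the evident functor $\tilde\varccc\to\varccc$ is full, faithful, and surjective on objects. Your additional paragraphs spelling out the strict CCC structure on $\tilde\varccc$ and the induced $\varsortedmor:\catcfam_\varsorts\to\tilde\varccc$ only make explicit what the paper leaves implicit.
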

\begin{proof}
Construct $\tilde\bfC$ as follows.
Let $\Ob\ab(\tilde\varccc) = \Ty_{\Ob(\varccc)}$ and $\Hom_{\tilde\varccc}(X,Y) = \Hom_\varccc(\tilde X,\tilde Y)$ where $\tilde X$ is defined as (i) if $X \in \varsorts$, then $\tilde X = X$ (ii) if $X = 1$, then $\tilde X = 1$, (ii) if $X = X_1\times X_2$, then $\tilde X = \tilde X_1 \times \tilde X_2$, and (iv) if $X = X_2^{X_1}$, then $\tilde X = \tilde X_2^{\tilde X_1}$.
Then, we have a full and faithful functor $\tilde\varccc \to \varccc$ that is surjective on objects.
\end{proof}

In a similar way to the case for $\varsorts$-sorted Lawvere theories, we have
\begin{proposition}
$\catccc_\varsorts$ is algebraic.
\end{proposition}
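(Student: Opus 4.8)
The plan is to mimic the proof of Proposition~\ref{prop:lawvere-alg}, enlarging the equational presentation of $\cattheories_\varsorts$ so that it also records the exponential structure. Take the set of sorts to be $\Ty_\varsorts\times\Ty_\varsorts$ and let $(\varsign^{\catccc_\varsorts},E^{\catccc_\varsorts})$ be the $\Ty_\varsorts\times\Ty_\varsorts$-sorted equational presentation whose operation symbols are, for all $W,X,Y,Z,X_1,X_2\in\Ty_\varsorts$,
\begin{itemize}
\item composition ${-}\circ_{X,Y,Z}{-}:(Y,Z)\times(X,Y)\to(X,Z)$ and identities $\Id_X:1\to(X,X)$;
\item pairing $\ab<-,->_{W,X_1,X_2}:(W,X_1)\times(W,X_2)\to(W,X_1\times X_2)$, projections $\projection_i^{X_1,X_2}:1\to(X_1\times X_2,X_i)$ for $i=1,2$, and terminal maps $!_X:1\to(X,1)$;
\item evaluation $\ev{X}{Y}:1\to(Y^X\times X,Y)$ and currying $\lambda_{W,X,Y}:(W\times X,Y)\to(W,Y^X)$;
\end{itemize}
and whose equations are the category axioms for $\circ,\Id$ (so that the $\circ,\Id$-fragment is precisely the presentation of $\catcat_{\Ty_\varsorts}$), the product and terminal-object axioms $\projection_i\circ\ab<x_1,x_2>\approx x_i$, $\ab<\projection_1,\projection_2>\approx\Id$, $\ab<x_1,x_2>\circ y\approx\ab<x_1\circ y,x_2\circ y>$, $!_X\circ g\approx{!_W}$ (as in the proof of Proposition~\ref{prop:lawvere-alg}), and the exponential axioms
\[
\ev{X}{Y}\circ(\lambda f\times\Id_X)\approx f,\qquad \lambda(\ev{X}{Y}\circ(g\times\Id_X))\approx g,
\]
where $f$ has sort $(W\times X,Y)$, $g$ has sort $(W,Y^X)$, and $h\times k$ abbreviates $\ab<h\circ\projection_1,k\circ\projection_2>$.

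The next step is to establish an isomorphism of categories $\Alg(\varsign^{\catccc_\varsorts},E^{\catccc_\varsorts})\cong\catccc_\varsorts$. An algebra $A=\{A_{(X,Y)}\}$ yields a category $\varccc$ with $\Ob\varccc=\Ty_\varsorts$, $\Hom_\varccc(X,Y)=A_{(X,Y)}$, and composition and identities given by the interpretations of $\circ$ and $\Id$; the product and terminal axioms then make $\llbracket\projection_i\rrbracket$, $\llbracket\ab<-,->\rrbracket$ and $\llbracket!_X\rrbracket$ exhibit genuine binary products and a terminal object, exactly as in Proposition~\ref{prop:lawvere-alg}, while the two exponential axioms make $\llbracket\ev{X}{Y}\rrbracket$ and $\llbracket\lambda\rrbracket$ exhibit genuine exponentials, the $\eta$-axiom supplying uniqueness of the transpose --- this last step copying the uniqueness argument in the proof that $F_{\catccc_\varsorts}\varsign$ is a CCC. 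So $\varccc$ is a CCC with object set $\Ty_\varsorts$ in which projections and evaluation maps are specified, i.e.\ an $\varsorts$-sorted CCC: the functor $\iota:\catcfam_\varsorts\to\varccc$ exists and is unique because $\catcfam_\varsorts=F_{\catccc_\varsorts}\emptyset$ is freely generated, and it is cartesian closed because the specified data are genuinely universal. Conversely an $\varsorts$-sorted CCC yields such an algebra (with the operations interpreted as composition, identities, the chosen pairings, projections, terminal maps, evaluations and curryings), and the two assignments are mutually inverse, using the remark that $\iota$ is determined by its values on the $\projection_i$ and the $\ev{X}{Y}$. On morphisms, a map of algebras is a family of hom-set maps commuting with all the operations, that is, an identity-on-objects functor strictly preserving composition, identities, the chosen products, terminal object and exponentials --- exactly a morphism of $\varsorts$-sorted CCCs. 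By Proposition~\ref{prop:lawvere-eq} there is then a ($\Ty_\varsorts\times\Ty_\varsorts$-sorted) Lawvere theory $\varlaw$ with $\Alg\varlaw\cong\Alg(\varsign^{\catccc_\varsorts},E^{\catccc_\varsorts})\cong\catccc_\varsorts$, so $\catccc_\varsorts$ is algebraic.

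The main obstacle is the completeness half of this correspondence: one must check that the listed equations genuinely force the chosen projections, terminal maps and evaluation maps to satisfy the full \emph{universal} properties, not merely the written identities. For binary products and the terminal object this is the same verification as in Proposition~\ref{prop:lawvere-alg}, so the genuinely new content is the exponential case, which is handled by transcribing the uniqueness-of-$\lambda f$ argument from the lemma that $F_{\catccc_\varsorts}\varsign$ is a CCC. Two smaller points also need attention: that morphisms of algebras coincide on the nose with morphisms of $\varsorts$-sorted CCCs --- immediate, since both mean ``strictly preserve all the structure, acting as the identity on objects'' --- and that although $\Ty_\varsorts$, hence the sort set $\Ty_\varsorts\times\Ty_\varsorts$ and the signature $\varsign^{\catccc_\varsorts}$, is infinite when $\varsorts\neq\emptyset$, every operation symbol has arity at most $2$, so $(\varsign^{\catccc_\varsorts},E^{\catccc_\varsorts})$ is a legitimate finitary equational presentation and Proposition~\ref{prop:lawvere-eq} applies.
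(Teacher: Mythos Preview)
Your proposal is correct and follows essentially the same route as the paper: both take $\Ty_\varsorts\times\Ty_\varsorts$ as the sort set, extend the presentation of $\cattheories_\varsorts$ by the operations $\lambda$ and $\mathrm{ev}$ together with the $\beta$- and $\eta$-equations, and then verify $\Alg(\varsign^{\catccc_\varsorts},E^{\catccc_\varsorts})\simeq\catccc_\varsorts$. Your write-up is in fact more careful than the paper's (you use binary rather than $n$-ary pairing, include $!_X$ explicitly, spell out why the universal properties hold, and note the finitary-arity point), but none of this constitutes a genuinely different argument.
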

\begin{proof}
Take the set of sorts as $\Ty_\varsorts \times \Ty_\varsorts$.
Let $(\Sigma^{\cattheories_\varsorts},E^{\cattheories_\varsorts})$ be the equational presentation for $\cattheories_\varsorts$ constructed in the proof of Proposition \ref{prop:lawvere-alg}.
We construct $\Sigma^{\catccc_\varsorts}$ by adding operations $\lambda^{X,Y,Z} : (X\times Y, Z) \to (X, Z^Y)$ and $\ev{X}{Y} : 1 \to (Y^X\times X, Y)$ for each $X,Y,Z \in \Ty_\varsorts$ to $\Sigma^{\cattheories_\varsorts}$, and construct $E^{\catccc_\varsorts}$ by adding equations
\[
\ev{}{}\circ (\lambda(g)\times \Id) = g
\quad \text{and} \quad
\lambda(\ev{}{}\circ (h \times \Id)) = h
\]
to $E^{\cattheories_\varsorts}$ where $f_1 \times f_2$ is the shorthand for $\ab<f_1 \circ \pi_1, f_2\circ \pi_2>$.
Then, we can check the equivalence $\Alg(\Sigma^{\catccc_\varsorts},E^{\catccc_\varsorts}) \simeq \catccc_\varsorts.$
\end{proof}

Also, we have an adjunction between $\catccc_\varsorts$ and $\catset^{\Ty_\varsorts \times \Ty_\varsorts}$.
Let $U_{\catccc_\varsorts} : \catccc_\varsorts \to \catset^{\Ty_\varsorts \times \Ty_\varsorts}$ be the functor that maps $\bfC$ to $(X,Y)\mapsto \Hom_\bfC(X,Y)$.
Then, we have the following:
\begin{proposition}
The functor $F_{\catccc_\varsorts} : \catset^{\Ty_\varsorts \times \Ty_\varsorts} \to \catccc_\varsorts$ is a left adjoint of $U_{\catccc_\varsorts}$.
\end{proposition}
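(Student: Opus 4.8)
The plan is to verify the universal property of the adjunction directly, by exhibiting, for every family $\varsign\in\catset^{\Ty_\varsorts\times\Ty_\varsorts}$ and every $\varsorts$-sorted CCC $\bfC$, a bijection
\[
\Hom_{\catccc_\varsorts}(F_{\catccc_\varsorts}\varsign,\bfC)\cong\Hom_{\catset^{\Ty_\varsorts\times\Ty_\varsorts}}(\varsign,U_{\catccc_\varsorts}\bfC),
\]
natural in $\varsign$ and in $\bfC$. One direction is immediate: since a morphism $\Phi:F_{\catccc_\varsorts}\varsign\to\bfC$ of $\varsorts$-sorted CCCs is the identity on objects, it restricts to the family of functions $\varsign(\varobj,\varobj[1])\to\Hom_\bfC(\varobj,\varobj[1])$, $f\mapsto\Phi(f)$, each generator $f\in\varsign(\varobj,\varobj[1])$ being a morphism $\varobj\to\varobj[1]$ of $F_{\catccc_\varsorts}\varsign$.

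For the reverse direction, given a family $\varphi=\{\varphi_{\varobj,\varobj[1]}:\varsign(\varobj,\varobj[1])\to\Hom_\bfC(\varobj,\varobj[1])\}$, I would define $\hat\varphi:F_{\catccc_\varsorts}\varsign\to\bfC$ by recursion on the structure of the formal expressions representing morphisms of $F_{\catccc_\varsorts}\varsign$: it is the identity on objects; $\hat\varphi(f)=\varphi_{\varobj,\varobj[1]}(f)$ on a generator $f$; $\hat\varphi$ sends each $\Id_{\varobj}$, $!_{\varobj}$, $\projection_i$, $\ev{\varobj}{\varobj[1]}$ to the corresponding structure morphism of $\bfC$ (available because $\bfC$ is an $\varsorts$-sorted CCC, whose projections and evaluation maps are supplied by $\varsortedmor:\catcfam_\varsorts\to\bfC$); and $\hat\varphi(g\circ f)=\hat\varphi(g)\circ\hat\varphi(f)$, $\hat\varphi(\ab<f_1,f_2>)=\ab<\hat\varphi(f_1),\hat\varphi(f_2)>$, $\hat\varphi(\currying f)=\currying{(\hat\varphi(f))}$. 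The step requiring care is that $\hat\varphi$ be well defined on equivalence classes: every generating equation of $F_{\catccc_\varsorts}\varsign$ --- the identity and composition laws, the pairing laws $\projection_i\circ\ab<f_1,f_2>=f_i$ and $\ab<\projection_1,\projection_2>=\Id$, the remaining product and terminal-object equations, and the two exponential laws from the definition --- holds identically in the CCC $\bfC$, so the recursion descends to the generating congruence. Granting this, $\hat\varphi$ is a functor; being the identity on objects and sending the specified $\projection_i$, $\ev{\varobj}{\varobj[1]}$ of $F_{\catccc_\varsorts}\varsign$ to those of $\bfC$, it is a cartesian closed functor commuting with $\varsortedmor$ --- recall that in an $\varsorts$-sorted CCC the product of $\varobj$ and $\varobj[1]$ is the object $\varobj\times\varobj[1]$ with the specified projections, and the exponential is $\varobj[1]^{\varobj}$ with the specified evaluation, so preserving this data preserves the cartesian closed structure on the nose --- hence $\hat\varphi$ is a morphism of $\catccc_\varsorts$.

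It then remains to check that $\hat\varphi$ is the unique morphism of $\varsorts$-sorted CCCs restricting to $\varphi$, and that the two assignments are mutually inverse and natural. Uniqueness is a second structural induction: any morphism $\Phi$ with $\Phi(f)=\varphi_{\varobj,\varobj[1]}(f)$ on generators is forced to preserve identities and composition, to send $!_{\varobj}$ to the unique arrow $\varobj\to 1$, to send $\projection_i$ and $\ev{\varobj}{\varobj[1]}$ to their counterparts in $\bfC$ (it respects the $\varsorts$-sorted structure), and to send $\ab<f_1,f_2>$ and $\currying f$ to $\ab<\Phi(f_1),\Phi(f_2)>$ and $\currying{(\Phi(f))}$ (a cartesian closed functor preserves the chosen products and exponentials, or one invokes the uniqueness clauses built into the definition of $F_{\catccc_\varsorts}\varsign$); hence $\Phi=\hat\varphi$. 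With the bijection in hand, naturality in $\varsign$ and in $\bfC$ is a routine diagram chase. I expect the only genuinely delicate point to be the well-definedness of $\hat\varphi$, i.e.\ the soundness of the cartesian closed axioms in $\bfC$; everything else is bookkeeping. (Alternatively one can proceed abstractly: the preceding proposition gives $\catccc_\varsorts\simeq\Alg(\Sigma^{\catccc_\varsorts},E^{\catccc_\varsorts})$, every equational category admits free algebras over its sorted sets of variables, and it then suffices to recognise $U_{\catccc_\varsorts}$ as the forgetful functor and $F_{\catccc_\varsorts}\varsign$ as the free algebra on $\varsign$ --- but recognising the latter is the same delicate point, now phrased as identifying $F_{\catccc_\varsorts}\varsign$ with the term model.)
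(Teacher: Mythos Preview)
The paper does not supply a proof of this proposition; it is stated and immediately used. Your argument is correct and fills in exactly what the paper leaves implicit: the direct verification that the term model $F_{\catccc_\varsorts}\varsign$ satisfies the universal property of a free object, by induction on the syntax of morphisms and an appeal to the soundness of the defining equations in any CCC.

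Two minor remarks. First, your alternative route---invoking the preceding proposition that $\catccc_\varsorts\simeq\Alg(\Sigma^{\catccc_\varsorts},E^{\catccc_\varsorts})$ and then identifying $F_{\catccc_\varsorts}\varsign$ with the free algebra on $\varsign$---is almost certainly what the author has in mind, given the placement of the proposition immediately after the equational presentation and the absence of any proof. Your observation that ``recognising the latter is the same delicate point'' is exactly right: both routes bottom out in checking that the explicit inductive description of $F_{\catccc_\varsorts}\varsign$ matches the syntactic term algebra modulo $E^{\catccc_\varsorts}$. Second, in the uniqueness step you are right that a morphism of $\catccc_\varsorts$ must send $\langle f_1,f_2\rangle$ to $\langle\Phi(f_1),\Phi(f_2)\rangle$ and $\lambda f$ to $\lambda(\Phi(f))$; this follows not from any ``uniqueness clause built into the definition'' of $F_{\catccc_\varsorts}\varsign$ but from the fact that $\Phi$ commutes with $\iota$ (hence preserves the specified projections and evaluations) together with the universal properties of products and exponentials in $\bfC$. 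Your parenthetical already says this correctly; I would simply drop the alternative phrasing.
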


So, we can call $F_{\catccc_\varsorts}\varsign$ the \emph{free $\varsorts$-sorted CCC} generated by $\Sigma$.

\section{Natural systems}

The first two subsections of this section are reviews of \cite{baues85}, \cite{jp05} and \cite{jp06}.

\subsection{Natural Systems and Linear Extensions}

Let $\bfC$ be a category.

\begin{definition}
The category $\catfact\bfC$ of \emph{factorizations} in $\bfC$ is defined as follows.
Objects of $\catfact\bfC$ are morphisms in $\bfC$, and morphisms $(a,b): f \to g$ in $\catfact\bfC$ for $f: A \to B$, $g: A'\to B'$ are commutative diagrams
\[
\begin{tikzcd}
	A \ar[d,"f"] & A' \ar[l,"a"] \ar[d, "g"]\\
	B \ar[r, "b"] & B'
\end{tikzcd}
\]
in $\bfC$.
\end{definition}

\begin{definition}
A \emph{natural system} on $\bfC$ is a functor $F: \catfact\bfC \to \catab$.
A \emph{morphism of natural systems} is a natural transformation.
We write $\catnat_\bfC$ for the category of natural systems on $\bfC$.
That is, $\catnat_\bfC = \catab^{\catfact\bfC}$.
\end{definition}

Notation: We write $D_f$ for $D(f)$.
For $a: C \to D$, $f: A \to C$, $g: D \to B$,
we write $a_*$ for $D(1_A,a): D_f \to D_{a\circ f}$ and $a^*$ for $D(a,1_B): D_g \to D_{g\circ a}$.

\begin{definition}
Let $D$ be a natural system on $\bfC$.
A \emph{linear extension} of $\bfC$ by $D$, written $D \to \bfE \xrightarrow{p} \bfC$, is a category $\bfE$ together with a full functor $p: \bfE \to \bfC$ that is identity on objects and, for each $f: X \to Y$ in $\bfC$, a transitive and effective action of $D_f$ on $p^{-1}(f)\subset \Hom_\bfE(X,Y)$
\[
{+} : D_f\times p^{-1}(f) \to p^{-1}(f)
\]
such that
\[
(\xi + \tilde f)\circ (\eta + \tilde g) = f_* \eta + g^* \xi + \tilde f \circ \tilde g
\]
where $f: X \to Y$, $g: W \to X$ are in $\bfC$, $\tilde f \in p^{-1}(f)$, $\tilde g \in p^{-1}(g)$, and $\xi \in D_f$, $\eta \in D_g$.

Two extensions $D \to \bfE\xrightarrow{p}\bfC$ and $D \to \bfE'\xrightarrow{p'}\bfC$ are \emph{equivalent} if there is an isomorphism of categories $\epsilon: \bfE \to \bfE'$ satisfying $p'\circ \epsilon = p$ and $\epsilon(\xi+\tilde f)=\xi + \epsilon(\tilde f)$.
\end{definition}

\begin{definition}
Let $D$ be a natural system on $\bfC$.
A \emph{trivial linear extension} $D\rtimes \bfC$ is defined as
\[
\Hom_{D\rtimes \bfC}(X,Y) = \coprod_{f\in \Hom_\bfC(X,Y)} D_f 
\]
and composition given by
\[
\xi \circ \eta = f_* \eta + g^* \xi
\]
for $f: X \to Y$, $g: W \to X$ in $\bfC$ and $\xi \in D_f$, $\eta\in D_g$.
The group action $D_f \times D_f \to D_f$ is the addition in $D_f$.
\end{definition}

\begin{definition}
We say that a linear extension $D \to \bfE \xrightarrow{p} \bfC$ \emph{splits} if there is a functor $s : \bfC \to \bfE$, called a \emph{section}, satisfying $ps = \Id_\bfC$.
\end{definition}
It is not difficult to see that a linear extension splits if and only if it is equivalent to the trivial linear extension.

\begin{proposition}\label{prop:nat-beck}\cite{jp05}
Let $O$ be the set of objects of $\bfC$.
There is an equivalence of categories $\catnat_\bfC \simeq (\catcat_O/\bfC)_\abobjs$.
\end{proposition}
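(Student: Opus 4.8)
The plan is to describe abelian group objects in $\catcat_O/\bfC$ completely explicitly and then exhibit quasi-inverse functors to and from $\catnat_\bfC$; this is essentially the argument of \cite{jp05}. First I would unwind the notion of abelian group object. An object of $\catcat_O/\bfC$ is an identity-on-objects functor $q\colon\bfD\to\bfC$, and an abelian group structure on it is a triple of morphisms $z\colon\bfC\to\bfD$, $m\colon\bfD\times_\bfC\bfD\to\bfD$, $n\colon\bfD\to\bfD$ of $\catcat_O/\bfC$ (identity on objects and over $\bfC$) satisfying the abelian-group diagrams. Restricting to a fixed hom-set, this data amounts to an abelian group structure $+$ on each fibre $q^{-1}(f)$, with neutral element $s(f):=z(f)$, such that $s\colon\bfC\to\bfD$ is a functor and composition in $\bfD$ is compatible with $m$. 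The one computation I would isolate at this stage, and reuse throughout, is the identity
\[
\tilde f \circ \tilde g = \bigl(\tilde f \circ s(g)\bigr) + \bigl(s(f) \circ \tilde g\bigr)
\]
for composable $\tilde f\in q^{-1}(f)$, $\tilde g\in q^{-1}(g)$. This follows by writing $\tilde f=\tilde f+s(f)$ and $\tilde g=s(g)+\tilde g$ (legitimate since $s(f),s(g)$ are the fibrewise neutral elements) and applying functoriality of $m$ to the composable pair $(\tilde f,s(f))\circ(s(g),\tilde g)$. In particular pre- or post-composition by a neutral element $s(a)$ is an additive map between fibres.

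Next I would build a functor $\Phi\colon(\catcat_O/\bfC)_\abobjs\to\catnat_\bfC$. Given $q\colon\bfD\to\bfC$ as above, set $D_f:=q^{-1}(f)$ with its group structure, and for a morphism $(a,b)\colon f\to g$ of $\catfact\bfC$ (so $g=b\circ f\circ a$) put $D(a,b)(\tilde f):=s(b)\circ\tilde f\circ s(a)$; this lands in $D_g$ and is a homomorphism by the additivity just noted, and functoriality in $(a,b)$ follows from functoriality of $s$ and associativity in $\bfD$. A morphism of abelian group objects restricts on fibres to homomorphisms commuting with the maps $D(a,b)$, i.e.\ to a morphism of natural systems, so $\Phi$ is well defined. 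In the other direction I would define $\Psi\colon\catnat_\bfC\to(\catcat_O/\bfC)_\abobjs$ by sending a natural system $D$ to the trivial linear extension $D\rtimes\bfC\to\bfC$ equipped with the fibrewise abelian group structure (zero section $f\mapsto 0\in D_f$, addition and negation computed in each $D_f$); that $z,m,n$ are then identity-on-objects functors over $\bfC$ reduces, using the composition rule $\xi\circ\eta=f_*\eta+g^*\xi$ of $D\rtimes\bfC$, to the additivity of the maps $a_*$ and $a^*$, i.e.\ to $D$ taking values in $\catab$. Morphisms of natural systems clearly induce morphisms of abelian group objects.

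Finally I would check that $\Phi$ and $\Psi$ are mutually quasi-inverse. For a natural system $D$, the fibres of $\Psi(D)$ are the groups $D_f$, and unwinding $\Phi\Psi(D)$ gives $D(a,b)(\tilde f)=0_b\circ\tilde f\circ 0_a=b_*\,a^*\tilde f=D(a,b)(\tilde f)$, using $a^*0_a=b^*0_b=0$; so $\Phi\Psi\cong\Id$ canonically. For an abelian group object $q\colon\bfD\to\bfC$, write $D=\Phi(\bfD)$; the identity on each fibre $q^{-1}(f)=D_f$ assembles into an identity-on-objects functor $\Psi(D)\to\bfD$ over $\bfC$ that is bijective on morphisms, because the composition law of $\Psi(D)$, namely $\tilde f\circ\tilde g=f_*\tilde g+g^*\tilde f=(s(f)\circ\tilde g)+(\tilde f\circ s(g))$, is precisely the displayed splitting identity in $\bfD$, and it visibly respects $z,m,n$; hence $\Psi\Phi\cong\Id$. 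I expect the only real friction to be the bookkeeping around that splitting identity — matching the functoriality of $m$, the role of $s(f)$ as the fibrewise zero, and the source/target conventions for $a_*$ and $a^*$ — everything else being formal once abelian group objects in $\catcat_O/\bfC$ are understood fibrewise.
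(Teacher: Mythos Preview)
The paper does not give its own proof of this proposition; it simply cites \cite{jp05}. Your argument is correct and is indeed the standard one: unpack an abelian group object $q\colon\bfD\to\bfC$ in $\catcat_O/\bfC$ fibrewise, use functoriality of the multiplication $m$ to obtain the ``splitting identity'' $\tilde f\circ\tilde g=(\tilde f\circ s(g))+(s(f)\circ\tilde g)$, and then run the two constructions $\Phi$ and $\Psi$ against each other. This is exactly the template the paper itself follows later when proving the analogous equivalence $\catccnat_\bfC\simeq(\catccc_\varsorts/\bfC)_\abobjs$ (sending $D$ to $D\rtimes\bfC$ in one direction, and setting $D(p)_f=p^{-1}(f)$ with $D(p)(a,b)(\tilde f)=0_b\circ\tilde f\circ 0_a$ in the other), so your write-up is fully in line with the paper's methods.
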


\subsection{Lawvere theories and cartesian natural systems}
\begin{definition}
Let $\varlaw$ be a small category with finite products and $D$ be a natural system on $\varlaw$.
We say that $D$ is \emph{cartesian} if, for any $f: X \to X_1\times\dots \times X_n$ and projections $\pi_k: X_1\times\dots\times X_n \to X_k$, the group homomorphism
\begin{align}\label{eqn:cartesian}
\begin{split}
	D_f &\to D_{\pi_1\circ f}\times \dots \times D_{\pi_n\circ f}\\
	\xi &\mapsto (\pi_{1*}\xi,\dots,\pi_{n*} \xi)
\end{split}
\end{align}
is an isomorphism.
\end{definition}

\begin{lemma}\cite{jp06}\label{lem:cartnat}
Let $\bfC$ be a category with finite products and $D \to \bfE \xrightarrow{p} \bfC$ be a linear extension of $\bfC$ by a natural system $D$.
Then, $D$ is cartesian if and only if $\bfE$ has finite products and $p$ is a product-preserving functor.
Also, in that case, if $\bfC$ has a structure of $\varsorts$-sorted Lawvere theory $\catfam_\varsorts \to \bfC$, then there exists $\catfam_\varsorts \to \bfE$ such that $p$ is a morphism between $\varsorts$-sorted Lawvere theories.
\end{lemma}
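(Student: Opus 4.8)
The plan is to run everything through the torsor calculus attached to a linear extension. Fix objects $X_1,\dots,X_n$ together with a chosen product $X_1\times\cdots\times X_n$ and projections $\pi_k$ in $\bfC$, pick lifts $\tilde\pi_k\in p^{-1}(\pi_k)$ (which exist since $p$ is full), and for morphisms $f_k : X\to X_k$ put $h=\langle f_1,\dots,f_n\rangle$ in $\bfC$ and fix some $\tilde h_0\in p^{-1}(h)$. Substituting $\xi=0$, $\tilde f=\tilde\pi_k$, $\tilde g=\tilde h_0$ into the defining identity $(\xi+\tilde f)\circ(\eta+\tilde g)=f\cdot\eta+\xi\cdot g+\tilde f\circ\tilde g$, and using $\pi_k\cdot\eta=\pi_{k*}\eta$ and $0\cdot h=0$, gives $\tilde\pi_k\circ(\eta+\tilde h_0)=\pi_{k*}\eta+\tilde\pi_k\circ\tilde h_0$ for all $\eta\in D_h$. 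Consequently the map
\[
p^{-1}(h)\longrightarrow\prod_{k=1}^{n}p^{-1}(f_k),\qquad \tilde h\longmapsto(\tilde\pi_1\circ\tilde h,\dots,\tilde\pi_n\circ\tilde h),
\]
is, after transport along the chosen basepoints, the group homomorphism $D_h\to\prod_{k}D_{f_k}$, $\eta\mapsto(\pi_{1*}\eta,\dots,\pi_{n*}\eta)$ of \eqref{eqn:cartesian}. A map of torsors lying over a homomorphism $\phi$ is bijective exactly when $\phi$ is an isomorphism; and since any $\tilde h$ in $\bfE$ with $\tilde\pi_k\circ\tilde h=\tilde f_k$ for all $k$ automatically satisfies $p(\tilde h)=h$, bijectivity of the displayed map is precisely the statement that $X_1\times\cdots\times X_n$ with the $\tilde\pi_k$ is a product of $X_1,\dots,X_n$ in $\bfE$ (the case $n=0$ reads: the terminal object of $\bfC$ is terminal in $\bfE$, equivalently $D_{!_X}\cong 0$). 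Hence, for any choice of lifts, this tuple is a product in $\bfE$ if and only if \eqref{eqn:cartesian} is an isomorphism for all $f : X\to X_1\times\cdots\times X_n$.

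From this the main equivalence follows at once. If $D$ is cartesian, then for every finite family $X_1,\dots,X_n$ the $\bfC$-product $X_1\times\cdots\times X_n$, equipped with any lifts $\tilde\pi_k$ of the $\bfC$-projections, is a product in $\bfE$; so $\bfE$ has all finite products, realized on the same objects, and $p$ sends each cone $(\tilde\pi_k)$ to the $\bfC$-product cone $(\pi_k)$ and so preserves finite products. Conversely, if $\bfE$ has finite products preserved by $p$, then for $X_1,\dots,X_n$ choose a product cone in $\bfE$ with apex $P$ and projections $q_k$; since $p$ preserves products, $(p(q_k))_k$ is a product cone in $\bfC$ on $P$ (an object of $\bfE$, $p$ being identity on objects), and testing \eqref{eqn:cartesian} against this reference product with the $q_k$ themselves as lifts makes the displayed map bijective, so \eqref{eqn:cartesian} is an isomorphism. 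Here one uses that the cartesian condition need only be checked for one choice of product, as it transfers along the canonical comparison isomorphisms since $D$ is a functor.

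For the ``Also'' part, assume $D$ is cartesian, so $\bfE$ has finite products by the above, and let $\iota : \catfam_\varsorts\to\bfC$ be the given $\varsorts$-sorted structure; its objects are the strings over $\varsorts$ with concatenation as product and with specified projections $\rho_j : Z_1\cdots Z_k\to Z_j$. For each string choose lifts $\tilde\rho_j\in p^{-1}(\iota(\rho_j))$; by the first paragraph each tuple $(\tilde\rho_1,\dots,\tilde\rho_k)$ is a product cone in $\bfE$. Since every morphism of $\catfam_\varsorts$ from $X_1\cdots X_n$ to $Y_1\cdots Y_m$ equals $\langle\rho_{u(1)},\dots,\rho_{u(m)}\rangle$ for the function $u : \mathbf{m}\to\mathbf{n}$ that defines it, set $\iota_\bfE : \catfam_\varsorts\to\bfE$ to be the identity on objects and to send that morphism to $\langle\tilde\rho_{u(1)},\dots,\tilde\rho_{u(m)}\rangle$, the pairing taken with respect to the cones above. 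Functoriality and product-preservation of $\iota_\bfE$ then reduce to the standard product identities in $\bfE$ (projection after pairing returns the given component, and pairing precomposes componentwise), and $p\circ\iota_\bfE=\iota$ holds because $p$ preserves products and $p(\tilde\rho_j)=\iota(\rho_j)$. Therefore $p : \bfE\to\bfC$ is identity on objects, product-preserving, and compatible with $\iota_\bfE$ and $\iota$, i.e.\ a morphism of $\varsorts$-sorted Lawvere theories.

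The anticipated difficulty is bookkeeping rather than conceptual: fixing the variances in $(\xi+\tilde f)\circ(\eta+\tilde g)=f\cdot\eta+\xi\cdot g+\tilde f\circ\tilde g$ so that $\tilde h\mapsto(\tilde\pi_k\circ\tilde h)_k$ really lies over \eqref{eqn:cartesian}, and checking functoriality of $\iota_\bfE$ — that a composite of pairings of projections is again the expected pairing of projections. The substitution, in the converse direction, of a product cone supplied by $\bfE$ for an a priori chosen one in $\bfC$ is harmless but should be stated explicitly.
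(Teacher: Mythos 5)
Your proof is correct and follows essentially the same route as the paper's: choose lifts $\tilde\pi_k$ of the projections, identify $\tilde h\mapsto(\tilde\pi_k\circ\tilde h)_k$ as an equivariant map of torsors lying over the homomorphism \eqref{eqn:cartesian}, apply Lemma \ref{lem:equivariant}, and define the sorting on $\bfE$ by sending the specified projections to the chosen lifts. You merely spell out details the paper leaves implicit (the verification via the linear-extension identity, the $n=0$ case, the converse direction, and functoriality of $\iota_\bfE$).
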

\begin{proof}
For each projection $\pi_k: X_1\times\dots \times X_n \to X_k$ in $\varlaw$, choose $\tilde\pi_k \in p^{-1}(\pi_k)$.
Then, $0_{\pi_k}\circ \tilde f = \pi_{k*}\tilde f$ for any $\tilde f \in p^{-1}(f)$ and we have the following commutative diagram:
\[
\begin{tikzcd}[column sep=17ex]
	\Hom_{\bfE}(X,X_1\times\dots\times X_n) \ar[r,"\tilde f \mapsto (\pi_{1*}\tilde f{,}\dots{,}\pi_{n*} \tilde f)"] \ar[d,"p"'] & \Hom_{\bfE}(X,X_1)\times\dots\times \Hom_\bfE(X,X_n) \ar[d,"p"]\\
	\Hom_\varlaw(X,X_1\times \dots \times X_n) \ar[r, "\sim"] & \Hom_\varlaw(X,X_1)\times\dots \times \Hom_\varlaw(X,X_n).
\end{tikzcd}
\]
Then, it is easy to see that $\bfE$ has and $p$ preserves finite products if and only if
(\ref{eqn:cartesian}) is bijective for each $f$.
Then, our first statement is followed by Lemma \ref{lem:equivariant}.

For the second statement, given $\iota : \catfam_\varsorts \to \bfC$, define $\tilde\iota : \catfam_\varsorts \to \bfE$ by $\tilde\iota(\pi_i) = \tilde\pi_i$.
\end{proof}

\begin{lemma}\label{lem:equivariant}
For any group homomorphism $f : G_1 \to G_2$ and $f$-equivariant map $x : X_1 \to X_2$ 	for sets $X_i$ with transitive and effective $G_i$-action, $x$ is bijective if and only if $f$ is an isomorphism.
\end{lemma}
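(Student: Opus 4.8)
The plan is to use the fact that a transitive and effective (i.e.\ free) action makes each $X_i$ a $G_i$-torsor, so that picking a basepoint identifies $X_i$ with $G_i$ and, under these identifications, turns $x$ into $f$ itself. Concretely, I would first note that $X_1$ is nonempty (a transitive action has a single orbit, hence a nonempty underlying set), so pick $a \in X_1$; then $X_2$ is nonempty too since it receives $x$. Because the $G_1$-action is transitive the orbit map $\phi_1 \colon G_1 \to X_1$, $g \mapsto g\cdot a$, is surjective, and because it is effective (free) $\phi_1$ is injective; so $\phi_1$ is a bijection. Likewise $\phi_2 \colon G_2 \to X_2$, $h \mapsto h\cdot x(a)$, is a bijection.

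Next I would record the compatibility of $x$ with these parametrizations. The $f$-equivariance of $x$ says $x(g\cdot a) = f(g)\cdot x(a)$ for all $g \in G_1$, which is exactly the identity $x\circ \phi_1 = \phi_2 \circ f$ of maps $G_1 \to X_2$. Rearranging, $x = \phi_2 \circ f \circ \phi_1^{-1}$. Since $\phi_1$ and $\phi_2$ are bijections, this exhibits $x$ as a bijection precisely when $f$ is a bijection. Finally, a bijective group homomorphism is automatically a group isomorphism (its set-theoretic inverse is again a homomorphism), so $x$ is bijective if and only if $f$ is an isomorphism, which is the claim.

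I do not expect a genuine obstacle here: the argument is the standard ``choose a basepoint in a torsor'' reduction. The only points needing a word of care are that ``effective'' is used in its strong sense, so that the orbit maps are injective and not merely surjective, and that the sets in play are nonempty; in the intended application (the orbit comparison between two fibers $p^{-1}(f)$ of a linear extension $D \to \bfE \xrightarrow{p} \bfC$) both hold, since $p$ is full and each fiber carries, by definition, a simply transitive action.
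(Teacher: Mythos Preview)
Your argument is correct: reading ``effective'' as ``free'' (which is necessary for the lemma to hold as stated, and which you explicitly flag), the basepoint identifications $\phi_1,\phi_2$ are bijections and the equivariance identity $x\circ\phi_1=\phi_2\circ f$ reduces the claim to the trivial observation that $x$ is bijective iff $f$ is. The paper does not give its own proof of this lemma but simply refers the reader to \cite[Lemma~3.5]{jp91}, so there is no in-paper argument to compare against; your torsor argument is the standard one.
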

For the proof, see \cite[Lemma 3.5]{jp91} for example.
\begin{theorem}\cite{jp06}
For any $\varsorts$-sorted Lawvere theory $\varlaw$, there is an equivalence of categories
\[
\mathbf{CNat}_\varlaw\simeq (\cattheories_\varsorts/\varlaw)_{\mathrm{ab}}.
\]
\end{theorem}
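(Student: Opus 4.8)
The plan is to obtain the equivalence as the restriction of a single comparison functor $\Phi\colon(\cattheories_\varsorts/\varlaw)_{\abobjs}\to\catnat_\varlaw$ that forgets the $\varsorts$-sorted structure, mirroring one level up the description of Beck modules over a small category recorded in Proposition~\ref{prop:nat-beck}. To build $\Phi$, note that an $\varsorts$-sorted Lawvere theory has an underlying small category with object set $\varsorts^*$, and that forgetting the finite products and the chosen projections defines a functor $\cattheories_\varsorts\to\catcat_{\varsorts^*}$; by the equational presentations of Proposition~\ref{prop:lawvere-alg} this is a forgetful functor between algebraic categories induced by an inclusion of presentations, hence it preserves all limits and admits a left adjoint. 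Therefore the induced functor on slices $\cattheories_\varsorts/\varlaw\to\catcat_{\varsorts^*}/\varlaw$ preserves finite products (which in a slice are pullbacks) and so carries abelian group objects to abelian group objects; composing with the equivalence $(\catcat_{\varsorts^*}/\varlaw)_{\abobjs}\simeq\catnat_\varlaw$ of Proposition~\ref{prop:nat-beck} yields $\Phi$.

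I would then show that $\Phi$ takes values in the full subcategory $\catcnat_\varlaw\subseteq\catnat_\varlaw$ of cartesian natural systems and is fully faithful onto it. Given an abelian group object $p\colon\bfE\to\varlaw$ in $\cattheories_\varsorts/\varlaw$ with $D:=\Phi(\bfE)$, the underlying object $p\colon\bfE\to\varlaw$ in $\catcat_{\varsorts^*}/\varlaw$ is an abelian group object, hence splits, so $\bfE$ is (as a category over $\varlaw$) the trivial linear extension $D\rtimes\varlaw$; in particular $D\to\bfE\xrightarrow{p}\varlaw$ is a linear extension. Since $p$ is a morphism of $\varsorts$-sorted Lawvere theories, $\bfE$ has finite products and $p$ preserves them, so Lemma~\ref{lem:cartnat} shows $D$ is cartesian. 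Faithfulness of $\Phi$ is immediate. For fullness, a morphism $\phi\colon D\to D'$ of (cartesian) natural systems corresponds to the functor $(f,\xi)\mapsto(f,\phi_f\xi)$ from $D\rtimes\varlaw$ to $D'\rtimes\varlaw$ over $\varlaw$; it sends the chosen lifts $0_{\pi_i}$ to $0_{\pi_i}$, and, because $\phi$ commutes with the transfer maps $\pi_{i*}$ and both $D,D'$ are cartesian, it commutes with the pairings $\langle-,-\rangle$ via the isomorphisms $D_{\langle f,g\rangle}\cong D_f\times D_g$; hence it is a morphism in $\cattheories_\varsorts$, and it visibly respects the transported abelian group structures.

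Finally, to see that $\Phi$ is essentially surjective onto $\catcnat_\varlaw$, start with a cartesian natural system $D$ on $\varlaw$ and form $\bfE=D\rtimes\varlaw$. Lemma~\ref{lem:cartnat} gives that $\bfE$ has finite products, $p$ preserves them, and there is $\tilde\iota\colon\catfam_\varsorts\to\bfE$ with $\tilde\iota(\pi_i)=0_{\pi_i}$ making $p$ a morphism of $\varsorts$-sorted Lawvere theories, so $p\colon\bfE\to\varlaw$ is an object of $\cattheories_\varsorts/\varlaw$. It then suffices to check that the abelian group structure carried by $D\rtimes\varlaw$ in $\catcat_{\varsorts^*}/\varlaw$ consists of morphisms in $\cattheories_\varsorts$: the zero section $f\mapsto(f,0_f)$, the fibrewise addition $\bfE\times_\varlaw\bfE\to\bfE$, and the fibrewise negation all fix the chosen projections and, again through the group isomorphisms $D_{\langle f,g\rangle}\cong D_f\times D_g$, commute with the pairings. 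This makes $p\colon\bfE\to\varlaw$ an abelian group object in $\cattheories_\varsorts/\varlaw$ with $\Phi(\bfE)\cong D$, and combining the three steps gives the asserted equivalence $\catcnat_\varlaw\simeq(\cattheories_\varsorts/\varlaw)_{\abobjs}$.

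The part I expect to be delicate is the bookkeeping common to the fullness and essential-surjectivity steps, namely checking that the pairing operation of the trivial linear extension and the structure maps of the abelian group object really are morphisms of $\varsorts$-sorted Lawvere theories. This is exactly where the cartesian hypothesis on $D$ is used, via the isomorphisms $D_{\langle f,g\rangle}\cong D_f\times D_g$, and one must confirm that these identifications are suitably natural with respect to composition in $\varlaw$ and to the fibrewise group operations; the remaining verifications reduce formally to Proposition~\ref{prop:nat-beck} and Lemma~\ref{lem:cartnat}.
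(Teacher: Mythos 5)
Your proposal is correct and follows essentially the same route as the paper: the paper states this theorem as a citation of \cite{jp06} without proof, but its proof of the directly analogous CCC statement (the equivalence $\catccnat_\bfC \simeq (\catccc_\varsorts/\bfC)_{\abobjs}$ in Subsection 5.3) is built on exactly your two constructions --- sending a cartesian natural system $D$ to the trivial linear extension $D\rtimes\varlaw$ with fibrewise addition, and sending an abelian group object $p$ to the natural system $f\mapsto p^{-1}(f)$, with Lemma~\ref{lem:cartnat} mediating between cartesianness of $D$ and product-preservation of $p$. Your write-up is somewhat more explicit about packaging this as a fully faithful, essentially surjective comparison functor obtained by forgetting down to $\catcat_{\varsorts^*}$ and invoking Proposition~\ref{prop:nat-beck}, but the substance is the same.
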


\subsection{Cartesian Closed Natural Systems}
In this subsection, we introduce a notion of \emph{cartesian closed natural systems} and show that the category of cartesian closed natural systems on $\bfC$ is equivalent to $(\catccc_\varsorts/\bfC)_{\mathrm{ab}}$.

Let $D$ be a cartesian natural system on a cartesian closed category $\bfC$.
We write $\ev{Y}{Z}$ for the evaluation map $Z^Y \times Y \to Z$.

Note that for $g: X \to W$ in $\bfC$ and an object $Y$, from the maps
\[
X \times Y \xrightarrow{\pi_1} X \xrightarrow{g} W
\]
we obtain a group homomorphism
\begin{align*}
D_g &\to D_{g\circ \pi_1}\\
\xi &\mapsto \pi_{1}^*\xi
\end{align*}
and, we write $\phi_g$ for the composed map 
\[
D_g \times D_{\pi_2} \xrightarrow{\pi_1^*\times 1_{D_{\pi_2}}} D_{g\circ \pi_1} \times D_{\pi_2} \xrightarrow{\sim} D_{g\times 1_Y}
\]
where $\pi_2 : X \times Y \to Y$.
Note that, for $f : X \times Y \to Z$, $\xi \in D_{\lambda f}$ and $\upsilon \in D_{\pi_2}$, $\ev{Y}{Z*}\phi_{\lambda f}(\xi,\upsilon)$ is an element of $D_{\ev{Y}{Z}\circ (\lambda f \times 1_Y)} = D_f$.
\begin{definition}
We say that a cartesian natural system $D$ is cartesian \emph{closed} if, for any $f: X\times Y \to Z$, the group homomorphism
\begin{align}\label{eqn:cartesianclosed}
\begin{split}
	D_{\lambda f} &\to D_{f}\\
	\xi &\mapsto \ev{Y}{Z*} \phi_f(\xi,0)
\end{split}
\end{align}
is an isomorphism.
\end{definition}

\begin{lemma}\label{lem:ccnat}
Let $\bfE \xrightarrow{p} \bfC$ is a linear extension by $D$.
Then $D$ is cartesian closed if and only if $\bfE$ is cartesian closed and $p$ is a cartesian closed functor.
Also, in that case, if a structure of an $\iota : \varsorts$-sorted CCC $\catcfam_\varsorts \to \bfC$ is given, then there exists $\tilde\iota : \catcfam_\varsorts \to \bfE$ such that $p$ is a morphism between $\varsorts$-sorted CCCs.
\end{lemma}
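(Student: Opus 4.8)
The plan is to follow the pattern of the proof of Lemma \ref{lem:cartnat}. In either direction of the claimed equivalence, Lemma \ref{lem:cartnat} already disposes of the product structure: a cartesian closed category has finite products, a cartesian closed functor preserves them, and a cartesian closed natural system is cartesian by definition. So I may assume from the outset that $D$ is cartesian, that $\bfE$ has finite products with a chosen lift $\tilde\pi_i\in p^{-1}(\pi_i)$ of each projection, and that $p$ preserves products; the remaining task is to match the exponential structure of $\bfE$ with condition \eqref{eqn:cartesianclosed}. First I would fix, for each pair of objects $Y,Z$, a lift $\tildeev{Y}{Z}\in p^{-1}(\ev{Y}{Z})$, observe that since $p$ is the identity on objects the only candidate for the exponential $[Y,Z]$ in $\bfE$ is $Z^Y$, and for each $f:X\times Y\to Z$ in $\bfC$ introduce the map
\[
\Phi_f: p^{-1}(\lambda f)\to p^{-1}(f),\qquad \tilde h\longmapsto \tildeev{Y}{Z}\circ(\tilde h\times 1_Y),
\]
where $\tilde h\times 1_Y=\ab<\tilde h\circ\tilde\pi_1,\tilde\pi_2>$ is formed in $\bfE$. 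This is well defined because $p$ preserves composition, products, identities and the chosen projections, so the image lies over $\ev{Y}{Z}\circ(\lambda f\times 1_Y)=f$.

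The core step is to prove that $\Phi_f$ is equivariant over the group homomorphism \eqref{eqn:cartesianclosed}, that is, over $D_{\lambda f}\to D_f$, $\xi\mapsto\ev{Y}{Z*}\phi_{\lambda f}(\xi,0)$, and then to invoke Lemma \ref{lem:equivariant}, which gives: $\Phi_f$ is a bijection if and only if \eqref{eqn:cartesianclosed} is an isomorphism. Concretely, from the composition law of a linear extension together with the way $\ab<-,->$ in $\bfE$ interacts with the $D$-actions one obtains $(\xi+\tilde h)\times 1_Y=\phi_{\lambda f}(\xi,0)+(\tilde h\times 1_Y)$ in $p^{-1}(\lambda f\times 1_Y)$, and composing with $\tildeev{Y}{Z}$ then yields $\Phi_f(\xi+\tilde h)=\ev{Y}{Z*}\phi_{\lambda f}(\xi,0)+\Phi_f(\tilde h)$. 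I expect this bookkeeping — tracking precisely how $\ab<-,->$ and $\circ$ in $\bfE$ behave with respect to the transitive effective $D$-actions, exactly as in the proof of Lemma \ref{lem:cartnat} — to be the only real obstacle; the rest is formal.

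Granting the equivariance, the two implications fall out. If $D$ is cartesian closed, then every $\Phi_f$ is bijective; given $\tilde f:X\times Y\to Z$ in $\bfE$ with $f=p\tilde f$, any $\tilde h$ in $\bfE$ satisfying $\tildeev{Y}{Z}\circ(\tilde h\times 1_Y)=\tilde f$ must have $p\tilde h=\lambda f$ by the universal property in $\bfC$, hence lies in $p^{-1}(\lambda f)$, so bijectivity of $\Phi_f$ furnishes a unique such $\tilde h$; thus $(Z^Y,\tildeev{Y}{Z})$ is an exponential in $\bfE$, so $\bfE$ is cartesian closed, and $p$ preserves evaluations (by the choice of $\tildeev{Y}{Z}$) and transposes (by the previous clause), i.e.\ $p$ is a cartesian closed functor. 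Conversely, if $\bfE$ is cartesian closed and $p$ is a cartesian closed functor, then since $p$ reflects isomorphisms (a property of linear extensions that follows from Lemma \ref{lem:equivariant}) the object $Z^Y$ may be taken as the exponential of $Y,Z$ in $\bfE$, with an evaluation map lying over $\ev{Y}{Z}$; choosing $\tildeev{Y}{Z}$ to be this map makes each $\Phi_f$ the restriction to $p^{-1}(\lambda f)$ of the (bijective) exponential transpose in $\bfE$, so $\Phi_f$ is bijective, \eqref{eqn:cartesianclosed} is an isomorphism, and $D$ is cartesian closed.

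For the last assertion, with $\tilde\pi_i$ as in Lemma \ref{lem:cartnat} and $\tildeev{Y}{Z}$ chosen as above, I would define $\tilde\iota:\catcfam_\varsorts\to\bfE$ on generators by $\tilde\iota(\pi_i)=\tilde\pi_i$ and $\tilde\iota(\ev{Y}{Z})=\tildeev{Y}{Z}$. Since $\catcfam_\varsorts=F_{\catccc_\varsorts}\emptyset$ is the free $\varsorts$-sorted CCC on the empty signature, this determines an identity on objects cartesian closed functor, and $p\circ\tilde\iota=\iota$ holds because both are identity on objects cartesian closed functors out of $\catcfam_\varsorts$ that agree on the generating projections and evaluation maps (equivalently, by initiality of $\catcfam_\varsorts$ among $\varsorts$-sorted CCCs). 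This exhibits $p$ as a morphism of $\varsorts$-sorted CCCs, as desired.
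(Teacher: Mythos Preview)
Your proposal is correct and follows essentially the same approach as the paper: reduce to the exponential part via Lemma~\ref{lem:cartnat}, choose lifts $\tildeev{Y}{Z}\in p^{-1}(\ev{Y}{Z})$, observe that the induced fibrewise map $p^{-1}(\lambda f)\to p^{-1}(f)$ is equivariant over the homomorphism~\eqref{eqn:cartesianclosed}, and conclude by Lemma~\ref{lem:equivariant}; the $\varsorts$-sorted structure on $\bfE$ is then obtained by setting $\tilde\iota(\ev{Y}{Z})=\tildeev{Y}{Z}$. You are in fact more explicit than the paper on two points it leaves to the reader: the equivariance computation $(\xi+\tilde h)\times 1_Y=\phi_{\lambda f}(\xi,0)+(\tilde h\times 1_Y)$, and the converse direction (where you argue that the exponential in $\bfE$ can be taken to be $Z^Y$ itself); note, however, that the latter does not really require that $p$ reflects isomorphisms, since $p$ is the identity on objects and hence any exponential object in $\bfE$ is literally equal to $Z^Y$ already.
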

\begin{proof}
By Lemma \ref{lem:cartnat}, it suffices to check that, for a cartesian $D$, $D$ is cartesian closed if and only if $\bfE$ has and $p$ preserves exponentials.
For each evaluation map $\ev{Y}{Z}: Z^Y \times Y \to Z$ in $\bfC$, choose an arbitrary morphism $\tilde{\mathrm{ev}}^{Y}_{Z}$ in $p^{-1}(\ev{Y}{Z})$.
Then, we have
\[
\begin{tikzcd}[column sep=17ex]
	\Hom_\bfE(X,Z^Y) \ar[r,"\tilde f \mapsto \tilde{\mathrm{ev}}_Z^Y \circ (\tilde f \times 1_Y)"] \ar[d,"p"] & \Hom_\bfE(X\times Y,Z) \ar[d,"p"]\\
	\Hom_\bfC(X,Z^Y) \ar[r,"\sim"] & \Hom_\bfC(X\times Y,Z).
\end{tikzcd}
\]
$\bfE$ has and $p$ preserves exponentials if and only if the map
\begin{align*}
	p^{-1}(f) &\to p^{-1}(\ev{Y}{Z}\circ (f\times 1_Y))\\
	\tilde f &\mapsto \tilde{\mathrm{ev}}^{Y}_{Z} \circ (\tilde f \times 1_Y)
\end{align*}
is bijective for each $f$.
This map is equivariant with respect to the group homomorphism (\ref{eqn:cartesianclosed}).
Then, our first statement is followed by Lemma \ref{lem:equivariant}.

For the second statement, 
let $\tilde\iota(\ev{Y}{Z}) = \tilde{\mathrm{ev}}^Y_Z$ for any evaluation map $\ev{Y}{Z}$ in $\catcfam_\varsorts$.
\end{proof}

\begin{theorem}
For any $\varsorts$-sorted CCC $\iota : \catcfam_\varsorts \to \bfC$, we have an equivalence of categories
\[
\Xi : \catccnat_\bfC \xrightarrow{\sim} (\catccc_\varsorts/\bfC)_{\mathrm{ab}}.
\]
\end{theorem}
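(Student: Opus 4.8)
The plan is to mimic the structure of the Dwyer--Kan and Jibladze--Pirashvili equivalences \cite{jp05,jp06}, using the linear-extension characterization already established in Lemma \ref{lem:ccnat}. Recall that a Beck module over $\bfC \in \Ob(\catccc_\varsorts)$ is an abelian group object in the slice $\catccc_\varsorts/\bfC$. The key observation is that a morphism $q : \bfE \to \bfC$ in $\catccc_\varsorts$ that is identity on objects and whose underlying functor is a split linear extension $D \to \bfE \xrightarrow{q} \bfC$ by a natural system $D$ carries exactly the data of an abelian group object in $\catccc_\varsorts/\bfC$ when $D$ is cartesian closed: the zero section is the splitting $s : \bfC \to \bfE$, the addition $\bfE \times_\bfC \bfE \to \bfE$ is induced fiberwise by the group structure on the $D_f$'s, and the inverse likewise. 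Conversely, given an abelian group object $(\bfE, q, s, +, -)$ in $\catccc_\varsorts/\bfC$, the standard argument (as in \cite{jp05} for $\catcat_O$) shows $q$ is automatically a split linear extension: one defines $D_f$ to be the fiber $q^{-1}(s_\bullet f)$ with the group structure transported from the abelian object structure, checks functoriality in $f$ over $\catfact\bfC$, and checks the linear-extension identity. The key point is that the forgetful functor $\catccc_\varsorts/\bfC \to \catcat_{\Ty_\varsorts}/\bfC$ creates the relevant limits (because products in $\catccc_\varsorts/\bfC$ are computed as in $\catcat_{\Ty_\varsorts}/\bfC$, both being algebraic slices), so the underlying natural system of a Beck module over $\bfC$ as an $\varsorts$-sorted CCC coincides with its underlying natural system as a category; what the CCC structure adds is precisely the constraint recorded by Lemma \ref{lem:ccnat}, namely that $D$ is cartesian closed.

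Concretely, I would proceed as follows. First, define $\Xi$ on objects: given a cartesian closed natural system $D$ on $\bfC$, form the trivial linear extension $D \rtimes \bfC$; by Lemma \ref{lem:ccnat} it is a CCC and $p : D \rtimes \bfC \to \bfC$ is a cartesian closed functor, and moreover the lemma supplies a lift $\tilde\iota : \catcfam_\varsorts \to D\rtimes\bfC$ making $p$ a morphism in $\catccc_\varsorts$; the splitting $s$ (the zero section), the fiberwise addition, and the fiberwise negation make $(D\rtimes\bfC, p)$ an abelian group object in $\catccc_\varsorts/\bfC$ --- one checks the group-object axioms diagrammatically, which reduces to the abelian group axioms in each $D_f$ plus naturality. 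Second, define $\Xi$ on morphisms of natural systems in the evident way (a natural transformation $D \to D'$ induces a map $D\rtimes\bfC \to D'\rtimes\bfC$ over $\bfC$ respecting the group structure), and check functoriality. Third, construct a quasi-inverse $\Psi$: given an abelian group object $(\bfE,q,s,+,-)$ in $\catccc_\varsorts/\bfC$, set $(\Psi\bfE)_f = q^{-1}(s(f))$ for $f \in \Hom_\bfC(X,Y)$, with group structure induced by $+$ and $-$; functoriality over $\catfact\bfC$ comes from composition in $\bfE$; then invoke Lemma \ref{lem:ccnat} (whose hypothesis that $q$ is a linear extension is verified using the group-object structure exactly as in \cite{jp05}) to conclude $\Psi\bfE$ is cartesian closed. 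Fourth, exhibit natural isomorphisms $\Psi\Xi \cong \Id$ and $\Xi\Psi \cong \Id$: the first is essentially the identity; the second says every abelian group object in $\catccc_\varsorts/\bfC$ is equivalent, as such, to a trivial linear extension, which follows because an abelian group object in a slice always has a zero section, hence the underlying extension splits, hence is equivalent to the trivial one --- and this equivalence respects the CCC structure by Lemma \ref{lem:ccnat}.

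The main obstacle I expect is the verification that the underlying-category functor $\catccc_\varsorts/\bfC \to \catcat_{\Ty_\varsorts}/\bfC$ behaves well with respect to abelian group objects --- specifically, that an abelian group object in the CCC slice is the same as an abelian group object in the category slice whose underlying natural system happens to be cartesian closed. One direction is immediate (forget structure), but the other requires knowing that the CCC operations $\langle -,-\rangle$, $\lambda(-)$, and composition on $\bfE$ are uniquely determined and automatically compatible with the group structure; this is where Lemma \ref{lem:ccnat} does the real work, since it already says that a linear extension of a CCC by a cartesian closed natural system is canonically a CCC with $p$ cartesian closed, and that the sorted structure lifts. The remaining subtlety is that $\Xi$ as defined lands a priori in abelian group objects of $\catccc_\varsorts/\bfC$ rather than Beck modules per se, but these coincide by definition, so no extra argument is needed. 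The rest --- checking the group-object axioms, naturality of the unit and counit --- is routine diagram-chasing of the same flavor as in \cite{jp05,jp06}, and I would present it compactly, emphasizing only the points where the cartesian closed structure (as opposed to mere finite products) intervenes.
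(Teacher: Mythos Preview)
Your proposal is correct and follows essentially the same approach as the paper: construct $\Xi$ via the trivial linear extension $D \rtimes \bfC$ equipped with its fiberwise abelian group structure, construct the quasi-inverse by setting $D(p)_f = p^{-1}(f)$ (you wrote $q^{-1}(s(f))$, but you mean $q^{-1}(f)$) with the action $D(p)(a,b)(\tilde f) = 0_b\circ\tilde f\circ 0_a$, and invoke Lemma~\ref{lem:ccnat} to handle the cartesian closed condition in both directions. Your outline is, if anything, more thorough than the paper's own proof, which gives the two object-level constructions and the explicit inverses to \eqref{eqn:cartesian} and \eqref{eqn:cartesianclosed} but omits the unit/counit verification you sketch.
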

\begin{proof}
Let $D$ be a cartesian closed natural system.
The trivial linear extension $D \rtimes \bfC$ is cartesian closed by Lemma \ref{lem:ccnat} and define $\iota : \catcfam_\varsorts \to \bfC$ by $\iota(\Id)= 0$, $\iota(\pi_i) = 0$, $\iota(\ev{Y}{Z}) = 0$ where 0 is the zero in $D_f$ for $f = \iota(\Id),\iota(\pi_i),\iota(\ev{Y}{Z})$.
The addition $ D\rtimes \bfC \times_\bfC D \rtimes \bfC \to D \rtimes \bfC$ is defined as the addition in $D_f$s, and we can check that this provides an abelian group structure on $D \rtimes \bfC$.

Conversely, let $\bfE \xrightarrow{p} \bfC$ be an internal abelian group in $\catccc_\varsorts/\bfC$.
Define a natural system $D(p)$ on $\bfC$ as $D(p)_f = p^{-1}(f)$ and $D(p)(a,b)(\tilde f) = 0_a \tilde f 0_b$ where $0_g$ is the zero in $D(p)_g = p^{-1}(g)$.
$D(p)$ is cartesian since
\[
D(p)_{\pi_1\circ f}\times \dots \times D(p)_{\pi_n \circ f} \ni \ab(\tilde f_1,\dots,\tilde f_n) \mapsto \ab< \tilde f_1,\dots, \tilde f_n > \in D(p)_f
\]
is the inverse of (\ref{eqn:cartesian}).
Also, $D(p)$ is cartesian closed since
\[
D(p)_{\mathrm{ev}_{Y,Z}\circ (f\times 1_Y)} \ni \tilde g \mapsto \lambda \tilde g \in D(p)_f
\]
is the inverse of (\ref{eqn:cartesianclosed}).
\end{proof}

\begin{definition}
Let $D$ be a cartesian closed natural system on an $\varsorts$-sorted CCC $\bfC$.
We define $\Der^{\catccc_\varsorts}(\bfC; D)$ as the abelian group of all morphisms $s : \bfC \to D \rtimes \bfC$ in $\catccc_\varsorts$ such that $ps = \Id_\bfC$ where $p : D \rtimes \bfC \to \bfC$ is the canonical projection.
\end{definition}

\begin{lemma}\label{lem:der-desc}
For a cartesian closed natural system $D$ on an $\varsorts$-sorted CCC $\iota : \catcfam_\varsorts \to \bfC$, there is an isomorphism
\[
\Der^{\catccc_\varsorts}(\bfC; D) \cong \ab\{d \in \prod_{f \in \Mor(\bfC)}D_f \middle\mid d(f\circ g) = f_*d(g) + g^*d(f),~d(\iota(\pi_i)) = d(\iota(\ev{Y}{Z}))=0\}.
\]
\end{lemma}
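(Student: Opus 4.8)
The plan is to view a section $s$ of the projection $p\colon D\rtimes\bfC\to\bfC$ in $\catccc_\varsorts$ as a family $d=(d(f))_{f\in\Mor(\bfC)}$ with $d(f)\in D_f$, and then to read off, operation by operation, what ``$s$ is a morphism of $\varsorts$-sorted CCCs with $ps=\Id_\bfC$'' demands of $d$. Morphisms in $\catccc_\varsorts$ are identity on objects (the object set $\Ty_\varsorts$ is not part of the varying data), so $ps=\Id_\bfC$ forces, for each $f\colon X\to Y$, the element $s(f)\in\Hom_{D\rtimes\bfC}(X,Y)=\coprod_{h\colon X\to Y}D_h$ to lie in the summand $D_f$. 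Hence, writing $d(f):=s(f)$, the datum of $s$ is precisely a family $d\in\prod_{f}D_f$; conversely any such family defines $s$ by $s(X)=X$ and $s(f)=d(f)$. It remains to determine which families yield actual morphisms in $\catccc_\varsorts$.

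Since $\catccc_\varsorts$ is algebraic, $s$ is a morphism there precisely when the underlying family of hom-set maps commutes with all the operations: $\circ$, $\Id$, the specified projections, the specified evaluation maps, pairing $\ab<f_1,f_2>$ and currying $\lambda f$. Unwinding the composition law of $D\rtimes\bfC$, $\xi\circ\eta=f_*\eta+g^*\xi$ for $\xi\in D_f$ and $\eta\in D_g$, the condition that $s$ commutes with $\circ$ is exactly the derivation identity $d(f\circ g)=f_*d(g)+g^*d(f)$; commuting with $\Id$ gives $d(\Id_X)=0$, which in any case follows from the derivation identity applied to $\Id_X\circ\Id_X$. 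By the construction of the $\varsorts$-sorted CCC structure on $D\rtimes\bfC$ used in the proof of the equivalence $\catccnat_\bfC\simeq(\catccc_\varsorts/\bfC)_\abobjs$, the specified projections and evaluations of $D\rtimes\bfC$ are the zero elements of $D_{\iota(\pi_i)}$ and $D_{\iota(\ev{Y}{Z})}$; hence ``$s$ commutes with the specified projections and evaluations'' unwinds to exactly $d(\iota(\pi_i))=0$ and $d(\iota(\ev{Y}{Z}))=0$, the normalizing conditions in the statement.

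The step requiring care is that, for an $s$ that is already a functor and a section of $p$, commuting with pairing and currying — and with the unique maps into the terminal object — is automatic once it commutes with the specified projections and evaluation maps. Here I would use that $D\rtimes\bfC$ is cartesian closed (Lemma~\ref{lem:ccnat}), with $1$ genuinely terminal because cartesianness of $D$ at $n=0$ forces $D_{!_X}=0$: in both $\bfC$ and $D\rtimes\bfC$ the morphisms $\ab<f_1,f_2>$, $\lambda f$ and $!_X$ are the \emph{unique} ones satisfying their defining equations, and those equations are built from $\circ$, $\Id$, the specified projections and evaluations (recall $f\times g=\ab<f\circ\pi_1,g\circ\pi_2>$ and $\ev{Y}{Z}\circ(\lambda f\times\Id_Y)=f$). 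Applying $s$ and invoking uniqueness in $D\rtimes\bfC$ yields $s(\ab<f_1,f_2>)=\ab<s(f_1),s(f_2)>$, $s(\lambda f)=\lambda(s(f))$ and $s(!_X)=!_X$. Thus $s\mapsto d$ restricts to a bijection between $\Der^{\catccc_\varsorts}(\bfC;D)$ and the set displayed in the statement, and it is additive because the group operation on $\Der^{\catccc_\varsorts}(\bfC;D)$ is the one inherited from the abelian group object $D\rtimes\bfC$ over $\bfC$, i.e.\ fibrewise addition in the $D_f$, which is exactly the pointwise addition on the right-hand side. I expect this third step to be the only part of the argument that is not routine bookkeeping.
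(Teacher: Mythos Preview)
Your argument is correct and follows the same approach as the paper: identify a section $s$ with the family $d(f)=s(f)\in D_f$ and read off the conditions from the $\catccc_\varsorts$-structure on $D\rtimes\bfC$. The paper's proof is much terser and only checks necessity of the conditions; your careful verification of sufficiency---that preservation of pairing, currying, and terminal maps is automatic via uniqueness in the cartesian closed category $D\rtimes\bfC$ once $s$ is a functor sending specified projections and evaluations to zero---spells out a step the paper leaves implicit.
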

\begin{proof}
Any $s \in \Der^{\catccc_\varsorts}(\bfC;D)$ can written as $\Hom_\bfC(X,Y) \ni f \mapsto (df,f) \in \Hom_{D\rtimes \bfC}(X,Y)$, and since $s$ preserves compositions, the first equation is obtained.
The second and the last equations are derived from $s(\iota(\pi_i)) = \tilde\iota(\pi_i)=0$ and $s(\iota(\ev{Y}{Z}))=\tilde\iota(\ev{Y}{Z})=0$.
\end{proof}

\section{Baues--Wirsching cohomology}

Let $\bfC$ be a small category and $D$ be a natural system on $\bfC$.

For $n > 0$, we define $C^n_\BW(\bfC;D)$ as the abelian group of all functions
\[
f: N_n(\bfC) \to \bigcup_{g \in \Mor(\bfC)}D_g
\]
such that $f(\lambda_1,\dots,\lambda_n) \in D_{\lambda_1\dots\lambda_n}$.
Here, $N(\bfC)$ is the nerve of $\bfC$.

For $n=0$, let $C^0_\BW(\bfC;D)$ be the abelian group of all functions
\[
f: N_0(\bfC) = \Ob(\bfC) \to \bigcup_{A \in \Ob(\bfC)}D_A
\]
such that $f(A) \in D_A$ where $D_A = D_{1_A}$.
The addition in $C^n_\BW$ is given by pointwise addition in $D_f$s.

Define the coboundary map $\delta : C^{n}_\BW \to C^{n+1}_\BW$ as, for $n=0$,
\[
(\delta f)(\lambda) = \lambda_*f(A) - \lambda^*f(B)\quad (\lambda: A \to B \in N_0(\bfC))
\]
and for $n > 1$,
\[
(\delta f)(\lambda_1,\dots,\lambda_n) = \lambda_{1*}f(\lambda_2,\dots,\lambda_n)
+ \sum_{i=1}^{n-1} (-1)^if(\lambda_1,\dots,\lambda_i\lambda_{i+1},\dots,\lambda_n)
+ (-1)^n \lambda^*_n f(\lambda_1,\dots,\lambda_{n-1}).
\]
We can check $\delta\delta = 0$, so $(C^n_\BW(\bfC;D),\delta)$ forms a cochain complex.
\begin{definition}\cite{baues85}
The $n$-th \emph{Baues--Wirsching cohomology group} $H^n_\BW(\bfC;D)$ is the $n$-th cohomology group of the cochain complex $(C^\bullet_\BW(\bfC;D),\delta)$.
\end{definition}
It is known that the Baues--Wirsching cohomology is invariant under equivalences of categories in the following sense.
\begin{proposition}\cite{baues85}\label{prop:bw-invariant}
For any two small categories $\bfC$, $\bfC'$ with equivalence $\phi : \bfC \to \bfC'$ and a natural system on $\bfC'$, $\phi$ induces an isomorphism
\[
H^n_\BW(\bfC; \phi^*D) \cong H^n_\BW(\bfC'; D)
\]
for any $n \ge 0$ where $\phi^*D$ is the natural system given by $\phi^*D_f = D_{\phi(f)}$, $a_* = \phi(a)_*$, $b^* = \phi(b)^*$ for $f,a,b \in \Mor(\bfC)$.
\end{proposition}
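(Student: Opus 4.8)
The plan is to reduce the proposition to two facts about the Baues--Wirsching cochain complexes and then finish by a formal argument with a quasi-inverse of $\phi$.

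First I would record \emph{cochain-level functoriality}: any functor $G\colon\bfD\to\bfC$ and natural system $D$ on $\bfC$ produce a cochain map
\[
G^{*}\colon C^{\bullet}_{\BW}(\bfC;D)\longrightarrow C^{\bullet}_{\BW}(\bfD;G^{*}D),\qquad (G^{*}c)(\mu_{1},\dots,\mu_{n})=c(G\mu_{1},\dots,G\mu_{n}),
\]
where $G^{*}D$ is the pulled-back natural system $(G^{*}D)_{f}=D_{Gf}$ with operations transported along $G$. The verification that $G^{*}$ commutes with $\delta$ uses only that $G$ preserves composition and identities, and one immediately gets $\Id^{*}=\Id$ and $(G'G)^{*}=G^{*}\circ(G')^{*}$.

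Second --- and this is the crux --- I would prove \emph{homotopy invariance under natural isomorphism}. If $\theta\colon F\Rightarrow F'$ is a natural isomorphism of functors $\bfD\to\bfC$, then for each morphism $f$ of $\bfD$ the pair $(\theta_{\dom f}^{-1},\theta_{\cod f})$ is a morphism $Ff\to F'f$ in $\catfact\bfC$, so applying $D$ yields an isomorphism of natural systems $\theta_{\sharp}\colon F^{*}D\xrightarrow{\ \sim\ }(F')^{*}D$ on $\bfD$; I claim that $F^{*}$ and $\theta_{\sharp}^{-1}\circ(F')^{*}$, as cochain maps $C^{\bullet}_{\BW}(\bfC;D)\to C^{\bullet}_{\BW}(\bfD;F^{*}D)$, are chain homotopic. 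The homotopy should be the classical ``prism'' operator for the nerve, decorated with the coefficient data: writing $X_{0},\dots,X_{n-1}$ for the objects traversed by a composable chain $(\mu_{1},\dots,\mu_{n-1})$ in $\bfD$,
\[
(sc)(\mu_{1},\dots,\mu_{n-1})=\sum_{j=0}^{n-1}(-1)^{j}\,(\theta_{X_{n-1}}^{-1})_{*}\,c\!\bigl(F\mu_{1},\dots,F\mu_{j},\,\theta_{X_{j}},\,F'\mu_{j+1},\dots,F'\mu_{n-1}\bigr),
\]
where naturality of $\theta$ makes each summand, after the indicated pushforward, land in $(F^{*}D)_{\mu_{1}\cdots\mu_{n-1}}$. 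Computing $\delta s+s\delta$, the ``inner'' summands cancel in pairs exactly as in the simplicial identity for $N(\bfC)$, and the two surviving extreme terms recover $F^{*}$ and $\theta_{\sharp}^{-1}\circ(F')^{*}$; the degree-zero case, where $s$ lands in $0$, reads $s\delta=F^{*}-\theta_{\sharp}^{-1}\circ(F')^{*}$ and is checked straight from $(\delta c)(\lambda)=\lambda_{*}c-\lambda^{*}c$. The main obstacle is precisely this verification: threading the pushforwards $a_{*}$ and pullbacks $b^{*}$ (which commute with one another and compose as functoriality of $D$ on $\catfact\bfC$ dictates) correctly through all the terms is where care is needed, but everything else is formal. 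The consequence is that naturally isomorphic functors induce, after the coefficient identification $\theta_{\sharp}$, the same map on $H^{\bullet}_{\BW}(\bfC;-)$; in particular, any endofunctor $G$ of $\bfC$ with $G\cong\Id_{\bfC}$ induces an isomorphism $H^{n}_{\BW}(\bfC;E)\xrightarrow{\ \sim\ }H^{n}_{\BW}(\bfC;G^{*}E)$ for every natural system $E$, and similarly over $\bfC'$.

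Finally I would assemble: choose a quasi-inverse $\psi\colon\bfC'\to\bfC$ of $\phi$ with natural isomorphisms $\psi\phi\cong\Id_{\bfC}$ and $\phi\psi\cong\Id_{\bfC'}$. By the functoriality above, $\phi^{*}\circ\psi^{*}=(\psi\phi)^{*}$ and $\psi^{*}\circ\phi^{*}=(\phi\psi)^{*}$ as cochain maps between the relevant complexes (using $\phi^{*}\psi^{*}E=(\psi\phi)^{*}E$, etc.), and by homotopy invariance both $(\psi\phi)^{*}$ and $(\phi\psi)^{*}$ are isomorphisms on cohomology for every coefficient system. Taking $E=\phi^{*}D$ in the first shows $\psi^{*}\colon H^{n}_{\BW}(\bfC;\phi^{*}D)\to H^{n}_{\BW}(\bfC';\psi^{*}\phi^{*}D)$ is a split monomorphism, while the second shows the very same map is a split epimorphism; hence it is an isomorphism, and then $\phi^{*}=(\psi^{*})^{-1}\circ(\psi^{*}\phi^{*})$ is an isomorphism on cohomology, as claimed. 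As a sanity check and alternative route, the statement also follows from the known identification of $H^{\bullet}_{\BW}(\bfC;D)$ with the higher limits $\varprojlim^{\bullet}_{\catfact\bfC}D$ of $D\colon\catfact\bfC\to\catab$, together with the observation that an equivalence $\phi\colon\bfC\to\bfC'$ induces an equivalence $\catfact\bfC\to\catfact\bfC'$ carrying $D$ to $\phi^{*}D$, since higher limits are invariant under equivalence of the indexing category.
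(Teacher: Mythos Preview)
The paper does not give its own proof of this proposition: it is stated with a citation to Baues--Wirsching \cite{baues85} and no argument follows. So there is nothing in the paper to compare your proposal against.

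That said, your proposal is correct and is essentially the original argument from \cite{baues85}. The three ingredients you isolate---cochain-level functoriality of $G\mapsto G^{*}$, a prism-type chain homotopy showing that naturally isomorphic functors induce the same map on $H^{\bullet}_{\BW}$ (after the coefficient identification $\theta_{\sharp}$), and the formal splitting argument with a quasi-inverse $\psi$---are exactly what is used there. Your bookkeeping in the final paragraph is right: $(\psi\phi)^{*}=\phi^{*}\circ\psi^{*}$ and $(\phi\psi)^{*}=\psi^{*}\circ\phi^{*}$ on cochains, and from the two isomorphisms you correctly extract that the single map $\psi^{*}\colon H^{n}_{\BW}(\bfC;\phi^{*}D)\to H^{n}_{\BW}(\bfC';\psi^{*}\phi^{*}D)$ is both split mono and split epi. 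One small point to watch in the explicit homotopy formula: with the paper's convention $(\lambda_{1},\dots,\lambda_{n})\mapsto\lambda_{1}\circ\cdots\circ\lambda_{n}$, composability of the inserted chain $(F\mu_{1},\dots,F\mu_{j},\theta_{X_{j}},F'\mu_{j+1},\dots)$ forces the inserted arrow to go $F'X_{j}\to FX_{j}$, i.e.\ it is $\theta_{X_{j}}^{-1}$ rather than $\theta_{X_{j}}$ if $\theta\colon F\Rightarrow F'$; likewise the outer correction should be a pullback along $\theta_{X_{n-1}}$ (or a pushforward along $\theta_{X_{0}}^{-1}$) rather than a pushforward at the domain end. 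You already flag this as ``where care is needed,'' and it does not affect the validity of the strategy. Your alternative route via $H^{n}_{\BW}(\bfC;D)\cong\varprojlim^{n}_{\catfact\bfC}D$ and the fact that an equivalence $\phi$ induces an equivalence $\catfact\bfC\simeq\catfact{\bfC'}$ is also a perfectly good (and shorter) proof.
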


It is known that $H^2_\BW(\bfC;D)$ classifies linear extensions of $\bfC$ by $D$.
\begin{proposition}\cite{baues85}
Let $M(\mathbf C; D)$ be the set of equivalence classes of linear extensions of $\bfC$ by $D$.
There is a natural bijection $M(\bfC; D) \cong H^2_\BW(\mathbf C; D)$ that maps the trivial linear extension to the zero in $H^2_\BW(\mathbf C; D)$.
\end{proposition}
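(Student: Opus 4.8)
The plan is to run the classical ``section plus $2$-cocycle'' argument --- as in the Schreier classification of group extensions or the classification of square-zero algebra extensions --- transported to linear extensions of a small category. First I would define the map $M(\bfC;D)\to H^2_\BW(\bfC;D)$. Given a linear extension $D\to\bfE\xrightarrow{p}\bfC$, choose for every morphism $f$ of $\bfC$ a lift $\widetilde f\in p^{-1}(f)$, with $\widetilde{1_X}=1_X$. Since $p$ is a functor, $\widetilde f\circ\widetilde g$ and $\widetilde{fg}$ both lie in $p^{-1}(fg)$, so by transitivity and effectiveness of the $D_{fg}$-action there is a unique $c(f,g)\in D_{fg}$ with $\widetilde f\circ\widetilde g=c(f,g)+\widetilde{fg}$, giving $c\in C^2_\BW(\bfC;D)$. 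Expanding $(\widetilde f\circ\widetilde g)\circ\widetilde h=\widetilde f\circ(\widetilde g\circ\widetilde h)$ by means of the distributivity law $(\xi+\widetilde f)\circ(\eta+\widetilde g)=f_*\eta+g^*\xi+\widetilde f\circ\widetilde g$ and cancelling $\widetilde{fgh}$ yields precisely $\delta c=0$. Replacing the section $\widetilde f$ by $b(f)+\widetilde f$ for $b\in C^1_\BW(\bfC;D)$ replaces $c$ by $c+\delta b$, so the class $[c]$ does not depend on the lifts; and transporting a section of $\bfE$ along an equivalence of extensions $\epsilon\colon\bfE\to\bfE'$ produces the identical cocycle, so $[\bfE]\mapsto[c]$ is a well-defined function on $M(\bfC;D)$. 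For the trivial extension $D\rtimes\bfC$ the section $f\mapsto 0\in D_f$ gives $c=0$, so it goes to $0$.

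Next I would build the inverse. From a $2$-cocycle $c$, form the category $\bfE_c$ with the objects of $\bfC$, with $\Hom_{\bfE_c}(X,Y)=\coprod_{f}D_f$, and composition $(\xi,f)\circ(\eta,g)=(f_*\eta+g^*\xi+c(f,g),fg)$; associativity is the cocycle condition. The only delicate point is the existence of identities: feeding $\delta c=0$ the degenerate triples $(f,1_X,1_X)$ and $(1_Y,1_Y,f)$ shows $(-c(1_X,1_X),1_X)$ is a two-sided unit at $X$, so no normalization of $c$ is needed. With $p(\xi,f)=f$ and the $D_f$-action $\zeta+(\xi,f)=(\zeta+\xi,f)$ on $p^{-1}(f)$, one checks directly that $\bfE_c\xrightarrow{p}\bfC$ is a linear extension whose cocycle, for the section $f\mapsto(0,f)$, is $c$; hence the forward map is onto. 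For injectivity, if $\bfE,\bfE'$ give cohomologous cocycles, adjust the section of $\bfE'$ by a $1$-cochain $b$ so that the two cocycles agree (here $b$ vanishes on identities automatically, since $(\delta b)(1_X,1_X)=b(1_X)$ must equal $c'(1_X,1_X)-c(1_X,1_X)=0$); then $\xi+\widetilde f\mapsto\xi+\widetilde f{}'$ is an isomorphism of categories, identical on objects and compatible with the projections and the $D$-actions, hence an equivalence of extensions. Naturality in $D$ (a morphism of natural systems acts on the left by pushout of extensions and on the right through functoriality of $H^2_\BW$) is then a routine check.

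I do not expect a genuine obstacle; the argument is standard bookkeeping. The points most requiring care are keeping the two variances $f_*,g^*$ and the alternating signs in step with the coboundary formula defining $H^2_\BW$, and the existence of identities in $\bfE_c$, which is the single place where the cocycle identity on degenerate simplices must really be used rather than the naive normalized-cocycle shortcut.
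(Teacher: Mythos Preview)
Your argument is the classical Schreier-type proof and is correct; the paper itself does not supply a proof of this proposition but merely cites \cite{baues85}, where exactly this section/cocycle construction is carried out. So there is nothing to compare beyond noting that your write-up matches the original reference.
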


The Baues--Wirsching cohomology of $\bfC$ can be written as an Ext over $\catnat_\bfC = \catab^{\catfact{\bfC}}$.
Let $\cZ_\bfC$ be the natural system on $\bfC$ such that for each morphism $f : X \to Y$ in $\bfC$, $(\cZ_\bfC)_f$ is the free abelian group generated by $f$, and for each $a : X' \to X$, $b : Y \to Y$, $a^* : (\cZ_\bfC)_f \to (\cZ_\bfC)_{fa}$ and $b_* : (\cZ_\bfC)_f \to (\cZ_\bfC)_{bf}$ are the isomorphisms sending the generator $f$ to the generators $a^* f$ and $b_*f$, respectively.

\begin{proposition}\cite{baues85}\label{prop:bwext}
For any natural system $D$ on $\bfC$, there is an isomorphism
$H^n_\BW(\bfC;D) \cong \Ext_{\catnat_\bfC}^n(\cZ_\bfC,D)$.
\end{proposition}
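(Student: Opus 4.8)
The plan is to exhibit an explicit projective resolution $F_\bullet \to \cZ_\bfC$ in $\catnat_\bfC = \catab^{\catfact\bfC}$ for which the cochain complex $\Hom_{\catnat_\bfC}(F_\bullet, D)$ is, on the nose, the Baues--Wirsching complex $C^\bullet_\BW(\bfC;D)$; the asserted isomorphism then follows since $\Ext^\bullet_{\catnat_\bfC}(\cZ_\bfC, D)$ may be computed from any projective resolution of $\cZ_\bfC$. The facts I would invoke about the functor category $\catnat_\bfC = \catab^{\catfact\bfC}$ are: it is abelian; a sequence of natural systems is exact iff it is exact after evaluation at each object $g \in \Ob(\catfact\bfC) = \Mor(\bfC)$; and for each such $g$ the representable $P_g := \bbZ[\Hom_{\catfact\bfC}(g,-)]$ is projective with $\Hom_{\catnat_\bfC}(P_g, D) \cong D_g$ naturally in $D$ (Yoneda). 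Consequently every coproduct $\bigoplus_\alpha P_{g_\alpha}$ is projective with $\Hom_{\catnat_\bfC}(\bigoplus_\alpha P_{g_\alpha}, D) \cong \prod_\alpha D_{g_\alpha}$.

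Next I would set, for $n \ge 0$,
\[
F_n = \bigoplus_{(\lambda_1,\dots,\lambda_n) \in N_n(\bfC)} P_{\lambda_1\cdots\lambda_n}
\]
(so $F_0 = \bigoplus_{A \in \Ob(\bfC)} P_{1_A}$), with augmentation $\epsilon\colon F_0 \to \cZ_\bfC$ sending the Yoneda generator of the summand $P_{1_A}$ to the canonical generator of $(\cZ_\bfC)_{1_A}$, and with differential $\partial\colon F_n \to F_{n-1}$ given on the Yoneda generator of $P_{\lambda_1\cdots\lambda_n}$ by the sum of: (i) the class, in the summand $P_{\lambda_2\cdots\lambda_n}$, of the factorization morphism $(1,\lambda_1)$, which realizes the operator $\lambda_{1*}$; (ii) for $1 \le i \le n-1$, $(-1)^i$ times the Yoneda generator of the summand $P_{\lambda_1\cdots\lambda_i\lambda_{i+1}\cdots\lambda_n}$, which has the same composite; and (iii) $(-1)^n$ times the class, in the summand $P_{\lambda_1\cdots\lambda_{n-1}}$, of the factorization morphism $(\lambda_n, 1)$, realizing $\lambda_n^*$. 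A direct check gives $\partial^2 = 0$ and $\epsilon\partial = 0$. By Yoneda, $\Hom_{\catnat_\bfC}(F_n, D) \cong \prod_{(\lambda_1,\dots,\lambda_n)} D_{\lambda_1\cdots\lambda_n} = C^n_\BW(\bfC;D)$, and since the three clauses above mirror the three parts of the Baues--Wirsching coboundary, the map induced by $\partial$ is exactly $\delta$; the degree-zero case is verified by hand and reproduces $(\delta f)(\lambda) = \lambda_* f(A) - \lambda^* f(B)$.

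It remains to verify that $F_\bullet \to \cZ_\bfC$ is a resolution. Projectivity is immediate, so only exactness is at issue, and by the pointwise criterion it suffices, for each $g\colon A \to B$, to prove the complex of abelian groups $(F_\bullet)_g \to (\cZ_\bfC)_g$ is exact. Unwinding the definitions, $(F_n)_g$ is free on the composable $(n+2)$-tuples of morphisms of $\bfC$ with composite $g$, the differential is the alternating sum of the $n+1$ maps that compose two adjacent entries, and $\epsilon$ is the sum-of-coefficients map $(F_0)_g \to \bbZ$. This is a standard bar-type complex, contracted by the homotopy $h$ that appends $1_A$ at the end of a tuple (with sign $(-1)^{n+1}$) together with $1 \mapsto (g, 1_A)$ in degree $-1$, the identity $\partial h + h\partial = \mathrm{id}$ being the usual extra-degeneracy computation. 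Hence $(F_\bullet)_g$ is contractible, $F_\bullet \to \cZ_\bfC$ is a projective resolution, and
\[
\Ext^n_{\catnat_\bfC}(\cZ_\bfC, D) \cong H^n\bigl(\Hom_{\catnat_\bfC}(F_\bullet, D)\bigr) = H^n\bigl(C^\bullet_\BW(\bfC;D)\bigr) = H^n_\BW(\bfC;D).
\]
The genuinely delicate point, and where I expect to spend the most care, is the bookkeeping that matches the differential on $F_\bullet$ with $\delta$: keeping straight the orientation of morphisms in $\catfact\bfC$, the distinction between $a^*$ and $b_*$, and the signs. Once that dictionary is fixed, $\partial^2 = 0$, the identification of $\Hom(\partial, D)$ with $\delta$, and the contracting homotopy are all routine.
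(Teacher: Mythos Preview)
Your argument is correct and is precisely the classical one: build the ``bar'' resolution of $\cZ_\bfC$ by representables indexed on nerve simplices, identify $\Hom_{\catnat_\bfC}(F_\bullet,D)$ with $C^\bullet_\BW(\bfC;D)$ via Yoneda, and contract $(F_\bullet)_g$ with the extra degeneracy that inserts an identity at the source. The paper itself does not prove this proposition---it is quoted from \cite{baues85}---and the proof you give is exactly the one found there (see \cite[Theorem~4.4]{baues85}), so there is nothing to compare; your bookkeeping on the direction of $(a,b)$ in $\catfact\bfC$ and on the three clauses of $\partial$ matching the three parts of $\delta$ is accurate.
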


It is proved that, for any $n\ge 2$, $H^n_\BW(\bfC; D) = 0$ for a free $\bfC$ in $\catcat_O$ and a natural system $D$ on $\bfC$ \cite{baues85}, and also for a free $\bfC$ in $\cattheories_\varsorts$ and a cartesian natural system $D$ on $\bfC$ \cite{jp06}.
We show that the same holds for a free $\bfC$ in $\catccc_\varsorts$ and a cartesian closed natural system $D$ on $\bfC$.

\begin{proposition}\label{prop:freeccc}
For any free $S$-sorted CCC $\bfC$ and a cartesian closed natural system $D$ on $\bfC$, we have $H^n_\BW(\bfC; D) = 0$ for $n \ge 2$.
\end{proposition}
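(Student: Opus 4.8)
The plan is to mimic the strategy used in the category case \cite{baues85} and the Lawvere-theory case \cite{jp06}: exploit the freeness of $\bfC$ to build an explicit small resolution (or a contracting homotopy) witnessing the vanishing of the higher cohomology. Concretely, by Proposition \ref{prop:bwext} we have $H^n_\BW(\bfC;D) \cong \Ext^n_{\catnat_\bfC}(\cZ_\bfC, D)$, so it suffices to show that $\cZ_\bfC$ admits a projective resolution of length one in $\catnat_\bfC$ \emph{when restricted to the subcategory relevant to cartesian closed natural systems}. The key point is that a free $\varsorts$-sorted CCC $\bfC = F_{\catccc_\varsorts}\varsign$ has morphisms that are "freely generated" terms: every morphism of $\bfC$ is obtained from the generators in $\varsign$, the structural morphisms $\Id, !, \pi_i, \ev{Y}{Z}$, and the term-forming operations $\circ, \langle-,-\rangle, \lambda(-)$, modulo exactly the imposed equations. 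The derivation description in Lemma \ref{lem:der-desc} is the linchpin: a derivation $d$ on $\bfC$ is freely determined by its values $d(f)$ on the generators $f \in \varsign$, with all other values computed via the cocycle identity $d(f\circ g) = f_* d(g) + g^* d(f)$ together with $d(\iota(\pi_i)) = d(\iota(\ev{Y}{Z})) = 0$ and the forced values on $\langle-,-\rangle$ and $\lambda(-)$ coming from the cartesian and cartesian closed isomorphisms \eqref{eqn:cartesian} and \eqref{eqn:cartesianclosed}.

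First I would make precise the "freeness" statement for morphisms: every morphism of $F_{\catccc_\varsorts}\varsign$ is represented by a term built from $\varsign \cup \{\Id, !, \pi_i, \ev{}{}\}$ using $\circ, \langle-,-\rangle, \lambda$, and two such terms represent the same morphism iff they are provably equal from the CCC axioms. Then I would show that $\Der^{\catccc_\varsorts}(\bfC; D)$, as a functor of $D$, is representable — i.e., $\Der^{\catccc_\varsorts}(\bfC; -) \cong \Hom_{\catccnat_\bfC}(\Omega, -)$ for a suitable "module of Kähler differentials" $\Omega \in \catccnat_\bfC$ — which holds precisely because of the free-generation description above: $\Omega_f$ is (functorially in the factorization category) the free abelian group on symbols $\mathrm{d}(g)$ for $g$ ranging over generators appearing in a normal form of $f$, with relations dictated by the cocycle identity and the vanishing on $\pi_i, \ev{}{}$. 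This is the analogue of the cotangent complex being concentrated in degree zero for free objects in an algebraic category. Once $\Omega$ is identified as a projective object of $\catccnat_\bfC$ (being free), the standard machinery of Quillen cohomology gives $\HQ^n_{\catccc_\varsorts}(\bfC; D) = 0$ for $n \geq 2$; combined with the isomorphism $\HQ^n \cong H^{n+1}_\BW$ that is the paper's main theorem — wait, that is circular here, so instead I would argue directly inside $\catnat_\bfC$: construct a length-one resolution $0 \to \Omega' \to P_0 \to \cZ_\bfC \to 0$ of natural systems (not merely cartesian closed ones) by taking $P_0$ to be a suitable free natural system on the generators, and check exactness using the normal-form/confluence properties of the free CCC.

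The main obstacle I anticipate is handling the $\lambda$-abstraction clause: unlike $\langle-,-\rangle$, whose contribution to a derivation is forced by a finite-product isomorphism on the \emph{same} factorization, the $\lambda$ operation relates $D_{\lambda f}$ to $D_f$ across factorizations of different shape ($X \to Z^Y$ versus $X \times Y \to Z$), so the bookkeeping in the resolution must track how $\mathrm{d}(\lambda f)$ is determined by $\mathrm{d}(f)$ through the isomorphism \eqref{eqn:cartesianclosed}, and dually how $\mathrm{d}(\ev{}{})=0$ propagates. Establishing that the resulting complex of natural systems is actually exact — equivalently, that the "difference of two parse trees" is always a boundary — will require a careful induction on term structure using a confluent rewriting system for the CCC axioms (the classical Curien/Lambek normal forms for cartesian closed categories), and this is where the bulk of the work lies. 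I would set up the rewriting system, verify local confluence on the critical pairs coming from the $\beta$- and $\eta$-rules for $\lambda/\ev{}{}$ interacting with $\circ$ and $\langle-,-\rangle$, and then feed the normal forms into an explicit contracting homotopy, closely paralleling the construction in \cite{jp06} but with the extra exponential cases.
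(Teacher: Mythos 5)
There is a genuine gap, and the missing idea is the one the paper's proof actually turns on. The paper argues as follows: given any linear extension $D \to \bfE \xrightarrow{p} \bfC$ by a cartesian closed natural system $D$, Lemma \ref{lem:ccnat} equips $\bfE$ with the structure of an $S$-sorted CCC so that $p$ becomes a morphism in $\catccc_S$; since $\bfC$ is \emph{free} in $\catccc_S$, one lifts the generators through $p$ to obtain a section $s$ with $ps = \Id_\bfC$, so every linear extension splits; the classification of linear extensions by $H^2_\BW$ then gives $H^2_\BW(\bfC;D)=0$, and the $\Ext$ description of Proposition \ref{prop:bwext} handles $n \ge 2$. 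Your proposal never uses Lemma \ref{lem:ccnat}, and this is not a cosmetic omission: the cartesian-closedness of $D$ has to enter the argument somewhere, and in the paper it enters precisely there.

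This also points to why your fallback plan is likely unsound, not merely laborious. You propose to build a length-one projective resolution $0 \to \Omega' \to P_0 \to \cZ_\bfC \to 0$ \emph{in $\catnat_\bfC$} (``of natural systems, not merely cartesian closed ones''). If such a resolution existed with $\Omega'$ projective, Proposition \ref{prop:bwext} would force $H^n_\BW(\bfC;D)=0$ for $n\ge 2$ for \emph{every} natural system $D$. But the underlying category of a free $S$-sorted CCC is not a free category — it carries the relations $\pi_i\circ\langle f_1,f_2\rangle = f_i$, $\ev{Y}{Z}\circ(\lambda f\times\Id)=f$, etc. — so one expects non-split linear extensions by non-cartesian-closed $D$, i.e.\ $\cZ_\bfC$ does not have projective dimension $\le 1$ in $\catnat_\bfC$. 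Your hedge ``restricted to the subcategory relevant to cartesian closed natural systems'' does not repair this, because the $\Ext$ in Proposition \ref{prop:bwext} is computed in $\catnat_\bfC$, and projectivity in $\catccnat_\bfC$ says nothing about it. Finally, even setting that aside, the rewriting-theoretic input you defer to (a confluent system for the CCC axioms with surjective pairing, terminal object, and $\eta$, feeding into a contracting homotopy) is a famously delicate problem rather than a routine verification, whereas the splitting argument avoids normal forms entirely.
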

\begin{proof}
For any linear extension $D \to \bfE \xrightarrow{p} \bfC$, by Lemma \ref{lem:ccnat}, we can equip $\bfE$ with a structure of an $S$-sorted CCC.
Since $\bfC$ is a free $S$-sorted CCC, we can construct a morphism $s : \bfC \to \mathbf{E}$ in $\catccc_S$ such that $ps = \Id_\bfC$.
Therefore the extension splits, and by Proposition \ref{prop:bwext}, we have $H^n_\BW(\bfC;D) = 0$ for any $n \ge 2$.
\end{proof}

\section{Equivalence}

In \cite[Theorem 4, page 4.2]{quillen2006homotopical}, Quillen showed that any algebraic category $\cC$ has a simplicial model structure.
One of the important fact we use here is that, for any $X \in \Ob(\cC)$, we can take a cofibrant replacement $Y_\bullet \to X$ and $Y_\bullet$ is degreewise free.
We call such $Y_\bullet \to X$ a simplicial free resolution of $X$.
In this paper we take $\cC = \catccc_\varsorts$.

Note that if $p : \bfC' \to \bfC$ is an $\varsorts$-sorted CCC over $\bfC$, then $p$ induces a morphism $\catfact{\bfC'} \to \catfact{\bfC}$, so any natural system $D$ on $\bfC$ can be considered as a natural system on $\bfC'$.

\begin{definition}
Let $\bfC$ be an $\varsorts$-sorted CCC and $D$ be a cartesian closed natural system on $\bfC$.
Then the $n$-th \emph{Quillen cohomology group} of $\bfC$ with coefficients in $D$, written $HQ^n_{\catccc_\varsorts}(\bfC;D)$, is given as
\[
\HQ^n_{\catccc_\varsorts}(\bfC;D) = H^n(\Der^{\catccc_\varsorts}(\bfF_\bullet; D))
\]
where $\bfF_\bullet$ is a simplicial free resolution of $\bfC$ in $\catccc_\varsorts$.
\end{definition}

The goal of this paper is to show
\[
\HQ^n_{\catccc_\varsorts}(\bfC;D)
\cong H^{n+1}_\BW(\bfC;D)
\]
for any $n \ge 1$.

Let $\bfC$ be an $\varsorts$-sorted CCC and $D$ be a cartesian closed natural system on $\bfC$.
Let $p : \bfF \to \bfC$ be an $\varsorts$-sorted CCC over $\bfC$ and suppose that $\bfF$ is freely generated by $\{f_i\}_{i\in I}$.

We define $\tilde C^0_\BW(\bfF; D)$ as the subgroup of $\ker(\delta : C^1_\BW(\bfF;D) \to C^2_\BW(\bfF;D))$ consisting of $\phi$ such that $\phi(f_i) = 0$ for $i\in I$.
Note that any $\phi \in \ker(\delta : C^1_\BW(\bfF;D) \to C^2_\BW(\bfF;D))$ satisfies $\phi(a\circ b) = a_* \phi(b) + b^* \phi(a)$, so $\phi(a\circ b)$ is determined by $\phi(a)$ and $\phi(b)$.

For any $g_i : X \to X_i$ ($i=1,2$) in $\bfF$, since $\phi(g_i) = \phi(\pi_i \circ \ab<g_1,g_2>) = \pi_{i*}\phi(\ab<g_1,g_2>) + \ab<g_1,g_2>^*\phi(\pi_i)$ and $\xi \mapsto (\pi_{1*}\xi,\pi_{2*}\xi)$ is an isomorphism,
$\phi(\ab<g_1,g_2>)$ is determined by $\phi(g_i)$, $\phi(\pi_i)$ for $i=1,2$.
Similarly, for any $g : X \times Y \to Z$, since $\phi(g) = \phi(\ev{Y}{Z}\circ (\lambda g \times \Id_Y)) = \ev{Y}{Z*}\phi(\lambda g\times \Id_Y) + (\lambda g \times \Id_Y)^* \phi(\ev{Y}{Z})$,
$\phi(\lambda g)$ is determined by $\phi(g)$, $\phi(\ev{Y}{Z})$.
So, $\phi \in \ker(\delta : C^1_\BW(\bfF;D) \to C^2_\BW(\bfF;D))$ is uniquely determined by $\phi(f_i)$, $\phi(\pi_i)$, $\phi(\ev{Y}{Z})$, and $\phi \in \tilde C_\BW^0(\bfF;D)$ is uniquely determined by $\phi(\pi_i)$, $\phi(\ev{Y}{Z})$.
(In particular, $\tilde C^0_\BW(\bfF;D)$ is the same group for any free $\varsorts$-sorted CCC $\bfF$.)
Also, by Lemma \ref{lem:der-desc}, $\phi \in \Der^{\catccc_\varsorts}(\bfF;D)$ is determinded by $\phi(f_i)$ ($i\in I$), and from these observation, we get the following.

\begin{lemma}
$\tilde C_\BW^0(\bfF; D)\oplus \Der^{\catccc_\varsorts}(\bfF; D)\cong \ker(\delta : C^1_\BW(\bfF;D) \to C^2_\BW(\bfF;D))$.
\end{lemma}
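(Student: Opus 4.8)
The plan is to realize $\tilde C^0_\BW(\bfF;D)$ and $\Der^{\catccc_\varsorts}(\bfF;D)$ as complementary subgroups of $Z := \ker(\delta\colon C^1_\BW(\bfF;D)\to C^2_\BW(\bfF;D))$; then $Z = \tilde C^0_\BW(\bfF;D)\oplus\Der^{\catccc_\varsorts}(\bfF;D)$ as an internal direct sum, which gives the asserted isomorphism (via $(\psi,d)\mapsto\psi+d$). Both groups sit inside $Z$: by construction $\tilde C^0_\BW(\bfF;D)\subseteq Z$, and by Lemma~\ref{lem:der-desc} a derivation is a family $(d(f))_{f\in\Mor(\bfF)}$ with $d(f)\in D_f$ obeying $d(f\circ g)=f_*d(g)+g^*d(f)$ and vanishing on the specified projections and evaluations; unwinding the formula for $\delta$ shows the cocycle identity $\delta d=0$ is exactly that composition law, so $\Der^{\catccc_\varsorts}(\bfF;D)\subseteq Z$ as well.

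For triviality of the intersection, I would take $\phi\in\tilde C^0_\BW(\bfF;D)\cap\Der^{\catccc_\varsorts}(\bfF;D)$. Then $\phi(f_i)=0$ for every generator $f_i$ (since $\phi\in\tilde C^0_\BW(\bfF;D)$) and $\phi(\pi_i)=\phi(\ev{Y}{Z})=0$ (since $\phi$ is a derivation). By the determination argument recorded just before the statement — a cocycle in $Z$ is uniquely pinned down by its values on the generators $f_i$, the projections $\pi_i$, and the evaluation maps $\ev{Y}{Z}$, using that $D$ is cartesian and cartesian closed together with the inductive structure of the free $\varsorts$-sorted CCC $\bfF$ — we conclude $\phi=0$.

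For the spanning part I would use freeness of $\bfF$ to split off the generator data of an arbitrary $\phi\in Z$ into a derivation. By Lemma~\ref{lem:ccnat} the trivial linear extension $D\rtimes\bfF$ is a CCC carrying an $\varsorts$-sorted structure for which the projection $q\colon D\rtimes\bfF\to\bfF$ is a morphism in $\catccc_\varsorts$. Since $\bfF$ is the free $\varsorts$-sorted CCC on $\{f_i\}_{i\in I}$, there is a unique morphism $s\colon\bfF\to D\rtimes\bfF$ in $\catccc_\varsorts$ with $s(f_i)=(\phi(f_i),f_i)$ for each $i$ (legitimate since $\phi(f_i)\in D_{f_i}$). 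Then $q\circ s\colon\bfF\to\bfF$ is a morphism in $\catccc_\varsorts$ fixing every generator, hence equals $\Id_\bfF$ by the uniqueness clause of the universal property; so $s$ is a section, i.e.\ $s\in\Der^{\catccc_\varsorts}(\bfF;D)$, with $s(f_i)=\phi(f_i)$ for all $i$. Setting $\psi:=\phi-s\in Z$ gives $\psi(f_i)=0$ for all $i$, so $\psi\in\tilde C^0_\BW(\bfF;D)$ and $\phi=\psi+s$. Combining the two steps yields $Z=\tilde C^0_\BW(\bfF;D)\oplus\Der^{\catccc_\varsorts}(\bfF;D)$.

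The step I expect to carry the weight is the spanning argument, and within it the point that the freely-chosen map $s$ really is a section of $q$: this is where the universal property of the free $\varsorts$-sorted CCC is used twice — once to build $s$ from its values on generators, and once (via uniqueness) to recognize $q\circ s$ as $\Id_\bfF$. Everything else is bookkeeping around Lemma~\ref{lem:der-desc}, Lemma~\ref{lem:ccnat}, and the determination statement for cocycles already established above.
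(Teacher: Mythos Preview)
Your proof is correct and follows the same approach as the paper. The paper's argument is the paragraph immediately preceding the lemma: it records that a cocycle $\phi\in Z$ is determined by $\phi(f_i)$, $\phi(\pi_i)$, $\phi(\ev{Y}{Z})$; that $\tilde C^0_\BW$ is the subgroup where the $\phi(f_i)$ vanish; and that $\Der^{\catccc_\varsorts}$ is the subgroup where the $\phi(\pi_i)$ and $\phi(\ev{Y}{Z})$ vanish --- then simply asserts the direct sum. Your write-up makes the spanning step explicit (building the derivation via the universal property of the free $\varsorts$-sorted CCC, then checking $q\circ s=\Id_\bfF$ by uniqueness), which the paper leaves to the reader; but the underlying decomposition is the same.
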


For $n > 0$, let $\tilde C^n_\BW(\bfF; D) = C^n_\BW(\bfF;D)$. Then $\tilde C^\bullet_\BW(\bfF; D)$ forms a cochain complex.
Then, we get $H^1\ab(\tilde C_\BW^\bullet(\bfF; D)) \cong \Der^{\catccc_\varsorts}(\bfF;D)$ and $H^n\ab(\tilde C_\BW^\bullet(\bfF;D)) =0 $ for $n \neq 1$.

\begin{remark}
Even though the proof of $\HQ^n_{\catccc_\varsorts} \cong H^{n+1}_\BW$ we are going to give is quite similar to that for Lawvere theories in \cite{jp06}, they claimed and used an incorrect proposition $\Der^{\cattheories_\varsorts}(\bfF;D) \cong \ker(\delta : C^1_\BW(\bfF;D) \to C^0_\BW(\bfF; D))$, which makes their proof invalid.
Our discussion above corrects their mistake.
\end{remark}

\begin{theorem}
For $n > 0$, an $\varsorts$-sorted cartesian closed category $\bfC$, and a cartesian closed natural system $D$ on $\bfC$,
\[
\HQ^n_{\catccc_\varsorts}(\bfC;D) \cong H^{n+1}_\BW(\bfC;D).
\]
\end{theorem}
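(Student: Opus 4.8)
The plan is to carry out the Dwyer--Kan double complex argument in the present setting, using the gadget $\tilde C^\bullet_\BW$ introduced above in place of the naive Baues--Wirsching complex. Fix a simplicial free resolution $\bfF_\bullet \to \bfC$ in $\catccc_\varsorts$, so that each $\bfF_q$ is a free $\varsorts$-sorted CCC and the face and degeneracy maps are morphisms of $\varsorts$-sorted CCCs. For each $q$ the complex $\tilde C^\bullet_\BW(\bfF_q; D)$ has cohomology $\Der^{\catccc_\varsorts}(\bfF_q; D)$ concentrated in degree $1$, by the lemmas above together with Proposition~\ref{prop:freeccc} (which kills $H^{\ge 2}_\BW$ of the free $\bfF_q$). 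I would organize these, as $q$ varies, into a first-quadrant double complex $K^{p,q} = \tilde C^p_\BW(\bfF_q; D)$ — horizontal differential the $\tilde C_\BW$-differential, vertical differential the alternating sum of the cofaces induced by $\bfF_\bullet$ — and compute $H^n$ of its total complex $T^n = \bigoplus_{p+q=n} K^{p,q}$ by its two spectral sequences, reading off the theorem by comparison. The degree-$0$ row has to be handled through the intrinsic description of $\tilde C^0_\BW$ (systems of values on the specified projections and evaluations) rather than its description as a subgroup of $1$-cocycles; this is exactly the point isolated in the discussion before the theorem, and it is what makes the cofaces act on $K^{0,\bullet}$.

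The first spectral sequence takes horizontal cohomology first. Since $H^p(\tilde C^\bullet_\BW(\bfF_q; D))$ is $\Der^{\catccc_\varsorts}(\bfF_q; D)$ for $p=1$ and $0$ otherwise, its $E_1$-page is the cosimplicial abelian group $q \mapsto \Der^{\catccc_\varsorts}(\bfF_q; D)$ sitting in the single row $p=1$. Hence $E_2^{1,q} = H^q\big(\Der^{\catccc_\varsorts}(\bfF_\bullet; D)\big) = \HQ^q_{\catccc_\varsorts}(\bfC; D)$, and, the page being concentrated in one row, the sequence degenerates, giving $H^n(T) \cong \HQ^{n-1}_{\catccc_\varsorts}(\bfC; D)$.

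The second spectral sequence takes vertical cohomology first, and this is the technical heart. I claim that for each fixed $p$ the cosimplicial abelian group $q \mapsto \tilde C^p_\BW(\bfF_q; D)$ is \emph{aspherical}: its cohomology is $C^p_\BW(\bfC;D)$ for $p \ge 1$, resp.\ the constant group $\tilde C^0_\BW$ for $p=0$, in degree $0$, and $0$ in positive degrees. For $p=0$ this is because the cosimplicial group is in fact constant (cofaces preserve the specified projections and evaluations). For $p \ge 1$ I would write
\[
C^p_\BW(\bfF_q; D) \;=\; \prod_{\vec\mu \in N_p(\bfC)} \mathrm{Map}\big(N_p(\bfF_q)_{\vec\mu},\; D_{\mu_1\cdots\mu_p}\big),
\]
where $N_p(\bfF_q)_{\vec\mu}$ is the fibre of $N_p(\bfF_q) \to N_p(\bfC)$ over $\vec\mu$ (legitimate since $D$, being pulled back from $\bfC$, assigns to a $p$-simplex of $\bfF_q$ a group depending only on its image in $N_p(\bfC)$); as cohomology commutes with products it then suffices to show each simplicial set $N_p(\bfF_\bullet)_{\vec\mu}$ is contractible. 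This follows because $\bfF_\bullet \to \bfC$, being a cofibrant replacement in Quillen's model structure on simplicial objects in $\catccc_\varsorts$, is a weak equivalence, hence — weak equivalences in that model structure being detected on underlying objects — a levelwise weak equivalence on hom-simplicial-sets; therefore $N_p(\bfF_\bullet) = \coprod_{\vec X}\prod_i \Hom_{\bfF_\bullet}(X_{i-1},X_i) \to N_p(\bfC)$ is a weak equivalence of simplicial sets onto a discrete target, and its fibres, which are its connected components, are weakly contractible. Granting the claim, the $E_1$-page is concentrated in the column $q=0$, where it is the complex $\tilde C^\bullet_\BW(\bfC; D)$ with $d_1$ the Baues--Wirsching coboundary in positive degrees (by naturality of the nerve and of the natural-system structure); in degrees $n \ge 2$ this agrees with the Baues--Wirsching complex, so $E_2^{n,0} = H^n_\BW(\bfC; D)$ for $n \ge 2$, and degeneration gives $H^n(T) \cong H^n_\BW(\bfC; D)$ for $n \ge 2$.

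Comparing the two computations yields $\HQ^{n-1}_{\catccc_\varsorts}(\bfC; D) \cong H^n_\BW(\bfC; D)$ for $n \ge 2$, i.e.\ the asserted isomorphism for all $n > 0$. I expect the asphericity statement in the second spectral sequence to be the main obstacle: it is where all the homotopical input is used (Quillen's model structure on simplicial objects of an algebraic category and the detection of its weak equivalences on underlying hom-simplicial-sets), and it requires care with the twisted coefficients, the point being that $D$ only sees $\pi_0 N_p(\bfF_\bullet) = N_p(\bfC)$, so it contributes nothing to the higher cohomology of the contractible fibres. The bookkeeping around the degree-$0$ term $\tilde C^0_\BW$ — making the double complex well defined and pinning down its contribution to the bottom of the second spectral sequence — is a secondary point, and it is the one already addressed in the discussion preceding the statement.
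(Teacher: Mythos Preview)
Your proposal is correct and follows essentially the same argument as the paper: form the double complex $\tilde C^{\bullet}_\BW(\bfF_\bullet;D)$, run the two spectral sequences, use the lemma $H^1(\tilde C^\bullet_\BW(\bfF_q;D))\cong \Der^{\catccc_\varsorts}(\bfF_q;D)$ together with Proposition~\ref{prop:freeccc} for the first, and the weak equivalence $\Hom_{\bfF_\bullet}(X,Y)\to \Hom_\bfC(X,Y)$ (hence contractibility of the fibres of $N_p(\bfF_\bullet)\to N_p(\bfC)$) for the second. Your treatment of the $p=0$ column via constancy of $\tilde C^0_\BW(\bfF_\bullet;D)$ and your explicit observation that the $E_1$-column agrees with the Baues--Wirsching complex only from degree $2$ onward are exactly the points the paper uses, and if anything you spell out the bookkeeping there a bit more carefully than the paper does.
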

\begin{proof}
Let $\epsilon : \bfF_\bullet \to \bfC$ be a simplicial free resolution in $\catccc_I/\bfC$.
For each two objects $X,Y$, the simplicial object $\bfF_\bullet$ induces the simplicial set $\Hom_{\bfF_\bullet}(X,Y)$ given by
$\Hom_{\bfF_\bullet}(X,Y)_k = \Hom_{\bfF_k}(X,Y)$,
and $\epsilon$ induces a weak equivalence from $\Hom_{\bfF_\bullet}(X,Y)$ to $\Hom_\bfT(X,Y)$ considered as a constant simplicial set.

Consider the double complex $\tilde C^{\bullet}_\BW(\bfF_\bullet; D)$ and two spectral sequences $\fstspec^{pq}$, $\sndspec^{pq}$ converging to the cohomology of the total complex.
For $\fstspec^{pq}_1$, we have
\[
\fstspec_1^{pq} = H^q \ab(\tilde C^{\bullet}_\BW(\bfF_p; D)) =
\begin{cases}
\Der^{\catccc_\varsorts}(\bfF_p;D), & q = 1,\\
0, & q \neq 1,
\end{cases}
\]
so, $\fstspec_2^{pq} \Rightarrow \HQ^{p+q-1}(\bfC; D)$.

For $\sndspec_1^{pq}$, we have
\[
\sndspec_1^{pq} = H^q\ab(\tilde C^{p}_\BW(\bfF_\bullet; D)).
\]
For $Y_0,\dots,Y_p \in\Ob(\bfC)=\Ob(\bfF_q)$, consider the simplicial set
\[
S_\bullet^{Y_0,\dots,Y_p} = \Hom_{\bfF_\bullet}(Y_1,Y_0)\times \dots \times \Hom_{\bfF_\bullet}(Y_{p},Y_{p-1}).
\]
By the definition of Baues--Wirsching cochain complexes, for any $p > 0$,
\[
C^{p}_{\BW}(\bfF_q; D) \cong \prod_{Y_0,\dots,Y_{p}} C^q\ab(S_\bullet^{Y_0,\dots,Y_p}; D_{(-)})
\]
where the right-hand side is the product of cochain complexes of simplicial sets $S_\bullet^{Y_0,\dots,Y_p}$ with coefficients in $D_{f_1\dots f_{p}}$ on the connected component of $S_\bullet^{Y_0,\dots,Y_p}$ corresponding to $(f_1,\dots,f_{p}) \in \Hom_\bfC(Y_1,Y_0)\times \dots \times \Hom_\bfC(Y_{p},Y_{p-1})$.

We have $H^n\ab(S_\bullet^{Y_0,\dots,Y_p}; D_{(-)}) = 0$ for $n > 0$ since we have a weak equivalence between $\bfF_\bullet$ and $\bfC$, a constant simplicial object.
Therefore, $\sndspec_1^{pq} = 0$ for $p,q > 0$.
For $p > 0$ and $q=0$, we have
\[
\sndspec_1^{p0} = \prod_{Y_0,\dots,Y_{p+1}} H^0\ab(\Hom_{\bfF_\bullet}(Y_1,Y_0)\times \dots \times \Hom_{\bfF_\bullet}(Y_{p},Y_{p-1}); D_{(-)}) = C_\BW^{p}(\bfC;D).
\]
For $p=0$, $\sndspec_1^{0q} = 0$ since $\tilde C^0_\BW(\bfF_i;D) \to \tilde C^0_\BW(\bfF_{i-1};D)$ is an isomorphism.
Thus $\sndspec_2^{pq} \Rightarrow H_\BW^{p}(\bfC; D)$.
\end{proof}

Note that if we have an $\varsorts$-sorted CCC $\bfC$, an $\varsorts'$-sorted CCC $\bfC'$, a cartesian closed natural system $D$ on $\bfC'$, and an equivalence $\phi : \bfC \xrightarrow{\sim} \bfC'$ of categories, then, by Proposition \ref{prop:bw-invariant} and the above theorem, we have
\[
\HQ^n_{\catccc_\varsorts}(\bfC; \phi^*D) \cong \HQ^n_{\catccc_{\varsorts'}}(\bfC'; D).
\]
In other words, $\HQ^n(\bfC;D)$ does not depend on the choice of sortings of $\bfC$.

\section{Open problem}

In \cite{jp06}, Jibladze and Pirashvili showed that the Quillen and Baues--Wirsching cohomologies of an $\varsorts$-sorted Lawvere theory $\bfC$ with coefficients in a cartesian natural system $D$ is also isomorphic to
\(
\Ext^n_{\catcnat_\bfC}(\Omega_\bfC^{\cattheories_\varsorts},D).
\)
Does this extend to $\varsorts$-sorted CCCs?
That is, is there an isomorphism
\[
\HQ^n_{\catccc_\varsorts}(\bfC;D) \cong \Ext^n_{\catccnat_\bfC}\ab(\Omega_\bfC^{\catccc_\varsorts},D)
\]
for any $\varsorts$-sorted CCC $\bfC$ and a cartesian closed natural system $D$ on $\bfC$?

Here, since Quillen showed in \cite{quillen1970co} that there is a spectral sequence
\[
E^{pq}_2 = \Ext^p_{(\cC/X)_\abobjs}\ab(\HQ_q^{\cC}(X),M) \Rightarrow \HQ^{p+q}_{\cC}(X;M)
\]
for any object $X$ of an algebraic category $\cC$ and a Beck module $M$ over $X$, it is the same to ask whether $\HQ^{\catccc_\varsorts}_q(\bfC) = 0$ for any $q > 0$.

\bibliographystyle{elsarticle-num}
\bibliography{jpaa}

\begin{thebibliography}{10}
\expandafter\ifx\csname url\endcsname\relax
  \def\url#1{\texttt{#1}}\fi
\expandafter\ifx\csname urlprefix\endcsname\relax\def\urlprefix{URL }\fi
\expandafter\ifx\csname href\endcsname\relax
  \def\href#1#2{#2} \def\path#1{#1}\fi

\bibitem{lawvere63}
F.~W. Lawvere, Functorial semantics of algebraic theories, Ph.D. thesis, Columbia University (1963).

\bibitem{lambek88}
J.~Lambek, P.~J. Scott, Introduction to higher-order categorical logic, Vol.~7, Cambridge University Press, 1988.

\bibitem{crole93}
R.~L. Crole, Categories for types, Cambridge University Press, 1993.

\bibitem{mm16}
P.~Malbos, S.~Mimram, {Homological computations for term rewriting systems}, in: 1st International Conference on Formal Structures for Computation and Deduction (FSCD 2016), Vol.~52 of Leibniz International Proceedings in Informatics (LIPIcs), Schloss Dagstuhl--Leibniz-Zentrum fuer Informatik, Dagstuhl, Germany, 2016, pp. 27:1--27:17.

\bibitem{ikebuchi22}
M.~Ikebuchi, \href{https://doi.org/10.46298/lmcs-18(3:36)2022}{A lower bound of the number of rewrite rules obtained by homological methods}, Log. Methods Comput. Sci. 18~(3) (2022).
\newblock \href {https://doi.org/10.46298/LMCS-18(3:36)2022} {\path{doi:10.46298/LMCS-18(3:36)2022}}.
\newline\urlprefix\url{https://doi.org/10.46298/lmcs-18(3:36)2022}

\bibitem{ikebuchi21}
M.~Ikebuchi, \href{https://drops.dagstuhl.de/entities/document/10.4230/LIPIcs.MFCS.2021.61}{{A Homological Condition on Equational Unifiability}}, in: F.~Bonchi, S.~J. Puglisi (Eds.), 46th International Symposium on Mathematical Foundations of Computer Science (MFCS 2021), Vol. 202 of Leibniz International Proceedings in Informatics (LIPIcs), Schloss Dagstuhl -- Leibniz-Zentrum f{\"u}r Informatik, Dagstuhl, Germany, 2021, pp. 61:1--61:16.
\newblock \href {https://doi.org/10.4230/LIPIcs.MFCS.2021.61} {\path{doi:10.4230/LIPIcs.MFCS.2021.61}}.
\newline\urlprefix\url{https://drops.dagstuhl.de/entities/document/10.4230/LIPIcs.MFCS.2021.61}

\bibitem{i25}
M.~Ikebuchi, {Homological Invariants of Higher-Order Equational Theories }, in: LICS '25: Proceedings of the 40th Annual ACM/IEEE Symposium on Logic in Computer Science (to appear), 2025.

\bibitem{baues85}
H.-J. Baues, G.~Wirsching, \href{https://www.sciencedirect.com/science/article/pii/0022404985900088}{Cohomology of small categories}, Journal of Pure and Applied Algebra 38~(2) (1985) 187--211.
\newblock \href {https://doi.org/https://doi.org/10.1016/0022-4049(85)90008-8} {\path{doi:https://doi.org/10.1016/0022-4049(85)90008-8}}.
\newline\urlprefix\url{https://www.sciencedirect.com/science/article/pii/0022404985900088}

\bibitem{m72}
B.~Mitchell, Rings with several objects, Advances in Mathematics 8~(1) (1972) 1 -- 161.

\bibitem{quillen2006homotopical}
D.~Quillen, Homotopical Algebra, Springer, 1967.

\bibitem{dk88}
W.~Dwyer, D.~M. Kan, {Hochschild-Mitchell cohomology of simplicial categories and the cohomology of simplicial diagrams of simplicial sets}, Nederl. Akad. Wetensch. Indag. Math. 50~(2) (1988) 111--120.

\bibitem{jp06}
M.~Jibladze, T.~Pirashvili, Quillen cohomology and {Baues-Wirsching} cohomology of algebraic theories, Cahiers de Topologie et G\'eom\'etrie Diff\'erentielle Cat\'egoriques 47~(3) (2006) 163--205.

\bibitem{cohn2012universal}
P.~Cohn, Universal Algebra, Mathematics and Its Applications, Springer Netherlands, 2012.

\bibitem{Adamek2010}
J.~Ad^^c3^^a1mek, J.~Rosick^^c3^^bd, E.~M. Vitale, Algebraic Theories: A Categorical Introduction to General Algebra, Cambridge Tracts in Mathematics, Cambridge University Press, 2010.

\bibitem{jp05}
M.~Jibladze, T.~Pirashvili, Linear extensions and nilpotence of maltsev theories, Beitr. Algebra Geom. 46~(1) (2005) 71--102.

\bibitem{jp91}
M.~Jibladze, T.~Pirashvili, Cohomology of algebraic theories, Journal of Algebra 137~(2) (1991) 253--296.

\bibitem{quillen1970co}
D.~Quillen, On the (co-) homology of commutative rings, in: Proc. Symp. Pure Math, Vol.~17, 1970, pp. 65--87.

\end{thebibliography}
\end{document}